\newtheorem{theorem}{Theorem}[section]
\newtheorem{proposition}[theorem]{Proposition}
\newtheorem{lemma}[theorem]{Lemma}
\newtheorem{corollary}[theorem]{Corollary}
\newtheorem{conjecture}[theorem]{Conjecture}
\theoremstyle{definition}
\newtheorem{example}[theorem]{Example}
\newtheorem{remark}[theorem]{Remark}
\newtheorem{definition}[theorem]{Definition}
\mathchardef\za="710B  
\mathchardef\zb="710C  
\mathchardef\zg="710D  
\mathchardef\zd="710E  
\mathchardef\zve="710F 
\mathchardef\zz="7110  
\mathchardef\zh="7111  
\mathchardef\zvy="7112 
\mathchardef\zi="7113  
\mathchardef\zk="7114  
\mathchardef\zl="7115  
\mathchardef\zm="7116  
\mathchardef\zn="7117  
\mathchardef\zx="7118  
\mathchardef\zp="7119  
\mathchardef\zr="711A  
\mathchardef\zs="711B  
\mathchardef\zt="711C  
\mathchardef\zu="711D  
\mathchardef\zvf="711E 
\mathchardef\zq="711F  
\mathchardef\zc="7120  
\mathchardef\zw="7121  
\mathchardef\ze="7122  
\mathchardef\zy="7123  
\mathchardef\zf="7124  
\mathchardef\zvr="7125 
\mathchardef\zvs="7126 
\mathchardef\zf="7127  
\mathchardef\zG="7000  
\mathchardef\zD="7001  
\mathchardef\zY="7002  
\mathchardef\zL="7003  
\mathchardef\zX="7004  
\mathchardef\zP="7005  
\mathchardef\zS="7006  
\mathchardef\zU="7007  
\mathchardef\zF="7008  
\mathchardef\zW="700A  
\newcommand{\be}{\begin{equation}}
\newcommand{\ee}{\end{equation}}
\newcommand{\bea}{\begin{eqnarray}}
\newcommand{\eea}{\end{eqnarray}}
\newcommand{\beas}{\begin{eqnarray*}}
\newcommand{\eeas}{\end{eqnarray*}}
\def\*{{\textstyle *}}
\newcommand{\nn}{\nonumber}
\newcommand{\pa}{\partial}
\newcommand{\ti}{\times}
\newcommand{\Ll}{{\pounds}}
\def\ran{\rangle}
\def\End{\mathsf{End}}
\def\cA{{\cal A}}
\def\cE{{\cal E}}
\def\wt{\widetilde}
\def\sT{{\mathsf T}}
\def\sV{{\mathsf V}}
\def\xd{\mathrm{d}}
\def\dt{\xd_{\mathsf T}}
\def\dts{\xd_{{\mathsf T}^*}}
\def\cM{{\cal M}}
\newcommand{\g}{\mathfrak{g}}
\newcommand{\la}{\langle}
\newcommand{\N}{\mathbb{N}}
\newcommand{\Z}{\mathbb{Z}}
\newcommand{\R}{\mathbb{R}}
\newcommand{\C}{\mathbb{C}}
\newcommand{\n}{\nabla}
\def\deg{\mathsf{deg}}
\newcommand{\id}{\on{id}}
\newcommand{\catname}[1]{\textnormal{\texttt{#1}}}
\newcommand{\rmd}{\textnormal{d}}
\newcommand{\pr}{\textnormal{pr}}
\newcommand{\mn}{{\medskip\noindent}}
\newcommand{\no}{{\noindent}}
\def\on{\operatorname}
\def\op{\oplus}
\def\g{\mathfrak{g}}
\newcommand{\w}{{\mathsf w}}
\def\inv{\on{inv}}
\DeclareMathOperator{\GL}{GL}
\DeclareMathOperator{\SL}{SL}
\DeclareMathOperator{\Ker}{Ker}
\DeclareMathOperator{\Exp}{Exp}
\newcommand{\we}{\wedge}
\newcommand{\ber}{\operatorname{Ber}}
\def\fX{\mathfrak{X}}
\def\tM{{\wt M}}
\begin{document}
\title{\bf Graded supermanifolds and homogeneity\footnote{Research funded by the National Science Centre (Poland) within the project WEAVE-UNISONO, No. 2023/05/Y/ST1/00043.}}
\date{}
\author{\\ Katarzyna  Grabowska$^1$\\ Janusz Grabowski$^2$
        \\ \\
         $^1$ {\it Faculty of Physics,\
                University of Warsaw}\\
                $^2$ {\it Institute of Mathematics,\
                Polish Academy of Sciences}
                }
\maketitle

\begin{abstract}
We introduce the concept of a \emph{homogeneity supermanifold}, which is, roughly speaking, a supermanifold equipped with a privileged atlas whose coordinates carry prescribed (real) homogeneity degrees. This structure defines a sheaf of graded algebras on the supermanifold, regarded as an additional geometric structure. The guiding principle of this approach is that \textbf{grading is ultimately related to homogeneity}. Assigning homogeneity degrees to coordinates in a consistent way is equivalent to fixing a global vector field, the \emph{weight vector field}. This approach is simple and substantially more general than most existing approaches to graded manifolds. In particular, the homogeneity degrees may be arbitrary real numbers, and the resulting category includes compact supermanifolds.

We systematically study homogeneity submanifolds, homogeneity Lie supergroups, tangent and cotangent lifts of homogeneity structures, homogeneous distributions and codistributions, as well as related notions such as double homogeneity. The main achievements of this framework include proofs of the homogeneous Poincar\'e Lemma, the homogeneous Frobenius Theorem, and the homogeneous symplectic Darboux Theorem, results that are of independent interest even in the purely even case.

\medskip\noindent
{\bf Keywords:} supermanifold; graded manifold; homogeneity; Poincar\'e Lemma; symplectic form; Darboux theorem.

\medskip\noindent
{\bf MSC 2020:} 58A50; 53D05; 53D10; 57N16 (Primary); 14A22; 17A30; 58A05 (Secondary).
\end{abstract}

\section{Introduction}
The theory of graded manifolds is a crucial mathematical tool in the BV and AKSZ formalisms of quantum field theory. Elements of supergeometry also appear in supersymmetric field theory, BRST theory, and supergravity.

\medskip
Many graded objects in the geometric literature are based on the assumption that a privileged family of (local) functions (sections of a sheaf, elements of a vector space, etc.) is assigned \emph{weights} (or \emph{degrees}), which are usually integers and, in supergeometry, are additionally equipped with a $\Z_2$-grading. To ensure that this family is sufficiently rich and that the corresponding grading is well defined, it should contain some local coordinates, and the weights should be preserved by transition maps of a prescribed class. This point is often left implicit in the literature, but it will be made precise in our approach.

\medskip\noindent
In the 1970s, \emph{supermanifolds} were introduced to provide a geometric framework for supersymmetry arising in physics. Berezin was the first to observe that, in analogy with classical algebra, analysis, and geometry, one can develop a mathematically rigorous theory of functions depending on anticommuting variables and, consequently, a theory of manifolds with both commuting and anticommuting coordinates. These ideas have found numerous applications in physics, particularly in quantum field theory. Following Berezin, the so-called Russian school of supergeometry developed the subject extensively. In the approaches of Berezin \cite{Berezin:1987}, Leites \cite{Leites:1980}, and Kostant \cite{Kostant:1977}, supermanifolds are defined as $\Z_2$-graded commutative locally ringed spaces (see also the books by Rogers \cite{Rogers:2007}, Tuynman \cite{Tuynman:1983}, and the supersymmetry-motivated book \cite{Carmeli:2011}).

Supermanifolds are often referred to simply as \emph{graded manifolds}. In this paper, however, we consistently use the term \emph{supermanifold}, reserving \emph{graded manifold} for manifolds equipped with gradings beyond the $\Z_2$-grading, such as $\Z$-graded manifolds. There is also a rapidly developing theory of $\Z_2^n$-graded supergeometry, see \cite{Bruce:2021,Bruce:2025,Bruce:2021a,Covolo:2016a,Covolo:2016b}, which generalizes the standard $\Z_2$-grading and finds applications in quantum field theory \cite{Aizawa:2024,Bruce:2020}.

\medskip\noindent
From the late 1990s onward, the introduction of a $\Z$-grading became essential in various contexts related to Poisson geometry, Lie algebroids, and Courant algebroids. Such structures appeared, for instance, in the work of Kontsevich \cite{Kontsevich:2003}, Roytenberg \cite{Roytenberg:2002}, and {\v{S}}evera \cite{Severa:2005}, who introduced the notion of an \emph{N-manifold}, i.e.\ a $\Z$-graded (in practice $\N$-graded) supermanifold whose parity is induced by the $\Z$-grading. Since then, $\Z$-graded manifolds have appeared in numerous works, for example \cite{Fairon:2017,Mehta:2006,Salnikov:2021,Kotov:2024,Vysoky:2022,Vysoky:2022a,Vysoky:2024}.

In many situations, the $\Z$-grading can be replaced by gradings indexed by elements of other commutative groups or even monoids \cite{Jiang:2023}, as was done for $\Z_2^n$-manifolds and, in our case, for the additive group $\R$. Our homogeneity supermanifolds are, in this sense, $\Z_2\times\R$-graded: the $\Z_2$-component encodes parity, while the $\R$-component encodes weight. These two gradings are compatible in the sense that they commute, forming a $\Z_2\times\R$-grading. Voronov originally considered general gradings not linked to parity \cite{Voronov:2019,Voronov:2002}, but later focused on $\Z$-gradings under additional assumptions, such as polynomial transition functions and the cylindrical nature of even coordinates of non-zero degree.

\medskip\noindent
Almost all recent papers on graded manifolds adopt the language of ringed spaces and sheaves of graded algebras, as developed in algebraic geometry. Differential geometry, however, originated as a framework for differential calculus on curved spaces, a feature that is largely absent from algebraic geometry. As a result, one often encounters ``graded manifolds'' whose graded algebras of local or global functions are too small to support a satisfactory differential calculus, the sheaf of polynomial functions on $\R$ being a trivial example. Moreover, the ringed-space approach is often inaccessible to physicists, for whom our framework is also intended.

\medskip\noindent
For this reason, we work with genuine supermanifolds equipped with the full apparatus of differential calculus, and treat grading as an additional geometric structure encoded by a vector field. Locally, any graded structure determines which functions are homogeneous. To ensure that the algebra generated by homogeneous functions is sufficiently rich, we require the existence of local homogeneous coordinates $(y^a)$. If $\deg(y^a)=w_a\in\R$, then the structure is encoded by the local weight vector field
$$
\n=\sum_aw_ay^a\pa_{y^a}.
$$
This allows us to define homogeneous functions (and tensor fields) of degree $w\in\R$ via the condition $\n(f)=wf$, even when $f$ is not polynomial in the coordinates. Requiring that transition maps preserve homogeneity ensures that the weight vector field is globally defined. Thus, a \emph{homogeneity structure} is represented by a global weight vector field, a viewpoint that is geometrically more transparent than that of abstract ringed spaces.

\medskip\noindent
\mn Our motivation is based on the well-known observation that grading is usually related to homogeneity, since in most approaches to graded supermanifolds one works with local homogeneous coordinates. Consequently, most graded supermanifolds appearing in the literature are homogeneity supermanifolds in our sense. This viewpoint is not new (see, e.g., the fundamental paper by Roytenberg \cite{Roytenberg:2002}), but it has to a large extent been replaced by the ringed-spaces approach.

Of course, in general, homogeneity is supplemented by additional structures, such as local graded vector superbundles combined with the requirement that transition maps be polynomial (or formal power series). Moreover, assuming that the weights of homogeneous functions take values in a discrete submonoid (usually $\N$ or $\Z$) of the additive group $\R$ provides further structure. In our case, homogeneous functions may have arbitrary real weights, so the corresponding algebra is not graded in the standard sense.

For supermanifolds, gradings should be compatible with parity only in the sense that every homogeneous coordinate can be chosen to be either even or odd. Note that this makes sense for arbitrary real weights (cf.\ \cite{Voronov:2002}) and is more general than determining parity from the weight, as is usually done for $\Z$-gradings. When arbitrary real weights are allowed, there is no natural way to assign parity based on them, as is possible in the $\Z$-graded case.

In the case of vector bundles $\zt:E\to M$, which are among the most thoroughly studied graded manifolds in differential geometry, the homogeneity structure is encoded by the Euler vector field $\n_E$ on $E$. The graded algebra, namely the algebra of functions that are polynomial along the fibers, is generated by linear functions, i.e.\ functions $f$ that are homogeneous of degree~1 and satisfy $\n_E(f)=f$ (Euler's Homogeneous Theorem). More generally, a tensor field $K$ is of degree $w\in\R$ if $\Ll_{\n_E}(K)=w\cdot K$. The base manifold $M$ is embedded in $E$ as the zero locus of the Euler vector field. In this case, the homogeneity structure defined by $\n$ is equivalent to an action of the monoid $(\R,\cdot)$ of multiplicative real numbers,
\be\label{ma} h:\R\ti E\to E,\quad h(t,y)=h_t(y),\ee
where $h_t$ is simply multiplication by $t\in\R$ in the vector bundle $E$. The map~(\ref{ma}) defines a monoid action in the sense that
\be\label{maa} h_1=\id_E,\quad\text{and}\quad h_t\circ h_s=h_{ts}.\ee
In particular, the bundle projection $\zt$ coincides with $h_0$. The vector field $\n_E$ is obtained as the generator of the restriction of $h$ to the $\R_+$-action $h:\R_+\ti E\to E$, where $\R_+$ denotes the subgroup of multiplicative $\R$ consisting of positive real numbers. To recover $h$ from $\n_E$, one observes that the $(\R_+,\cdot)$-action induced by $\n_E$ admits a unique smooth extension to a $(\R,\cdot)$-action. Indeed, by Euler's Homogeneous Theorem, functions that are differentiable and positively homogeneous of degree~1 are automatically linear, and hence homogeneous with respect to the entire group $(\R,\cdot)$.

This approach originated in \cite{Grabowski:2012}, where smooth actions of the monoid $(\R,\cdot)$ on manifolds (purely even in that case) were studied under the name \emph{graded bundles}. The corresponding objects turned out to carry canonical structures of fiber bundles with typical fiber $\R^k$, for some $k$, and the transition functions, by virtue of preserving homogeneity, turned out to be automatically polynomial in homogeneous coordinates of non-zero weight. Graded bundles were further investigated, with applications to geometric mechanics, in a series of papers \cite{Bruce:2015,Bruce:2015a,Bruce:2016,Bruce:2017a,Grabowska:2021}. A supersymmetric variant of this framework, which covers the geometry of N-manifolds, can be found—together with the $\C$-setting—in \cite{Jozwikowski:2016}. In fact, this was already the original setting of N-manifolds introduced by {\v S}evera \cite{Severa:2005}. This approach is useful for working with vector bundles and double vector bundles \cite{Grabowski:2009}. For instance, an N-manifold is, in this framework, a graded superbundle, that is, a supermanifold equipped with an action of $(\R,\cdot)$ such that $h_{-1}$ acts as the parity operator (cf.\ \cite{Jozwikowski:2016}).

This homogeneity-based approach to (super)vector bundles is effective and simplifies several concepts and proofs, including those concerning \emph{double vector bundles} \cite{Grabowski:2009}. It naturally leads to a \emph{compatibility condition} with other geometric structures, such as additional graded bundle structures, Lie groupoids, and Lie algebroids. This condition requires that each $h_t$ be a morphism in the relevant category (for example, a Lie groupoid morphism). In this way, one obtains constructive definitions of VB-groupoids, VB-algebroids, and related structures. For example, in the case of a homogeneity manifold with weight vector field $\n$, this compatibility condition means that a vector bundle $E$ in the category of homogeneity manifolds is a vector bundle whose Euler vector field $\n_E$ commutes with $\n$. By contrast, for \emph{double homogeneity} one requires a condition ensuring the existence of local coordinates that are bi-homogeneous, i.e.\ simultaneously homogeneous with respect to both homogeneity structures. This leads to a simple definition of a \emph{homogeneity vector superbundle}. It is currently unclear whether these two notions coincide (see Conjecture~\ref{conjecture}).

\mn The weight vector field is globally defined, so there is no intrinsic difficulty in preserving homogeneity under transition maps. However, we emphasize that ``preserving weights'' under transition maps does not mean that the set of weights of local coordinates is the same in every homogeneous chart. In fact, an important observation is that the systems of weights of local homogeneous coordinates may depend strongly on the chosen coordinate system. Since this observation is crucial for a proper understanding of our general concept, we illustrate it with a simple example.

\mn Consider the manifold $\R^\ti=\R\setminus\{0\}$ and choose $x$ as the standard coordinate inherited from $\R$. If we assign weight~$1$ to $x$, then the weight vector field is $\nabla=x\,\pa_x$. On the other hand, $y=1/x$ is a function of weight~$-1$, which can also be used as a coordinate on $\R^\ti$, and the corresponding weight vector field is $\n=-y\pa_y$. The transition function is
$$
\zf:\R^\ti\to\R^\ti\,,\quad y=\zf(x)=\frac{1}{x}\,.
$$
It is easy to see that $\zf_*(\nabla)=\nabla$. Hence, the weight vector field is preserved under the change of coordinates, so homogeneous functions have the same weight in both charts, even though the weights of the two coordinates differ.

On the other hand, we prove that in a neighbourhood of a point $m\in|M|$ in the body of $M$ at which the weight vector field vanishes, the weights of homogeneous coordinates around this point are the same, up to permutation. In particular, if the weights are integers for one coordinate system, then they are integers for any other coordinate system. This suggests that it may be interesting to consider a narrower class of homogeneity supermanifolds, assuming the existence of an atlas such that $\n$ vanishes at some point in each chart.

\medskip\noindent
In studying the geometry of general homogeneity supermanifolds, we also introduce and investigate double homogeneity supermanifolds, with homogeneity vector superbundles and homogeneity fiber superbundles as principal examples, as well as homogeneity Lie supergroups, tangent and cotangent lifts of homogeneity structures, and homogeneity distributions and codistributions. The main result in this part is the homogeneous Poincar\'e Lemma. The included examples illustrate how useful the concept of homogeneity is, for instance, in providing an elegant and effective treatment of vector superbundles.

\medskip\noindent
The next part is devoted to homogeneous distributions on homogeneity supermanifolds $(M,\n)$. These are defined as distributions $D\subset M$ that are homogeneous submanifolds of $\sT M$ with respect to the weight vector field given by the tangent lift $\dt\n$ of $\n$. In other words, $\dt\n$ is tangent to $D$. A fundamental result here is the homogeneous Frobenius Theorem.

\medskip\noindent
In the final part of the paper, we focus on homogeneous symplectic manifolds. Symplectic geometry provides a natural geometric framework for classical mechanics and for the integrability of differential equations. Numerous generalizations of symplectic geometry can be found in the literature. One non-classical aspect of supergeometry is the existence of odd geometric structures. In particular, the category of supermanifolds admits both even and odd contact \cite{Grabowski:2013} and symplectic structures; the latter are widely used in the BV--BRST formalism (see \cite{Khudaverdian:2004,Schwarz:1993,Schwarz:1996}). Another important concept is that of symplectic N-manifolds introduced by Roytenberg \cite{Roytenberg:2002}, where explicit descriptions of symplectic forms of degrees~1 and~2 are provided. This leads to an effective geometric approach to \emph{Courant algebroids} and their associated geometry.

\medskip\noindent
Analogs of the Darboux Theorem for general even and odd symplectic forms can be found in work by Shander \cite{Shander:1983} and Schwarz \cite{Schwarz:1990,Schwarz:1996}. Results of this type also appear in works by Kostant \cite{Kostant:1977} and Rogers \cite{Rogers:2007} on symplectic forms on supermanifolds, though without detailed proofs. The corresponding local Darboux forms are more complicated than their purely even counterparts, since wedge-commuting $1$-forms may exist. In this paper, motivated by the study of symplectic structures on $\Z_2^n$-manifolds by Bruce and Grabowski \cite{Bruce:2021}, we investigate homogeneous symplectic structures on homogeneity supermanifolds and analyze their local forms, proving the corresponding general graded Darboux Theorem.

Finally, we emphasize that our approach to homogeneity, and thus to graded supermanifolds, is rooted in traditional differential geometry, namely, in the use of collections of coordinate charts glued together by appropriate transition maps. We deliberately avoid direct references to sheaves and ringed spaces, which should make the paper more accessible to physicists. Readers who prefer standard (purely even) differential geometry may ignore the odd coordinates.

\section{Weight vector fields}
As we have already mentioned, for a given even vector field $\nabla$ on a supermanifold $M$ we can define (local) \emph{homogeneous functions $f$ of weight $w\in\R$} (and write $\w(f)=w$) as (local) smooth superfunctions on $M$ for which $\nabla(f)=w\cdot f$. We say that such a function is \emph{homogeneous of degree $\zl=(\zs,w)\in\Z_2\ti\R$} (and write $\deg(f)=\zl$) if, additionally, $f$ has the parity $\zs$. We will call $\zl$ \emph{even} if $\zs=0$ and \emph{odd} if $\zs=1$. Note, however, that local homogeneous functions form, generally, a `direct integral' rather than a direct sum of homogeneous subspaces. For instance, on $\R$ with $\n=\pa_x$ there exist homogeneous functions of any weight $w\in\R$, namely $e^{wx}$. Hence, the topology in the corresponding graded algebra is unclear. This also concerns the corresponding sheaves, making the ringed manifold approach to such graded structures problematic.

\begin{definition} \textbf{(Weight vector fields)}
An even vector field $\nabla$ on a supermanifold $M$ with the body (reduced manifold) $|M|$ we call a \emph{weight vector field} if, in a neighbourhood of every $p\in|M|$, we can always find local coordinates which are homogeneous with respect to $\nabla$. We do not assume that such coordinates vanish at $p$. Charts with homogeneous coordinates we call \emph{homogeneity charts}.
\end{definition}
\noindent The following is obvious.
\begin{proposition}
If $\nabla$ is a weight vector field on a supermanifold $M$, then there is an atlas on $M$ with local (super)coordinates $(x^i)$ such that
\be\label{ew} \nabla_M=\sum_iw_i\cdot x^i\,\pa_{x^i}\,,\ee
where $w_i\in\R$ represents the weight of the homogeneous coordinate $x^i$.
\end{proposition}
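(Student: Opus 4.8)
The plan is to unwind the definition of a weight vector field: the desired atlas is simply the collection of \emph{all} homogeneity charts, and essentially the only thing to verify is that in each such chart $\nabla$ has the stated coordinate form. So this is bookkeeping around the definition rather than a substantive argument, consistent with the ``the following is obvious'' remark preceding the statement.

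First I would fix a point $p\in|M|$ and invoke the definition to produce a homogeneity chart $U_p$ around $p$ with local supercoordinates $(x^i)$ that are homogeneous with respect to $\nabla$; by the definition of homogeneity this means $\nabla(x^i)=w_i\,x^i$ for some $w_i\in\R$. A preliminary remark is that $w_i$ is unambiguously attached to $x^i$: if also $\nabla(x^i)=w_i'\,x^i$ then $(w_i-w_i')x^i=0$ in the structure sheaf over $U_p$, and a coordinate function cannot satisfy such a relation unless $w_i=w_i'$, so the weight $\w(x^i)=w_i$ is well defined. This is also compatible with $\nabla$ being even, since $\nabla(x^i)$ then has the same parity as $x^i$.

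Next I would use the elementary fact that on a supermanifold chart an even vector field $X$ is recovered from its values on the coordinates by $X=\sum_i X(x^i)\,\pa_{x^i}$ (with left derivatives and the usual parity convention, the component $X(x^i)$ having the same parity as $x^i$). Specialising to $X=\nabla$ and substituting $\nabla(x^i)=w_i x^i$ gives $\nabla|_{U_p}=\sum_i w_i\,x^i\,\pa_{x^i}$, which is precisely the local form (\ref{ew}) on $U_p$. Letting $p$ run over $|M|$, the neighbourhoods $U_p$ cover the body, so the family of these homogeneity charts (or any subfamily still covering $|M|$) is an atlas of $M$; its transition maps are automatically admissible because they are transition maps of the ambient supermanifold and $\nabla$ is globally defined, so nothing about ``preserving weights'' needs to be checked at this stage. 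This produces the atlas asserted in the proposition.

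I do not expect a genuine obstacle here. If anything, the one point that merits a line of care is the middle paragraph: confirming that $\nabla(x^i)$ really is the $i$-th component of $\nabla$ in the $\Z_2$-graded sense, and that the $w_i$ are well defined; once these are in place, the proposition is immediate.
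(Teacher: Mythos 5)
Your proposal is correct and is exactly the bookkeeping the paper has in mind: the paper offers no proof at all (it labels the proposition ``obvious''), and unwinding the definition of a homogeneity chart together with the identity $\nabla=\sum_i\nabla(x^i)\,\pa_{x^i}$ is the intended argument. Your extra remarks (well-definedness of $w_i$ and parity of the components) are harmless and correct.
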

\begin{example}
According to the above Proposition, the vector field $\nabla_1=x\,\pa_x$ is a weight vector field on $\R$. On the other hand, the vector field $\nabla_2=x^2\,\pa_x$ is not a weight vector field. Indeed, if $y=y(x)$ is a homogeneous coordinate of weight $w\in\R$ in a neighbourhood of $0\in\R$, then $y'(0)=a\ne 0$ and $\nabla_2(y)=x^2\cdot y'(x)=w\,y(x)$. The derivative of the left side is 0 at $0\in\R$, so $w=0$, but $x^2\,y'(x)=0$ has only constant solutions $y(x)$. Note, however, that in a neighbourhood of $0\in\R$ there exist non-constant homogeneous functions with respect to $\nabla_2$. For instance, the function
$$f(x)=\begin{cases} 0 & \text{if} \quad x\le 0\\ e^{-1/x} & \text{if} \quad x>0\end{cases}$$
is homogeneous with respect to $\nabla_2$ with weight 1.
\end{example}
\no Before we go to a characterization of weight vector fields, let us note that if an even vector field $X$ on a supermanifold $M$ of dimension $(n|m)$ vanishes at $p\in|M|$, then there is a well-defined differential $D_pX\in\End(\sT_pM)\simeq\on{gl}(n,m)$. Indeed, we can view $D_pX$ as $\sT_pX:\sT_p M\to\sV_{0_p}\sT_pM$, identifying canonically the vertical part $\sV_{0_p}\sT_pM$ of the tangent space $\sT_{0_p}\sT_pM$ with $\sT_pM$. Note that this works only because $X(p)=0$. In this case, the condition (\ref{ew}) means that $D_p\n$ is diagonal.

\mn If, in turn, $X(p)\ne 0$, then it is well known that $X$ can be locally written as $\pa_{x^1}$ in a coordinate system $(x^a)$. This is a version of the straightening out theorem for supermanifolds and even vector fields \cite[Theorem 1]{Shander:1980} (see also \cite{Garnier:2013}). For odd vector fields, it works under the integrability condition $[X,X]=0$. If $\pa_{x^1}$ is a weight vector field, then $x^1$ must be even, and we can change this coordinate system to $(\tilde x^1,x^2,\dots,x^n)$, where $\tilde x^1=e^{x^1}$. In this system, $\tilde x^1$ is invertible, and the weights of coordinates are 1 for $\tilde x^1$ and 0 for $x^i$, $i>1$.
This means that we can always choose coordinates with weights $(1,0,\dots,0)$ in a neighbourhood of a point of $|M|$, where the weight vector field is non-vanishing. We can as well use the coordinate $e^{w\cdot x^1}$, with $w\ne 0$, which is of degree $w$.
\begin{proposition}\label{Sh}
An even vector field $\n$ on a supermanifold $M$ is a weight vector field if and only if $\n(M)$ is locally linear with $D_m\n$ diagonal for each $m\in|M|$ in the zero-locus of $\n$. In particular, non-vanishing even vector fields are weight vector fields.

\mn If in local homogeneous coordinates $(x^i)$ we have
$$\nabla_M=\sum_{i=1}^nw_i\cdot x^i\,\pa_{x^i},$$
then:
\begin{itemize}
\item In the case $\nabla_M(m)=0$, all weights of systems of homogeneous coordinates around $m$ are
the same for each system of homogeneous coordinates up to permutations among the weights of even and odd coordinates separately. In particular, if $\,\nabla_M$ admits local homogeneous coordinates with weights in a subset $\zG\subset\R$ (e.g., with integer weights), then all homogeneous coordinate systems around $m$ have weights in $\zG$ (have integer weights).

\item In the case $\nabla(m)\ne 0$, the vector $(w'_i)\in\R^{p+q}$ consists of weights of a system of local homogeneous coordinates in a neighbourhood of $m$ if and only if not all weights of even coordinates are 0. In particular, for any $w,v_i\in\R$, $i=2,\dots,n$, $w\ne 0$, there are coordinates $(x,z^i)$, $i=2,\dots,\dots n$ in a neighbourhood of $m$, such that $x$ is even, $x(m)=1$, $z^i(m)=0$, and
    $$\nabla_M=w\cdot x\,\pa_x+\sum_{i=2}^nv_i\cdot z^i\,\pa_{z^i}\,.$$
\end{itemize}
\end{proposition}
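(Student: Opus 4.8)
The plan is to split the argument according to whether $\n$ vanishes at the body point $m$ or not --- a dichotomy which is chart-independent --- and to reduce each case to tools already at hand: the straightening-out theorem for even vector fields \cite{Shander:1980} (together with the substitution $x^1\mapsto e^{wx^1}$) at non-vanishing points, and elementary linear algebra on the super vector space $\sT_mM$, applied to the invariantly defined endomorphism $D_m\n$, at vanishing points. Throughout, ``$D_m\n$ diagonal'' is read as: diagonalisable over $\R$ by a parity-preserving change of basis, i.e. the even-even and odd-odd blocks of $D_m\n$ --- it has no odd blocks, being even --- are each diagonalisable over $\R$.

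\emph{Characterisation.} If $\n$ is a weight vector field, then in homogeneity coordinates it has the form (\ref{ew}), which is linear, so $\n$ is locally linear; and if $\n(m)=0$ then $w_ix^i(m)=0$ for every $i$, so the translated coordinates $\tilde x^i=x^i-x^i(m)$ still give $\n=\sum_iw_i\tilde x^i\pa_{\tilde x^i}$ with $\tilde x(m)=0$, whence $D_m\n=\operatorname{diag}(w_i)$ in the basis $\pa_{x^i}|_m$ --- diagonal. Conversely, suppose $\n$ is locally linear with $D_m\n$ diagonal at every zero. Near $p$ with $\n(p)\ne 0$ the straightening theorem gives $\n=\pa_{x^1}$ with $x^1$ necessarily even; translating $x^1$ to vanish at $p$ and replacing it by $e^{wx^1}$, $w\ne 0$, yields homogeneity coordinates. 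Near $p$ with $\n(p)=0$, write $\n=\sum_{i,j}A^i_jx^j\pa_{x^i}$ in some chart; then $Ax(p)=0$, so $\tilde x^i=x^i-x^i(p)$ keeps $\n=\sum_{i,j}A^i_j\tilde x^j\pa_{\tilde x^i}$ with $\tilde x(p)=0$ and $A=D_p\n$; choosing an even invertible constant matrix $P$ with $PAP^{-1}$ diagonal, the linear (hence parity-preserving) change $y=P\tilde x$ produces (\ref{ew}). This includes, in particular, the non-vanishing case.

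\emph{Uniqueness of weights when $\n(m)=0$.} By the computation just made, in any homogeneity chart around $m$ the endomorphism $D_m\n$ of $\sT_mM$ is represented by $\operatorname{diag}(w_i)$ in the coordinate basis at $m$; two coordinate bases at $m$ differ by the (even, invertible) Jacobian of the transition map there, so the two diagonal matrices are conjugate by a parity-preserving matrix, hence their even-even parts are conjugate and their odd-odd parts are conjugate. Therefore the multiset of weights of even coordinates and the multiset of weights of odd coordinates --- the eigenvalues with multiplicity of the even-even and odd-odd blocks of $D_m\n$ --- are chart-independent. In particular, if one system has all weights in $\zG\subset\R$ (e.g. in $\Z$), so does every other.

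\emph{The case $\n(m)\ne 0$.} If $\n=\sum_iw'_ix^i\pa_{x^i}$ with $w'_i=0$ for every even $x^i$, then $\n$ is a sum of terms $w'_j\zx^j\pa_{\zx^j}$ with $\zx^j$ odd; since odd coordinates take the value $0$ at the body point $m$, this forces $\n(m)=0$, a contradiction, so some even weight must be nonzero. For the converse, by the non-vanishing case of the characterisation we may assume $\n=w\,x\,\pa_x$ with $x$ even, $x(m)=1$, and the remaining coordinates $z^i$ of weight $0$ translated so that $z^i(m)=0$. Given target weights $(w'_i)$ with, say, $w'_1\ne 0$ attached to an even coordinate, set $u^1=x^{w'_1/w}$ and $u^i=x^{w'_i/w}z^i$ for the rest; since $\n(x^a)=a\,w\,x^a$ for any real $a$, we get $\n(u^1)=w'_1u^1$, $\n(u^i)=w'_iu^i$, while $u^1(m)=1$, $u^i(m)=0$, and the super-Jacobian of $(u^i)$ at $m$ is triangular with nonvanishing diagonal, hence invertible. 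Thus $(u^i)$ is a homogeneity chart around $m$ with weight vector $(w'_i)$ and $\n=\sum_i\n(u^i)\pa_{u^i}=\sum_iw'_iu^i\pa_{u^i}$; taking $w'_1=w$, $w'_i=v_i$ and renaming gives the final displayed normal form. \textbf{The point demanding care} is keeping the $\Z_2$-grading under control at each step: diagonalising $D_m\n$ must be performed blockwise on the even-even and odd-odd parts, the substitutions and products used to prescribe weights must preserve parity, and $m$ must first be translated into the kernel of the linear part of $\n$ before the invariant identity $A=D_m\n$ may be invoked; once this bookkeeping is in place, the remaining verifications are routine.
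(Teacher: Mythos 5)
Your proof is correct and follows essentially the same route as the paper's: the invariance of weights at a zero of $\n$ is read off from the Jacobian of the transition map at $m$ (you phrase it as conjugacy of the matrix representations of $D_m\n$, the paper as a Taylor-expansion argument, but the content is identical), and the prescribed-weight coordinates at a non-vanishing point are built from real powers of a positive even homogeneous function of nonzero weight. Your normalization to $\n=w\,x\,\pa_x$ with weight-$0$ complements vanishing at $m$ before rescaling is in fact slightly cleaner than the paper's formula $z^i=x^i\,t^{(v_i-w_i)}-x^i(m)$ (which is homogeneous of weight $v_i$ only when $v_i\,x^i(m)=0$), and you also spell out the ``only if'' directions that the paper leaves implicit.
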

\begin{proof}
In the first case, let us take local homogeneous coordinates $(y^a,\zx^\za)$, $y^a$ even and $\zx^\za$ odd, with weights $u_a$ and $v_i$, so that
$$\nabla=\sum_au_a\cdot y^a\pa_{y^a}+\sum_\za v_\za\cdot\zx^\za\pa_{\zx^\za}$$
and $m=0$ in these coordinates.
For another homogeneous coordinate system $(\bar y^a,\bar\zx^\za)$, we have the Taylor expansions
\beas \bar y^a&=&t^a_b\cdot y^b+o(y,\zx)\\
\bar \zx^\za&=&s^\za_\zb\cdot\zx^\zb+ o(y,\zx).\,,
\eeas
where $[t^a_b]$ and $[s^\za_\zb]$ are invertible real matrices. It is clear that $\bar y^a$ is homogeneous of weight $\bar u_a$ only if $\bar u_a=u_b$ for all $b$ for which $t^a_b\ne 0$. Thus, any weight $\bar u_a$ belongs to the set of weights of $(y^b)$. On the other hand, as the matrix $[t^a_b]$ is invertible, any weight $u_b$ is present in the sequence $(\bar u_a)$ as many times as it appears in the sequence $(u_a)$. All this works also for odd coordinates.

\medskip\noindent
In the second case, there is an even coordinate, say $x^1$, such that $w_1\cdot x^1(m)\ne 0$. Hence, $x^1/x^1(m)=1$ and $x^1/x^1(m)$ is positive in a neighbourhood of $m$. Moreover, the function $t=(x^1/x^1(m))^{1/w_1}$ is an even function of weight 1 with $t(m)=1$ (so $t$ is positive in a neighbourhood of $m$) and $(\xd t)(m)\ne 0$ is proportional to $\xd x^1$, so that we can use $t$ as a coordinate complementary to $(x^i)$, $i>1$. If we take $w\ne 0$ and $v_j\in\R$, $j=2,\dots,n$, then putting $x=t^w$,
$z^i=x^i\cdot t^{(v_i-w_i)}-x^i(m)$ for $i=2,\dots,n$, we get
a system $(x,z^i)$ of homogeneous local coordinates for which  $x(m)=1$, $x$ is of degree $(0,w)$, and the coordinate $z^i$ has weight $v^i$ and vanishes at $m$.

\end{proof}
\begin{corollary}\label{cor}
If $\nabla_M(m)\ne 0$, then there exist local coordinates $(t,z^2,\dots,z^{p+q})$ in a neighbourhood of $m$ such that $t$ is even and $t(m)=1$, $z^i(m)=0$, and in these coordinates $\nabla_M=t\,\pa_t$.
A function $g(t,z^a)$ is homogeneous of degree $\zl=(\zs,w)$, if and only if
$$g(t,z)=t^w\cdot h(z)\,,$$
where $h$ is any smooth function in variables $z^i$  of parity $\zs$.
\end{corollary}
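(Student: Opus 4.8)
The plan is to obtain the statement almost for free from the second bullet of Proposition~\ref{Sh}. First I would apply that bullet in the special case $w=1$ and $v_2=\dots=v_{p+q}=0$: it produces local coordinates $(t,z^2,\dots,z^{p+q})$ around $m$ with $t$ even, $t(m)=1$, $z^i(m)=0$, and
$$\nabla_M=1\cdot t\,\pa_t+\sum_{i=2}^{p+q}0\cdot z^i\,\pa_{z^i}=t\,\pa_t.$$
So the first assertion is simply an instance of the proposition and needs no further argument; in particular all the $z^i$ have weight $0$.

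For the characterization of homogeneous functions, the task is to integrate the defining equation $\nabla_M(g)=w\,g$, i.e. $t\,\pa_t g=w\,g$, in a neighbourhood of $m$. Since $t(m)=1>0$, the body of $t$ is positive near $m$, so $t$ is an even invertible superfunction there, and its real power $t^w$ (equivalently $\exp(w\log t)$, or the Taylor expansion of $x\mapsto x^w$ about the body of $t$) is a well-defined even superfunction with $t\,\pa_t(t^w)=w\,t^w$. I would then set $h:=t^{-w}g$ and compute, using that $\pa_t$ is an even derivation,
$$t\,\pa_t h=(t\,\pa_t t^{-w})\,g+t^{-w}\,(t\,\pa_t g)=-w\,t^{-w}g+w\,t^{-w}g=0.$$
As $t$ is invertible near $m$ this gives $\pa_t h=0$, hence $h$ does not depend on $t$, i.e. $h=h(z^2,\dots,z^{p+q})$, and $g=t^w\,h(z)$; since $t^w$ is even, $h$ has the same parity $\zs$ as $g$. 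The converse is immediate: if $g=t^w\,h(z)$ with $h$ smooth of parity $\zs$, then $t\,\pa_t g=w\,t^w h=w\,g$ and $g$ has parity $\zs$, so $\deg(g)=(\zs,w)$.

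An equivalent route, closer to the straightening arguments used just before the statement, is to introduce a new even coordinate $s$ with $t=e^{s}$ (legitimate because $t$ is positive near $m$); then $\nabla_M=\pa_s$, the equation becomes $\pa_s g=w\,g$, and its general solution is $g=e^{ws}\,h(z)=t^w\,h(z)$ with $h$ independent of $s$. Either way the computation is the same.

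The only point requiring care --- and the one I would spell out --- is the passage from this scalar computation to genuine superfunctions: one must justify that $t^w$ really makes sense (invertibility of $t$ near $m$ together with functional calculus), and that ``$\pa_t h=0$ on a connected neighbourhood implies $h$ is independent of $t$'' still holds for superfunctions. The latter follows from the classical statement applied coefficientwise once $h$ is expanded as a polynomial in the odd coordinates with smooth coefficients in the even ones; it is routine, but since the entire content of the corollary is precisely this integration step, it deserves to be made explicit.
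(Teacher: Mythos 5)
Your proposal is correct and follows essentially the same route as the paper: the coordinate system is obtained as the special case $w=1$, $v_i=0$ of the second bullet of Proposition \ref{Sh}, and the characterization of homogeneous functions comes from solving the ordinary differential equation $t\,\pa_t g=w\,g$ with parameters $z^i$, whose solutions are exactly $t^w\cdot h(z)$. The extra care you take with $t^w$ as a superfunction and with the coefficientwise reduction to the classical case is a reasonable elaboration of what the paper leaves implicit.
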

\begin{proof}
The existence of a coordinate system of the desired form was established in Proposition \ref{Sh}.
The function $g$ has weight $w$ if and only if
$$\nabla_M(g)(t,z)=t\,\frac{\pa g}{\pa t}\,(t,z)=w\cdot g(t,z)\,.$$
This is, actually, an ordinary differential equation with respect to $t$ with parameters $(z^i)$.
All solutions are of the form $t^w\cdot h(z)$, and the corollary follows.

\end{proof}
\no It is interesting that for some weight vector fields there exist non-zero and simultaneously flat at some point homogeneous functions of arbitrary degree $w\in\R$, as shown in the following example.
\begin{example}
Take $\R^2$ with coordinates $(x,y)$, where $x$ is of degree $1$ and $y$ is of degree $-1$, so that the weight vector field is $\nabla=x\pa_x-y\pa_y$. Take a non-zero function $\zf:\R\to\R$ which is flat ($\zf$ and all derivatives of $\zf$ vanish) at the point $0\in\R$,  but $\zf(t)$ is positive for $t\ne 0$. Then, $f(x,y)=\zf(xy)$ is of degree $0$ with respect to $\nabla$, but it is not constant. The function $f_1(x,y)=x\cdot\zf(xy)$ is of degree $1$, but it is not a polynomial in coordinates, while the function
$$\begin{cases}f_d(x,y)=|x|^w\zf(xy)\quad\text{for}\quad x\ne 0;\\
f_d(x,y)=0 \qquad\text{for}\quad x=0\,,
\end{cases}$$
where $w\in \R$, is clearly smooth ($\zf$ is flat at $0$) and of degree $w\in\R$.
\end{example}

\begin{remark} Proposition \ref{Sh} shows that the restrictions on weight vector fields are not very tight. Especially, in charts where $\nabla_M$ is non-vanishing, the situation is rather pathological and not very interesting. In particular, any nowhere-vanishing vector field on a supermanifold $M$ is a weight vector field with homogeneous coordinates having any weights we declare (but at least one even coordinate must have a non-zero weight), so for many purposes, we may restrict our concept of a weight vector field. For instance, one can assume that the atlas consists of homogeneous charts with the additional assumption that $\n$ vanishes at some point in every chart. Then the degrees of homogeneity of coordinates will be locally fixed (up to a permutation, of course). We will postpone a deeper study of this case for a separate paper.
\end{remark}
\no Now, we will present a description of all possible weights of local homogeneous functions on a homogeneity supermanifold $M$ in a neighbourhood of a certain $m\in|M|$.
As the case $\nabla_M(m)\ne 0$ is already completely described (all real weights are possible), we will concentrate on the case $\nabla_M(m)=0$, which is more complicated. Since local smooth functions on $M$ are polynomials in odd variables $\zx^i$ and coefficients are local smooth functions on $|M|$, and since monomials in $\zx^i$ are always homogeneous, the knowledge about homogeneous functions on $M$ boils down to knowing the functions on $(|M|,\nabla_{|M|})$. Therefore, we can assume that $M$ is purely even.

\mn Let
$$\nabla_M=\sum_{a=1}^pw_a\cdot x^a\,\pa_{x^a}$$
be the local form of a weight vector field on a purely even homogeneity manifold $M$ of dimension $p$ in a neighbourhood of $m\in M$ such that $\nabla_M(m)=0$.
With $W_{\nabla_M}(m)$ we denote the set
$$W_{\nabla_M}(m)=\bigg\{ \sum_a n^a\cdot w_a\,|\, n^a\in\N \bigg\} \,.$$
According to Proposition \ref{Sh}, this definition is correct, i.e., does not depend on the choice of local homogeneous coordinates in a neighbourhood of $m$. It is obvious that all monomials in variables $x^a$ are homogeneous with weights in $W_{\nabla_M}(m)$ and that $W_{\nabla_M}(m)\subset\Z$ if all $w_a$ are integers.

We can divide the coordinates $(x^a)$ into two families: $(y^\za)$ with non-zero weights, and $(z^j)$ with weights $0$, so that
\be\label{wvfa}\nabla_M=\sum_\za w_\za\cdot y^\za\,\pa_{y^\za},\ee
and $y^\za(m)=0$ for all $\za$. We can also assume that $z^j(m)=0$ for all $j$. In particular, any homogeneous function with a non-zero weight is vanishing on the submanifold $y^a=0$.
\begin{proposition}\label{pos} Consider local supercoordinates $(y^\za,z^j)$ and the weight vector field (\ref{wvfa}). Then, we have the following possibilities.
\begin{enumerate}
\item If all $w_\za>0$ (resp., $w_\za<0$), then any homogeneous function $f$ near $m$ is a polynomial in variables $y^\za$ with coefficients being functions in variables $z^j$. The weights of homogeneous functions around $m$ are precisely the elements of $W_{\nabla_M}(m)$ which is in this case a discrete subset of $\R_{\ge 0}$ (resp., $\R_{\le 0}$). In particular, allowed are only non-negative (resp., non-positive) weights of homogeneous functions.
\item If there are positive and negative $w_\za$, say, $w_1>0$ and $w_2<0$, then there are local homogeneous functions of weight 0 which are not of the form $g(z)$. If a local homogeneous function $f$ of weight $w$ is not flat at $m$ modulo a constant (i.e., a non-zero derivative of $f$ does not vanish at $m$), then $w\in W_{\nabla_M}(m)$. On the other hand, there are local homogeneous functions of arbitrary weight $w\in\R$ and flat at $m$.
\end{enumerate}
\end{proposition}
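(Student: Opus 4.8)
Take $m$ to be the origin in the coordinates $(y^\alpha,z^j)$. The idea is to read off everything from the Taylor expansion of $f$ at $m$ together with the flow of $\nabla_M$, and to produce the exotic homogeneous functions of part (2) explicitly. \emph{Step 1.} Let $f$ be homogeneous of weight $w$. Applying $\partial_y^J$ to $\nabla_M f=wf$ and using $\partial_{y^\alpha}\nabla_M=w_\alpha\partial_{y^\alpha}+\nabla_M\partial_{y^\alpha}$ gives $\nabla_M(\partial_y^Jf)=(w-|J|_w)\,\partial_y^Jf$ with $|J|_w=\sum_\alpha J_\alpha w_\alpha$; since every summand of $\nabla_M=\sum_\alpha w_\alpha y^\alpha\partial_{y^\alpha}$ carries a factor $y^\alpha$, evaluating at $y=0$ forces $(w-|J|_w)(\partial_y^Jf)(0,z)=0$, so the smooth function $c_J(z):=\tfrac1{J!}(\partial_y^Jf)(0,z)$ vanishes identically unless $|J|_w=w$. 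If $f$ is not flat at $m$ modulo a constant, some derivative $\partial_y^J\partial_z^Kf$ with $|J|+|K|\ge1$ is nonzero at $m$, whence $\partial_y^Jf(0,\cdot)\not\equiv0$, so $|J|_w=w$; since $J$ has entries in $\N$ this gives $w\in W_{\nabla_M}(m)$ (with $w=0$ when $J=0$), which is the middle assertion of (2).

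\emph{Step 2 (Case 1).} Assume all $w_\alpha>0$ (the case all $w_\alpha<0$ follows by $w\mapsto-w$). From $|I|_w\ge(\min_\alpha w_\alpha)|I|$ the set $\{I:|I|_w=w\}$ is finite, so $P:=\sum_{|I|_w=w}c_I(z)\,y^I$ is an honest polynomial in the $y^\alpha$ with coefficients smooth in the $z^j$, and by Step 1 $h:=f-P$ is flat along $\{y=0\}$ to all orders; moreover $h$ is still homogeneous of weight $w$ because $\nabla_M(c_I(z)y^I)=|I|_w c_I(z)y^I=w\,c_I(z)y^I$. I then prove $h\equiv0$: after shrinking the chart to a product $U_y\times U_z$ with $U_y$ a ball, the backward flow $\phi_t(y^\alpha,z^j)=(e^{w_\alpha t}y^\alpha,\,z^j)$, $t\le0$, preserves $U_y\times U_z$, its $y$-component has norm $\le e^{(\min_\alpha w_\alpha)t}|y|$, and $h(\phi_t(y,z))=e^{wt}h(y,z)$; flatness of $h$ of order $k$ then gives $|h(y,z)|=e^{-wt}|h(\phi_t(y,z))|\le C_k\,e^{(k\min_\alpha w_\alpha-w)t}|y|^k$, and letting $t\to-\infty$ with $k>w/\min_\alpha w_\alpha$ forces $h=0$. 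Hence $f=P$; the weights of nonzero homogeneous functions near $m$ are precisely the elements of $W_{\nabla_M}(m)=\{\sum_\alpha n^\alpha w_\alpha:n^\alpha\in\N\}$ (each realized by a monomial $y^I$), and $W_{\nabla_M}(m)\subset\R_{\ge0}$ is discrete because its intersection with any $[0,R]$ is finite. This proves (1).

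\emph{Step 3 (Case 2).} Assume $w_1>0>w_2$. The workhorse is the elementary fact that if $\phi\colon\R\to\R$ is flat at $0$ and positive elsewhere and $a_1,\dots,a_r>0$, then $\phi(|u_1|^{a_1}\cdots|u_r|^{a_r})$ is $C^\infty$ and flat along each $\{u_i=0\}$: flatness of $\phi$ absorbs the power-type singularities of the argument, since each partial derivative of the composite still carries a factor $\phi^{(\ell)}$ evaluated at a point of size $\le|u|^{a_1+\cdots+a_r}$, giving bounds $\le C_N|u|^N$ for all $N$. Now $\nabla_M(|y^1|^{-w_2})=-w_1w_2|y^1|^{-w_2}$ and $\nabla_M(|y^2|^{w_1})=w_1w_2|y^2|^{w_1}$, so $|y^1|^{-w_2}|y^2|^{w_1}$ is $\nabla_M$-invariant and $\phi(|y^1|^{-w_2}|y^2|^{w_1})$ is a smooth homogeneous function of weight $0$, positive off $\{y^1y^2=0\}$, hence not of the form $g(z)$. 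For arbitrary $w\in\R$, put
$$f_w=|y^1|^{w/w_1}\cdot\phi\bigl(|y^1|^{-w_2}|y^2|^{w_1}\bigr),$$
extended by $0$ on $\{y^1y^2=0\}$; the same fact shows $f_w$ is $C^\infty$ and flat at $m$, it is nonzero, and $\nabla_M(|y^1|^{w/w_1})=w\,|y^1|^{w/w_1}$ together with the $\nabla_M$-invariance of the $\phi$-factor gives $\nabla_M f_w=w\,f_w$. This yields the remaining assertions of (2); the bookkeeping is routine (the weight-$0$ coordinates contribute nothing to $W_{\nabla_M}(m)$, and $W_{\nabla_M}(m)\subset\Z$ when all $w_a\in\Z$).

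\emph{Main obstacle.} The only step that is not a routine Taylor/flow computation is the smoothness-and-flatness claim of Step 3: that composing the flat $\phi$ with the genuinely non-smooth (and, for $w<0$, unbounded) expression $|y^1|^{w/w_1-w_2}|y^2|^{w_1}$ yields a $C^\infty$ function flat at $m$. Proving it amounts to bounding an arbitrary mixed partial of $f_w$ on $\{y^1y^2\ne0\}$ by $C_N\cdot(\text{distance to }m)^N$ for every $N$, using that differentiation never removes the surviving $\phi^{(\ell)}$ factor; with that in hand the rest is straightforward.
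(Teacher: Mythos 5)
Your proof is correct, but it reaches the conclusion by a genuinely different route than the paper in the two places where real work is needed. For part (1) the paper runs a double induction: it first shows, via the limit $t\to 0^+$ of $f\circ h_t=t^w f$, that non-positive weights force $w=0$ and independence of the $y$'s, and then climbs the weights in steps of the minimal positive weight $a$, using that $\partial_{y^k}f$ has strictly smaller weight (hence is polynomial by induction) and integrating it back to $f$ by a second, finite induction on the number of $y$-variables. You instead subtract the weighted Taylor polynomial $P=\sum_{|I|_w=w}c_I(z)y^I$ and annihilate the flat remainder by the contraction estimate along the backward flow; this is shorter and isolates the actual mechanism (a weight vector field with weights of one sign admits no non-zero flat homogeneous functions), at the price of genuine analysis (uniform Taylor-remainder bounds) where the paper only integrates. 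For the claim $w\in W_{\nabla_M}(m)$ in part (2), the paper extracts the lowest-order Taylor polynomial of $f$ and argues it must itself be $\nabla_M$-homogeneous; your identity $\nabla_M(\partial_y^Jf)=(w-|J|_w)\,\partial_y^Jf$ evaluated on $\{y=0\}$ gives the same conclusion more directly and cleanly. The exotic functions in part (2) are essentially the paper's: it uses $h\big(y^1|y^2|^{-w_1/w_2}\big)$ and $(y^1)^{w/w_1}h(\cdot)$, you use $\phi\big(|y^1|^{-w_2}|y^2|^{w_1}\big)$ and $|y^1|^{w/w_1}\phi(\cdot)$ --- the same $\nabla_M$-invariant up to reparametrization (your use of $|y^1|$ rather than $y^1$ in the prefactor is actually the safer choice for non-integer exponents), and your smoothness/flatness justification is at the level of detail the paper itself offers. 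One point you should make explicit: the norm estimates of your Step 2 apply literally only to ordinary (even) functions, so you need the reduction --- stated in the paper immediately before the proposition and redone inside its proof --- that a homogeneous superfunction is a sum of odd monomials times even homogeneous coefficients of correspondingly shifted weight, and then run the flow argument on those coefficients. With that remark added, the argument is complete.
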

\begin{proof}
\textbf{1.} First, we will consider the case of positive weights. Let $h_t(y^\za,z^j)=(t^{w_\za}\cdot y^\za, z^j)$ for $t\in\R$ close to 1. Since all $w_\za$ are positive, we can choose a neighbourhood $U$ of $m$ such that $h_t$ in $U$ is defined for all $0<t\le 1$. It is easy to see that $f(y,z)$ is homogeneous of degree $w\in\R$ if and only if $f\circ h_t=t^w\circ f$.
We will prove now that all homogeneous functions of weight $w\le 0$ are of the form $f(z)$, so that $w=0$.

Let $(y^\za)=(u^i,\zx^a)$ (resp., $(z^j)=(v^I,\zh^A)$), where the coordinates $u^i$ are even and $\zx^a$ are odd
(resp. $v^I$ are even and $\zh^A$ are odd). Any function $f(y,z)$ is a polynomial in coordinates $(\zx,\zh)$ with coefficients of the form $g(u,v)$. Consider a monomial $$P(y,z)=g(u^i,v^I)\,\zx^{a_1}\cdots\zx^{a_k}\,\zh^{A_1}\cdots\zh^{A_l}$$
being a summand in the polynomial $f$. If $f$ is homogeneous with weight $w$, then $P\circ h_t=t^w\cdot P$, so
$$(P\circ h_t)(y,z)=(g\circ h_t)(u,v)\,\zx^{a_1}\cdots\zx^{a_k}\,\zh^{A_1}\cdots\zh^{A_l}\cdot t^{(w_{a_1}+\dots +w_{a_k})}=t^w\cdot P(y,z)\,,$$
so that  $(g\circ h_t)=t^{w'}\cdot g$, where $w'=w-w_{a_1}-\dots -w_{a_k}$. If $w<0$, then clearly $w'<0$ (as $w_{a_s}> 0$). In particular,
$$(g\circ h_t)(u,v)=g(t^{w_i}\cdot u^i,v)=t^{w'}\cdot g(u,v)\,.$$
Note that $g$ is a (local) smooth function on $M$. If $g$ is nonzero, there is $(u_0,v_0)$ such that $g(u_0,v_0)\ne 0$.
But $w_i>0$, so
$$g(0,v_0)=\lim_{t\to 0+}(g\circ h_t)(u_0,v_0)\ne\lim_{t\to 0+}t^{w'}\,g(u_0,v_0)=\pm\infty\,;$$
a contradiction. Hence, $w'=0$, so consequently $w=0$ and the monomial $P$ does not depend on $(u^i,\zx^a)$.

\mn Let now $a>0$ be the minimal number in the finite set of positive reals $\{ w_\za\}$. We will show inductively with respect to $n$ that all homogeneous functions of weight $\le na$ are polynomial. In the case $n=0$, we have just proved it. So, suppose
that all homogeneous functions of weight $\le na$ are polynomials in $y^\za$ with coefficients in the form $g(z)$ and let $f(y^\za,z^j)$ be homogeneous with weight $w\le (n+1)a$. Then, according to Lemma \ref{ul}, the partial derivative $\frac{\pa f}{\pa y^k}(y,z)$ is of weight $w-w_k\le (n+1)a-w_k\le na$, so it is a polynomial in variables $y^a$ with coefficients of the form $g(z)$. We will show that $f$ itself is of the desired form, using another (this time finite) induction with respect to the number $r$ of variables $y^\za$.

The case $r=0$ is trivial, so assume the inductive assumption for $r$. Having now $(r+1)$ variables $y^1,\dots,y^{r+1}$, consider a function $f(y^\za,z^j)$ whose partial derivative with respect to $y^1$ is a homogeneous of weight $(w-w_1)$, polynomial in variables $y^\za$ and coefficients being homogeneous functions in $(z^j)$. In particular,
$$\frac{\pa f}{\pa y^1}(y,z^j)=\sum_ig_i(z)y^{a^i}\,,$$
where $a^i$ are multi-indices, $a^i=(a^i_1,\dots,a^i_{r+1})$, and
$$y^{a^i}=(y^1)^{a^i_1}\cdots (y^{r+1})^{a^i_{r+1}}\,.$$
It is easy to see that $y^{a^i}$ is homogeneous with weight $|a^i|=\sum_{\za=1}^{r+1}a^i_\za\cdot w_\za$, so that
$|a^i|=w-w_1$ for all $i$. It is clear now that
$$f(y^\za,z^j)=\sum_i\frac{g_i(z)}{a^i_1+1}y^{a^i}\cdot y^1 + f_1(y^2,\dots,y^{r+1},z^j),$$
for some function $f_1$ which is of weight $w$ and depends only on $r$ variables from $(y^\za)$. Now, we apply the inductive assumption. All this implies  immediately that the set of possible weights of local homogeneous functions is exactly $W_{\nabla_M}(m)$. The proof of the case $w_\za<0$ is completely analogous.

\medskip\noindent
\textbf{2.} Let $a=-w_1/w_2$. Then, $a>0$ and the function $g(y,z)=y^1\,|y^2|^a$ is homogeneous of weight 0 on the open subset defined by $y^2\ne 0$. Of course, this function is generally not smooth at points for which $y^2=0$, but we can compose it with a smooth function $h:\R\to\R$ which is flat at 0. Then, the function $f(y,z)=h(y^1\,|y^2|^a)$ is smooth and flat at $m$. Actually, it is flat at all points $(y,z)$ with $y^1\cdot y^2=0$. Lemma \ref{ul} implies now that $f$ is homogeneous of weight 0. Let us take any $w\in\R$. The function
$$f_w(y,z)=(y^1)^{w/w_1}f(y,z)$$ is clearly smooth and of weight $w$.

On the other hand, if $f(y,z)$, vanishing at $m$, is not flat at $m$, then there is a Taylor decomposition $f(y,z)=P(y,z)+o(y,z)$ of $f$ around $0$, with $P$ being a non-zero polynomial of degree $r$ in variables $(y,z)$, and $o(y,z)$ being infinitesimal of order $< r$. We have $$\nabla_M(f)=\nabla_M(P)(y,z)+\nabla_M(o)(y,z)=w\cdot(P(y,z)+o(y,z))\,.$$
Since $\nabla_M(m)=0$, also $\nabla_M(o)(y,z)=o'(y,z)$, is infinitesimal of order $<r$. But the Taylor polynomial of degree $r$ is  uniquely determined up to order $r$, so $\nabla_M(P)=w\cdot P$, i.e., the polynomial $P$ is homogeneous of degree $w$. All monomials in variables $(y,z)$ are homogeneous functions with weights in $W_{\nabla_M}(m)$, so $w\in W_{\nabla_M}(m)$.

\end{proof}
\section{Homogeneity supermanifolds}
\begin{definition} \textbf{(Homogeneity supermanifolds)}
A supermanifold $M$ equipped with a weight vector field $\nabla_M$, we will call a \emph{homogeneity supermanifold}. If $\zG\subset\R$ is a subset of $\R$ for which there exists a distinguished atlas of homogeneity charts with coordinates having weights in $\zG$, then we will say that $(M,\nabla_M)$ is a \emph{$\zG$-homogeneity supermanifold}. The corresponding charts we call \emph{allowed charts} and the corresponding coordinates \emph{allowed coordinates}.
\end{definition}

\begin{remark} Note that, since $\nabla_M$ is a well-defined global geometric object on $M$, the transition functions respect $\nabla_M$, so they are in this sense `weight-preserving'. In particular, as the property of being a (local) homogeneous function of weight $w$ with respect to $\nabla_M$ does not depend on the choice of coordinates, homogeneous functions of weight $w$ are the same for local coordinate systems on their intersection. Consequently, one can construct homogeneity supermanifolds by choosing an atlas, declaring the weights of local coordinates, and demanding that the corresponding transition maps preserve the local weight vector fields, which therefore give rise to a globally defined weight vector field.
\end{remark}
\begin{remark}
This idea of  homogeneity supermanifolds is essentially known (see, e.g., \cite{Roytenberg:2002,Severa:2005}, where the corresponding objects are called \emph{graded manifolds}), although usually restricted to (mainly non-negative) integers. In the mentioned papers, only those $\N$-homogeneity supermanifolds are studied for which the parity of homogeneous coordinates is determined by the weight; they are called \emph{N-manifolds}. They have also been studied in \cite{Grabowski:2012,Jozwikowski:2016} in a simplifying framework. Determining the parity by the weight is in our picture, and an additional assumption, we do not consider in this paper.

A more general concept of a graded manifold, probably the closest to ours, was that by Voronov \cite{Voronov:2002} (cf. also \cite{Mehta:2006}). In his approach, acceptable weights are also integers but not linked to parity. The allowed charts have the form $U\ti\R^{k|l}$, where the fiber coordinates have non-zero integer degrees and $U$ is an open subset of $\R^{n|m}$ with the trivial homogeneity structure, so the manifold has a structure of a fiber bundle (Voronov says that even coordinates of non-zero weight are `cylindrical'). There is an additional strong requirement that the allowed transition maps are polynomial in fiber coordinates, so we deal with a polynomial fiber bundle, so not all homogeneous charts are allowed. Of course, our approach covers all these examples based on homogeneous coordinates.

Note that graded manifolds, defined simply as graded ringed spaces, are concepts that are too weak to have a proper differential calculus. For instance, working on $\R$ with the sheaf of polynomial functions, we are unable to define exponential or trigonometric functions, as they are not sections of the sheaf. We must consider our sheaf as a subsheaf of the structure sheaf of the supermanifold. It is therefore an additional structure on the supermanifold, which, in most cases, can be viewed as a homogeneity structure in our sense.
\end{remark}
\begin{definition} \textbf{(Morphisms)}
A \emph{morphism} of a  homogeneity supermanifold $(M,\nabla_M)$ into a  homogeneity supermanifold $(N,\nabla_N)$ is a morphism $\zf:M\to N$ of supermanifolds which relates $\nabla_M$ with $\nabla_N$. In other words,
the pullbacks of (local) homogeneous functions on $N$ are homogeneous on $M$ with the same degree. More generally, a morphism $\zf:M\to N$ of supermanifolds  is called \emph{of weight $\zl\in\R$} if the pullbacks of (local) homogeneous functions of weight $w\in\R$ are homogeneous of weight $w+\zl$.
\end{definition}
\noindent The following is obvious.
\begin{theorem}
Homogeneity supermanifolds with morphisms defined as above form a category which we will denote $\catname{HSMan}$.
\end{theorem}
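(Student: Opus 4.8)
The assertion is that the objects and morphisms just defined satisfy the category axioms, so the plan is simply to verify those axioms, noting that the bulk of the work is already carried out in the ambient category $\catname{SMan}$ of supermanifolds. Concretely, $\catname{SMan}$ is a category: composition of supermanifold morphisms is associative and each identity $\id_M$ is a two-sided unit. Since the objects of $\catname{HSMan}$ are supermanifolds carrying the extra datum of a weight vector field, and its morphisms are exactly those supermanifold morphisms that relate the weight vector fields, it suffices to check two things: (i) the class of weight-preserving morphisms is closed under composition, and (ii) it contains every identity $\id_M$. Granting these, associativity and the unit laws for $\catname{HSMan}$ are inherited by restriction from $\catname{SMan}$, and $\catname{HSMan}$ is the (wide, non-full) subcategory of $\catname{SMan}$ cut out by the weight-preserving morphisms.

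Point (ii) is immediate: $\id_M$ relates $\nabla_M$ with itself, since the pullback of any local homogeneous function along $\id_M$ is that same function, hence homogeneous of the same degree. For point (i), let $\varphi\colon M\to N$ and $\psi\colon N\to P$ be morphisms in $\catname{HSMan}$. I would use the pullback characterization of the morphism condition from the Definition above: given a local homogeneous function $f$ of degree $(\sigma,w)$ on $P$, the function $\psi^*f$ is local homogeneous of degree $(\sigma,w)$ on $N$ because $\psi$ is a morphism, and then $\varphi^*(\psi^*f)$ is local homogeneous of degree $(\sigma,w)$ on $M$ because $\varphi$ is a morphism. Since $\varphi^*\circ\psi^*=(\psi\circ\varphi)^*$ on the overlaps where these maps are defined, $(\psi\circ\varphi)^*f$ is homogeneous of degree $(\sigma,w)$, so $\psi\circ\varphi$ is again a morphism in $\catname{HSMan}$. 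Equivalently, in terms of relatedness of the weight vector fields, $\varphi^*\circ\nabla_N=\nabla_M\circ\varphi^*$ and $\psi^*\circ\nabla_P=\nabla_N\circ\psi^*$ compose to $(\psi\circ\varphi)^*\circ\nabla_P=\nabla_M\circ(\psi\circ\varphi)^*$, i.e.\ $\sT(\psi\circ\varphi)\circ\nabla_M=\nabla_P\circ(\psi\circ\varphi)$ by functoriality of the tangent functor on supermanifolds.

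I do not expect any genuine obstacle here; the only point deserving a word of care is that on a homogeneity supermanifold the homogeneous functions exist only locally and need not generate the structure sheaf, so the clause ``pullbacks of homogeneous functions are homogeneous of the same degree'' must be read locally, and for a fully rigorous argument one should work directly with the $\varphi$-relatedness condition $\varphi^*\circ\nabla_N=\nabla_M\circ\varphi^*$ of the weight vector fields, which is manifestly stable under composition. As a byproduct of the same computation one also sees that a morphism of weight $\lambda$ followed by one of weight $\mu$ is a morphism of weight $\lambda+\mu$, so composition in the extended setting adds weights, with $\catname{HSMan}$ recovered as the subcategory of weight-$0$ morphisms.
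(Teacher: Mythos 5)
Your verification is correct and is exactly the routine check the paper has in mind when it declares the theorem ``obvious'': identities preserve the weight vector field, the composite of weight-preserving morphisms is weight-preserving (equivalently, relatedness of weight vector fields is stable under composition), and associativity and unit laws are inherited from the category of supermanifolds. Your closing remark that one should work with the relatedness condition $\varphi^*\circ\nabla_N=\nabla_M\circ\varphi^*$ rather than only with pullbacks of local homogeneous functions is a sensible precaution, and the observation that weights of morphisms add under composition is a correct bonus.
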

\begin{proposition} \textbf{(Cartesian products)}
If $(M,\nabla_M)$ and $(N,\nabla_N)$ are  homogeneity supermanifolds, then the Cartesian product $M\ti N$ carries a canonical structure of a  homogeneity supermanifold whose weight vector field $\nabla_{M\ti N}$ is defined as the sum $\tilde\nabla_M+\tilde\nabla_N$, where $\tilde\nabla_M$  (resp. $\tilde\nabla_N$) is understood as the unique vector field on $M\ti N$ whose projections on $M$ and $N$ are $\nabla_M$ and $0$ (resp. $0$ and $\nabla_N)$). We will just write, with some abuse of notation, $\nabla_{M\ti N}=\nabla_M+\nabla_N$, and we will call $(M\ti N,\nabla_M+\nabla_N)$ the \emph{Cartesian product} of  homogeneity supermanifolds $(M,\nabla_M)$ and $(N,\nabla_N)$.
\end{proposition}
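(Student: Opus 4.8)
The plan is to verify the two assertions contained in the statement: that the lifts $\tilde\nabla_M$ and $\tilde\nabla_N$ are unambiguously defined vector fields on $M\ti N$, and that their sum $\nabla_{M\ti N}$ is a weight vector field in the sense of the Definition, i.e.\ admits the diagonal local form (\ref{ew}) around every body point. Since $\nabla_M$, $\nabla_N$ are even, $\nabla_{M\ti N}$ will be even as well, so only these two things need attention.

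First I would pin down $\tilde\nabla_M$ directly in product charts. Around a point $(p,q)\in|M\ti N|=|M|\ti|N|$ take a chart $U\ti V$, where $U$ is a chart on $M$ with coordinates $(x^i)$ and $V$ is a chart on $N$ with coordinates $(y^j)$; on $U\ti V$ I write $\nabla_M$ in the coordinates $(x^i)$ and regard the result as a vector field with vanishing $\pa_{y^j}$-components, denoting it $\tilde\nabla_M$ on that chart. Because the transition maps of the product atlas have the block form $(\bar x(x),\bar y(y))$ — the $\bar x^i$ depending only on the $x$'s and the $\bar y^j$ only on the $y$'s — these locally defined vector fields agree on overlaps and patch to a globally defined even vector field $\tilde\nabla_M$ on $M\ti N$. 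By construction $\pr_M$ relates $\tilde\nabla_M$ with $\nabla_M$ and $\pr_N$ relates it with $0$; conversely, since at a body point the value of any vector field on $M\ti N$ lies in $\sT_pM\oplus\sT_qN$, these two relatedness conditions determine $\tilde\nabla_M$ uniquely. Symmetrically one constructs $\tilde\nabla_N$, and one sets $\nabla_{M\ti N}=\tilde\nabla_M+\tilde\nabla_N$.

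Next I would check the weight-vector-field property at an arbitrary $(p,q)\in|M\ti N|$. The hypotheses supply homogeneity charts around $p$ and $q$ with coordinates $(x^i)$, $(y^j)$ in which $\nabla_M=\sum_i w_i\,x^i\pa_{x^i}$ and $\nabla_N=\sum_j v_j\,y^j\pa_{y^j}$. Then $(x^i,y^j)$ is a coordinate system on $M\ti N$ near $(p,q)$, and by the defining property of the lifts (the $x^i$ are killed by $\tilde\nabla_N$, the $y^j$ by $\tilde\nabla_M$) no mixed terms arise, so
\be \nabla_{M\ti N}=\sum_i w_i\,x^i\pa_{x^i}+\sum_j v_j\,y^j\pa_{y^j}\,. \ee
This is exactly the form (\ref{ew}), so $(x^i,y^j)$ is a homogeneity chart and $\nabla_{M\ti N}$ is a weight vector field. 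The same display immediately shows that $\pr_M$ and $\pr_N$ are morphisms in $\catname{HSMan}$ and that any pair of morphisms from a third homogeneity supermanifold into the two factors glues to a morphism into $M\ti N$, so the structure is canonical and in fact realizes the categorical product; I would record this as a remark.

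I do not expect a real obstacle. The only step deserving a moment's care is the gluing of the chartwise definitions of $\tilde\nabla_M$ (and $\tilde\nabla_N$) across the product atlas, which rests entirely on the block form of transition maps on a Cartesian product and on the fact that a vector field on a product takes values in the direct sum of the tangent spaces of the factors at each body point; the remainder is a one-line coordinate computation.
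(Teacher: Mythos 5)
Your proposal is correct and follows essentially the same route as the paper: the heart of both arguments is the observation that in a product chart built from homogeneity charts on the factors, $\nabla_{M\ti N}$ takes the diagonal form $\sum_i w_i\,x^i\pa_{x^i}+\sum_j v_j\,y^j\pa_{y^j}$, which is exactly (\ref{ew}). The paper states only this one-line computation, while you additionally spell out the gluing and uniqueness of the lifts $\tilde\nabla_M$, $\tilde\nabla_N$ and the categorical-product remark; these are correct but routine elaborations rather than a different argument.
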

\begin{proof}
Indeed, if $(x^a)$ and $(y^j)$ are homogeneous coordinates on $M$ and $N$ with weights $(w_a)$ and $(v_j)$, respectively, then in local coordinates
$(x^a,y^j)$ on $M\ti N$ the weight vector field $\nabla_{M\ti N}$ reads
$$\nabla_{M\ti N}=\sum_aw_a\cdot x^a\pa_{x^a}+\sum_jv_j\cdot y^j\pa_{y^j}\,.$$
\end{proof}
\begin{definition}
A submanifold $N$ of a  homogeneity supermanifold $M$ we call \emph{homogeneous submanifold} if $\nabla_M$ is tangent to $N$.
\end{definition}
\begin{proposition} If $N$ is a homogeneous submanifold  of a  homogeneity supermanifold $M$, then $N$ is canonically a  homogeneity supermanifold itself with the weight vector field $\nabla_N$ being the restriction of $\nabla_M$ to $N$, i.e., $\nabla_N=\nabla_M\big|_N$.
\end{proposition}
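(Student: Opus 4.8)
The plan is to verify that $\nabla_N:=\nabla_M\big|_N$ satisfies the defining condition for a weight vector field on $N$, i.e.\ that around every point $m\in|N|$ there is a local coordinate system on $N$ which is homogeneous with respect to $\nabla_N$. First observe that $\nabla_N$ is genuinely a well-defined even vector field on $N$: the tangency of $\nabla_M$ to $N$ means that $\nabla_M$ preserves the ideal (sheaf) of $N$, hence descends to an even derivation $\nabla_N$ of the structure sheaf of $N$, characterised by $\nabla_N(g\big|_N)=(\nabla_M\tilde g)\big|_N$ for any local extension $\tilde g$ of a local function $g$ on $N$. Parity and well-definedness being automatic, the only real point is to produce homogeneous coordinates on $N$; I will do this directly, uniformly in $m$, with no need to split into the cases $\nabla_M(m)=0$ and $\nabla_M(m)\neq 0$ (one could instead go through the characterisation in Proposition \ref{Sh}, but the direct construction is more transparent).

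Fix $m\in|N|$. Since $\nabla_M$ is a weight vector field, choose homogeneous local coordinates $(x^1,\dots,x^n)$ for $M$ around $m$ with $\nabla_M=\sum_i w_i\,x^i\,\pa_{x^i}$, $w_i\in\R$. Consider the inclusion of graded vector spaces $\sT_mN\hookrightarrow\sT_mM=\bigoplus_i\R\,\pa_{x^i}\big|_m$. Applying elementary linear algebra to the even and the odd part separately, there is a parity-compatible subset $I\subset\{1,\dots,n\}$ such that the coordinate projection restricts to an isomorphism $\sT_mN\xrightarrow{\ \sim\ }\bigoplus_{i\in I}\R\,\pa_{x^i}\big|_m$; equivalently, the functions $(x^i\big|_N)_{i\in I}$ have linearly independent differentials at $m$ and there are $\dim N$ of them. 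By the inverse function theorem for supermanifolds, $(u^i:=x^i\big|_N)_{i\in I}$ is then a local coordinate system on $N$ around $m$ (geometrically, $N$ is locally the graph over the coordinate subspace $\{x^j=0:\ j\notin I\}$). Using $x^i$ as an extension of $u^i$, tangency gives, for each $i\in I$,
$$\nabla_N(u^i)=(\nabla_M x^i)\big|_N=w_i\,x^i\big|_N=w_i\,u^i ,$$
so that $\nabla_N=\sum_{i\in I}w_i\,u^i\,\pa_{u^i}$ in these coordinates. Hence $(u^i)_{i\in I}$ are homogeneous coordinates on $N$, $\nabla_N$ is a weight vector field, and $(N,\nabla_N)$ is a homogeneity supermanifold; that $\nabla_N=\nabla_M\big|_N$ holds by construction.

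The proof is short precisely because one never has to ``straighten'' $N$ inside $M$: once the coordinates $(x^i)$ of $M$ are restricted along a well-chosen index set $I$, the identity $\nabla_N(u^i)=w_iu^i$ is forced by tangency. The one step deserving care — and the only place where a subtlety could hide — is the claim that such a restriction yields a coordinate system on $N$; this relies on the graded version of the fact that any subspace of a coordinate vector space is the graph of a linear map over some coordinate subspace (done for the even and odd summands separately, so that $I$ respects the $\Z_2$-grading), together with the inverse/implicit function theorem in the super setting. I do not expect any obstacle beyond this bookkeeping: in particular the same computation runs verbatim when $\nabla_M(m)\neq 0$, where it simply reproduces coordinates of the type occurring in Corollary \ref{cor}, so the case distinction one might anticipate is unnecessary. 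As a byproduct, if $M$ is a $\zG$-homogeneity supermanifold then so is $N$, since the induced coordinates $u^i$ inherit the weights $w_i\in\zG$.
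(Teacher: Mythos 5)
Your proof is correct and follows essentially the same route as the paper's: both select a subcollection of the ambient homogeneous coordinates $(x^i)$ whose restrictions form a coordinate system on $N$, and then use tangency of $\nabla_M$ to compute $\nabla_N(x^i\big|_N)=(\nabla_M x^i)\big|_N=w_i\,x^i\big|_N$. Your justification of the coordinate-selection step (parity-compatible choice of the index set $I$ plus the inverse function theorem) is a slightly more explicit version of what the paper asserts directly.
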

\begin{proof} Suppose that the total dimensions are: $\dim(N)=n$ and $\dim(M)=m$. For $m=n$ the proposition is obvious, so let us consider $m>n$.
There is a covering of $M$ with some coordinate neighbourhoods $U$ with coordinates $(y^1,\dots,y^m)$ such that
$N\cap U$ is defined by the system of equations $y^{n+1}=0,\dots,y^m=0$. Of course, these coordinates are not \emph{a priori} homogeneous. Making $U$ smaller if necessary, we can have on $U$ also homogeneous coordinates $(x^1,\dots,x^m)$, so that $\nabla_M=\sum_aw_a\cdot x^a\,\pa_{x^a}$. There are $n$ coordinates from $(x^a)$, say, $x^1,\dots,x^n$ such that their restrictions $\tilde x^1,\dots,\tilde x^n$ to $N$ give a system of coordinates on $U\cap N$. As functions in coordinates $y^a$, they read $\tilde x^i=x^i(y^1,\dots,y^n,0,\dots,0)$. But $x^i$ is homogeneous with weight $w_i$, so that
$$\nabla_M\big|_N(\tilde x^i)=\nabla_M\left(x^i(y^1,\dots,y^n,0,\dots,0)\right)=w_i\cdot x^i(y^1,\dots,y^n,0,\dots,0)=
w_i\cdot \tilde x^i\,.$$
Hence, $(\tilde x^1,\dots,\tilde x^n)$ are local coordinates in $N$ which are homogeneous with respect to $\nabla_M\big|_N$.

\end{proof}
\no It is easy to see the following.
\begin{proposition}
For each  homogeneity supermanifold $M$, the weight vector field $\nabla_M$ induces on the \emph{body} (called also the \emph{reduced manifold}) $|M|$ of $M$ a weight vector field $\nabla_{|M|}$, which for homogeneous local coordinates $(y^i,\zx^a)$($y^i$ even and $\zx^a$ odd) on $M$ takes the form
$$\nabla_{|M|}=\sum_{i}w_i\,y^i\pa_{y^i}$$
and makes $|M|$ into a purely even homogeneity manifold. In other words, $(|M|,\nabla_{|M|})$ is a homogeneous submanifold of $(M,\nabla_M)$.
\end{proposition}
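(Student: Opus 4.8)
The plan is to realize $|M|$ as a homogeneous submanifold of $(M,\nabla_M)$ and then to quote the preceding proposition on homogeneous submanifolds. First I would recall the standard fact that the body carries a canonical closed embedding $\iota\colon|M|\hookrightarrow M$ of supermanifolds: in any chart $U$ of $M$ with supercoordinates $(y^i,\zx^a)$ (the $y^i$ even, the $\zx^a$ odd) the image $\iota(|M|)\cap U$ is cut out by the equations $\zx^a=0$, and the functions $y^i$ restrict to coordinates on $|M|\cap U$. Thus $|M|$ is a submanifold of $M$ in the sense used in that proposition, and it is purely even by construction.

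Next I would check that $\nabla_M$ is tangent to $|M|$. Around any point of $|M|$ I take a homogeneity chart $U$ of $M$ with homogeneous supercoordinates $(y^i,\zx^a)$; by the Proposition realizing~(\ref{ew}) we then have
$$\nabla_M=\sum_i w_i\, y^i\,\pa_{y^i}+\sum_a w_a\,\zx^a\,\pa_{\zx^a}\,,$$
and this vector field is plainly tangent to $\{\zx^a=0\}=|M|\cap U$, because $\nabla_M(\zx^a)=w_a\,\zx^a$ lies in the ideal generated by the $\zx^b$. (One could even dispense with homogeneity at this step: every even vector field $X$ on $M$ is tangent to $|M|$, since $X(\zx^a)$ is an odd function and hence already a section of the ideal sheaf defining $|M|$.) Therefore $|M|$ is a homogeneous submanifold of $(M,\nabla_M)$.

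Finally I would invoke the preceding proposition on homogeneous submanifolds: $|M|$ is canonically a homogeneity supermanifold with weight vector field $\nabla_{|M|}=\nabla_M\big|_{|M|}$. Restricting the displayed expression to $\{\zx^a=0\}$ annihilates the terms $w_a\,\zx^a\,\pa_{\zx^a}$, while the $(y^i)$ become coordinates on $|M|\cap U$; this gives
$$\nabla_{|M|}=\sum_i w_i\, y^i\,\pa_{y^i}\,,$$
and, $|M|$ being purely even, exhibits it as a purely even homogeneity manifold. The closing clause of the statement is then just the observation that $(|M|,\nabla_{|M|})$ is precisely the homogeneous submanifold produced above. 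I do not expect any genuine obstacle here: the only step meriting a word of care is the classical fact that the body is a closed submanifold of $M$ to which even vector fields are automatically tangent, after which the assertion is a direct specialization of the proposition on homogeneous submanifolds.
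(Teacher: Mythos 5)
Your proof is correct and follows exactly the route the paper intends: the paper leaves this proposition unproved (``It is easy to see the following''), and its closing sentence already signals that the argument is to exhibit $|M|$ as the homogeneous submanifold $\{\zx^a=0\}$ (tangency being immediate since $\nabla_M(\zx^a)=w_a\,\zx^a$ lies in the defining ideal) and then apply the preceding proposition on homogeneous submanifolds, which is precisely what you do.
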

\noindent Let now $M_0\subset|M|$ be the set of zeros of the vector field $\nabla_{|M|}$.
\begin{proposition}
The set $M_0$ is, actually, a submanifold of the even manifold $|M|$, thus also an even submanifold of $M$.  The manifold $M_0$ is a homogeneous submanifold in $M$ with a trivial homogeneity structure.
\end{proposition}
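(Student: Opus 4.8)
The plan is to reduce everything to the local normal form provided by Proposition \ref{Sh} and then read off all three assertions directly in coordinates.

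First I would fix a point $m\in M_0$ and observe that $\nabla_{|M|}(m)=0$ is equivalent to $\nabla_M(m)=0$: the odd coordinates of any homogeneity chart vanish along the body, so the value of an even vector field at a point of $|M|$ only sees its even part, here $\nabla_{|M|}$. This is what lets me apply Proposition \ref{Sh} at a \emph{zero} of $\nabla_M$, obtaining homogeneous supercoordinates $(y^i,\zx^a)$ around $m$ with
$$\nabla_M=\sum_i w_i\,y^i\,\pa_{y^i}+\sum_a v_a\,\zx^a\,\pa_{\zx^a}.$$
Next I would split the even coordinates $(y^i)=(y^\za,z^j)$ according to whether $w_\za\ne 0$ or the weight is $0$, so that $\nabla_{|M|}=\sum_\za w_\za\,y^\za\,\pa_{y^\za}$. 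Then for a point $p$ of the chart, $\nabla_{|M|}(p)=0$ holds iff $w_\za y^\za(p)=0$ for all $\za$, i.e. (as $w_\za\ne 0$) iff $y^\za(p)=0$ for all $\za$; hence $M_0$ meets this chart exactly in the coordinate slice $\{y^\za=0\ \forall\za\}$, which contains $m$ because $\nabla_{|M|}(m)=0$ forces $y^\za(m)=0$. This yields the first claim: $M_0$ is a closed embedded submanifold of $|M|$. The second claim is then immediate, since $|M|$ is already a purely even submanifold of $M$ by the Proposition above, and a submanifold of a submanifold is a submanifold; so $M_0$ is an even submanifold of $M$.

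For the last claim I would view $M_0$ as the sub-supermanifold of $M$ cut out in the chart by the equations $y^\za=0$ (all $\za$) and $\zx^a=0$ (all $a$), and check tangency of $\nabla_M$: since $\nabla_M(y^\za)=w_\za y^\za$ and $\nabla_M(\zx^a)=v_a\zx^a$ lie in the ideal generated by those functions, $\nabla_M$ is tangent to $M_0$. Thus $M_0$ is a homogeneous submanifold and, by the Proposition on homogeneous submanifolds above, a homogeneity supermanifold with $\nabla_{M_0}=\nabla_M\big|_{M_0}$. Finally, restricting the local expression of $\nabla_M$ to $M_0$, i.e. putting $y^\za=0$ and $\zx^a=0$, leaves $\sum_j 0\cdot z^j\,\pa_{z^j}=0$; hence $\nabla_{M_0}$ vanishes identically, which is precisely the trivial homogeneity structure.

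I do not expect a genuine obstacle: once Proposition \ref{Sh} is in hand the whole statement is coordinate bookkeeping. The one point requiring a little care is the very first step — justifying $\nabla_{|M|}(m)=0\iff\nabla_M(m)=0$, so that the \emph{supermanifold} normal form (not merely its reduction) may be taken centred at $m$ — together with the routine remark that an identically vanishing weight vector field is exactly what ``trivial homogeneity structure'' means.
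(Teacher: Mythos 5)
Your proof is correct and follows essentially the same route as the paper: pass to local homogeneous coordinates, observe that $M_0$ is the coordinate slice $\{y^\za=0\ \text{for}\ w_\za\ne 0\}$ (together with $\zx^a=0$ inside $M$), and read off tangency and the vanishing of the restricted weight vector field. You merely make explicit a few points the paper leaves implicit (the equivalence $\nabla_{|M|}(m)=0\iff\nabla_M(m)=0$ and the tangency/restriction computation), which is fine.
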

\begin{proof}
The manifold $|M|$ is a purely even homogeneity manifold with the weight vector field written locally as $\nabla_{|M|}=\sum_{i}w_i\,y^i\,\pa_{y^i}$, where $(y^i)$ are even homogeneous local coordinates in $M$, so homogeneous coordinates in $|M|$. Hence, the submanifold $M_0$ of $|M|$ is described by the system of equations $y^i=0$ for that $ i$ for which $w_i\ne 0$. As a submanifold in $M$, the manifold $|M|$ is described in homogeneous coordinates $(y^i,\zx^a)$ by the system of equations $\zx^a=0$  and $y^i=0$ for those $i$ for which $w_i\ne 0$.

\end{proof}
\no Since nowhere-vanishing vector fields are automatically weight vector fields, $M_0$ may be empty.
A useful Lemma we will use many times is the following.
\begin{lemma}\label{ul}
Let $M$ be a homogeneity supermanifold, and let $(x^a)$ be local homogeneous coordinates with degrees $(\zl_a)$ in a neighbourhood of $m\in|M|$.
\begin{enumerate}
\item If $g$ is an even homogeneous function of weight 0, and $h:\R\to\R$ is smooth, then $f=h\circ g$ is also an even homogeneous function of weight 0.
\item If $f$ and $g$ are homogeneous functions of degrees $\zl_f$ and $\zl_g$, then $f\cdot g$ is homogeneous of degree $\zl_f+\zl_g$.
\item If $f$ is a homogeneous function of degree $\zl$, then $\pa_{x^k}(f)$ is of degree $\zl-\zl_k$.
\item If $f$ is a homogeneous even and positive function of weight $w$, then $f^v$ is homogeneous even and positive of weight $vw$ for any $v\in\R$.
\end{enumerate}
\end{lemma}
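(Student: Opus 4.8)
The plan is to derive all four items from the single structural fact that $\nabla_M$ is an \emph{even} vector field, hence an even derivation of the sheaf of superfunctions obeying the graded Leibniz rule (and the chain rule for composition with smooth functions of even arguments), together with the defining equivalence: a local function $f$ has weight $w$ exactly when $\nabla_M(f)=w\cdot f$, the parity being recorded separately so that the degree lies in $\Z_2\ti\R$.

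For item 2 I would just apply Leibniz:
$$\nabla_M(f\cdot g)=\nabla_M(f)\cdot g+f\cdot\nabla_M(g)=(\w(f)+\w(g))\,f\cdot g,$$
and since the parity of $f\cdot g$ is the sum of the parities, this gives $\deg(f\cdot g)=\zl_f+\zl_g$. For item 1, because $g$ is even the composite $h\circ g$ is a well-defined even local superfunction, and the chain rule yields $\nabla_M(h\circ g)=(h'\circ g)\cdot\nabla_M(g)=(h'\circ g)\cdot 0=0$, so $h\circ g$ is even of weight $0$. For item 4, since $f$ is even and positive its body is a positive function, so $f^v=\exp(v\ln f)$ is a well-defined even positive local superfunction for every $v\in\R$; applying the chain rule to $t\mapsto t^v$, smooth where the body of $f$ takes values, gives $\nabla_M(f^v)=v\,f^{v-1}\cdot\nabla_M(f)=v\,f^{v-1}\cdot(wf)=vw\cdot f^v$, hence $\w(f^v)=vw$ and $f^v$ is again even and positive.

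The only step that needs a moment's attention is item 3, and there I would work in the given homogeneous coordinates, where $\nabla_M=\sum_a w_a\,x^a\,\pa_{x^a}$, and compute the graded commutator $[\pa_{x^k},\nabla_M]$. Using $\pa_{x^k}(x^a)=\delta^a_k$, the graded Leibniz rule, and $\pa_{x^k}\pa_{x^a}=(-1)^{\zs_k\zs_a}\pa_{x^a}\pa_{x^k}$ (where $\zs_k$ denotes the parity of $x^k$), the sign factors cancel in pairs and one obtains $[\pa_{x^k},\nabla_M]=w_k\,\pa_{x^k}$, with no residual super sign because $\nabla_M$ is even and each $\pa_{x^k}$ is homogeneous. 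Consequently, for $f$ of weight $w$,
$$\nabla_M(\pa_{x^k}f)=\pa_{x^k}(\nabla_M f)-[\pa_{x^k},\nabla_M](f)=\pa_{x^k}(wf)-w_k\,\pa_{x^k}(f)=(w-w_k)\,\pa_{x^k}(f),$$
and since $\pa_{x^k}$ shifts parity by $\zs_k$, we get $\deg(\pa_{x^k}f)=\zl-\zl_k$.

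I do not expect a genuine obstacle: the \emph{mild} one is purely notational, namely keeping the super-sign bookkeeping straight in item 3 and invoking the standard fact that smooth functions of even superfunctions — and real powers of positive ones — are again superfunctions on which $\nabla_M$ acts by the chain rule. Everything else is a one-line application of the Leibniz or chain rule.
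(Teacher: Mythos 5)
Your proposal is correct and follows essentially the same route as the paper: Leibniz for the product, the chain rule (written in coordinates in the paper) for items 1 and 4, and the commutator identity $[\pa_{x^k},\nabla_M]=w_k\,\pa_{x^k}$ for item 3. The only difference is that you make the super-sign cancellation in the commutator and the well-definedness of $f^v$ explicit, which the paper leaves implicit.
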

\begin{proof}
For the first statement, we calculate
$$\nabla_M(f)(x)=\sum_a w_a\cdot x^a\,\frac{\pa f}{\pa x^a}(x)=\sum_a h'(g(x))\cdot w_a\cdot x^a\cdot\frac{\pa g}{\pa x^a}(x)=h'(g(x))\cdot\nabla_M(g)(x)=0\,.$$
The second statement follows immediately from the Leibniz rule.
As for the third one, we have
$$w\cdot\pa_{x^k}(f)=\pa_{x^k}(\nabla_M(f))=\nabla_M(\pa_{x^k}(f)+[\pa_{x^k},\nabla_M](f)=
\nabla_M(\pa_{x^k}(f))+w_k\cdot\pa_{x^k}(f)\,,$$
so
$$\nabla_M(\pa_{x^k}(f))=(w-w_k)\cdot\pa_{x^k}(f)\,.$$
And finally,
$$\nabla_M(f^v)=v\cdot f^{v-1}\cdot \nabla_M(f)=v\cdot f^{v-1}\cdot w\cdot f=(v\,w)\cdot f^v\,.$$

\end{proof}

\mn The notion of the weight of homogeneity can be easily extended from just functions to arbitrary tensor fields by using the Lie derivative $\Ll_{\nabla_M}$.
\begin{definition} Let $\n$ be a weight vector field on a supermanifold $M$. We say that a (local) tensor field $K$ is of weight $w\in\R$ if
$$\Ll_{\nabla_M}(K)=w\cdot K.$$
If additionally $K$ has a defined parity, say $\zs$, then we call it \emph{homogeneous of degree $\zl=(\zs,w)\in\Z_2\ti\R$}.
\end{definition}
\begin{remark}\label{rw} \textbf{(Flows)}
It is not widely known that the tangent bundle $\sT M$ of a supermanifold $M$, \emph{via} the functor of points, can be identified with the first jet bundle of smooth curves in $M$, i.e., smooth morphisms $\zg:\R\to M$ (see \cite{Bruce:2014}). Moreover, every even vector field $Y$ induces a flow $\Exp(t\,Y)$ of local diffeomorphisms of $M$ \cite[Chapter V]{Tuynman:1983} such that
$$\left.\frac{\rmd}{\rmd s}\right|_{s=0}\Exp(s Y)(x)=Y(x)\,.$$
For a tensor field $K$, the Lie derivative $\Ll_Y(K)$ can be understood as the infinitesimal action of the one-parameter group $\Exp(sY)$ of local diffeomorphisms on the tensor $K$:
$$\left.\frac{\rmd}{\rmd s}\right|_{s=0}\Exp(s Y)^\star (K)=\Ll_Y K\,,$$
where $\Exp(s Y)^\star$ denotes the action (pullback) of the local diffeomorphism $\Exp(s Y)$ on tensor fields.
More precisely, $\Exp(s Y)^\star (K)$ is the standard pullback if $K$ is covariant, and it is understood as
$\Exp(-sY)_* (K)$ if $K$ is contravariant, extended then in an obvious way to all tensor fields.
Therefore, the Lie derivative $\Ll_Y K$ is well defined for any tensor field $K$, not only for even or odd $K$. Moreover, the bracket $[Y,X]$ of vector fields is well defined, although vector fields on supermanifolds do not generally represent (graded) derivations in the algebra $\cA(M)$ of superfunctions on $M$, which is sometimes stated in textbooks in a misleading way. In the case of the weight vector field $Y=\nabla_M$ (see (\ref{ew})), the corresponding flow reads
\be
g_s(x)=\Exp(s \nabla_M)(x)=\left(e^{w_i\cdot s}\cdot x^i\right)\,,
\ee
for $s$ sufficiently close to 0, with `sufficiently' depending on the neighbourhood in the body part of homogeneous coordinates.
It is easy to see that a function $f$ is homogeneous of weight $w$ if and only if $f\circ g_s=e^{w\cdot s}f$.
It will be convenient to change the parameter $s\in\R$ into the parameter $t=e^s>0$ and put
$$h_t(z)=(t^{w_i}\cdot z^i)\,.$$
Then, $h_t\circ h_u=h_{tu}$, and the condition for homogeneity of $f(z)$ with weight $w$ is
\be f\circ h_t=t^w\cdot f\,.\ee
The vector field is \emph{complete} if $h_t$ is globally defined for all $t>0$.
\end{remark}
\begin{corollary}
A weight vector field $\n$ is complete if and only if $\n$ is the generator of a smooth action $h:\R_+\ti M\to M$ of the multiplicative group $\R_+$ of positive reals on $M$, in the sense that
$$\left.\frac{\rmd}{\rmd t}\right|_{t=1}h_t=\n.$$
\end{corollary}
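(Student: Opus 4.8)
The plan is to exploit the reparametrization recorded in Remark \ref{rw}: the flow $g_s=\Exp(s\nabla_M)$ of the even vector field $\nabla_M$ and the family $h_t:=g_{\log t}$, which in local homogeneous coordinates is $h_t(z)=(t^{w_i}z^i)$, are related by $t=\rme^s$, and by definition $\nabla_M$ is complete precisely when $h_t$ (equivalently $g_s$) is globally defined for all $t>0$ (equivalently all $s\in\R$). So the corollary is essentially a restatement of this change of variable, and I would organize it as two implications.

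First I would prove ``$\nabla_M$ complete $\Rightarrow$ action''. Assuming $\nabla_M$ complete, set $h(t,x)=h_t(x):=g_{\log t}(x)$ for $t>0$. From the one-parameter group property $g_s\circ g_u=g_{s+u}$, $g_0=\id_M$ of the flow of an even vector field on a supermanifold (see \cite{Tuynman:1983}), one gets $h_1=\id_M$ and $h_t\circ h_u=g_{\log t+\log u}=g_{\log(tu)}=h_{tu}$, so $h$ is an action of the group $(\R_+,\cdot)$, each $h_t$ being a diffeomorphism with inverse $h_{1/t}$. Joint smoothness of $h:\R_+\ti M\to M$ follows from the joint smoothness of $(s,x)\mapsto g_s(x)$ precomposed with the smooth reparametrization $t\mapsto\log t$, and the chain rule gives
$$\left.\frac{\rmd}{\rmd t}\right|_{t=1}h_t(x)=\left.\frac{\rmd}{\rmd s}\right|_{s=0}g_s(x)\cdot\left.\frac{\rmd\log t}{\rmd t}\right|_{t=1}=\nabla_M(x),$$
so $\nabla_M$ is the generator of $h$.

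For the converse, suppose $h:\R_+\ti M\to M$ is a smooth $(\R_+,\cdot)$-action with $\frac{\rmd}{\rmd t}\big|_{t=1}h_t=\nabla_M$. Put $\phi_s:=h_{\rme^s}$; then $\phi_0=h_1=\id_M$ and $\phi_s\circ\phi_u=h_{\rme^s}\circ h_{\rme^u}=h_{\rme^{s+u}}=\phi_{s+u}$, so $\phi$ is a smooth one-parameter group of diffeomorphisms of $M$, defined for all $s\in\R$. Its generator at $s=0$ is $\frac{\rmd}{\rmd s}\big|_{s=0}\phi_s=\nabla_M$, and then the group law yields $\frac{\rmd}{\rmd s}\phi_s(x)=\nabla_M(\phi_s(x))$ for every $s$, so $s\mapsto\phi_s(x)$ is the integral curve of $\nabla_M$ through $x$. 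By uniqueness of flows for even vector fields on supermanifolds, $\phi_s=\Exp(s\nabla_M)=g_s$, which is therefore globally defined for all $s\in\R$; equivalently $h_t=g_{\log t}$ is globally defined for all $t>0$, i.e. $\nabla_M$ is complete.

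The only point where the supermanifold setting genuinely enters, rather than the argument being a formality, is the input from Remark \ref{rw}: the existence, joint smoothness, one-parameter group property and uniqueness of the flow $\Exp(s\nabla_M)$ of an even vector field, together with the description of $\sT M$ via jets of curves $\R\to M$ through the functor of points (\cite{Bruce:2014,Tuynman:1983}). Granting that calculus, the proof is nothing more than the elementary substitution $t=\rme^s$ converting the additive flow of $\nabla_M$ into the multiplicative $\R_+$-action $h$, and I expect no further obstacle.
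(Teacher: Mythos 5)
Your proof is correct and follows exactly the route the paper intends: the corollary is stated without proof as an immediate consequence of Remark \ref{rw}, where the reparametrization $t=\rme^s$ relating $\Exp(s\nabla_M)$ to $h_t$ and the definition of completeness in terms of $h_t$ are already set up, and your two implications simply spell out that correspondence. No gaps.
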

\no Our method of defining graded manifold \emph{via} homogeneity structures is very effective and leads immediately to a concept of multi-homogeneity structures.
\begin{definition}
A \emph{multi-homogeneity structure} on a supermanifold $M$ is a finite sequence $(\n_1,\dots,\n_k)$ of weight vector fields on $M$ which are \emph{compatible}, i.e., they commute pairwise, $[\n_i,\n_j]=0$. In this case, we call it a \emph{$k$-tuple homogeneity structure}. Tensor fields which are homogeneous with respect to each $\n_i$ with weight $w_i$ we call \emph{homogeneous} with $k$-weight $(w_1,\dots,w_k)\in\R^k$. Morphisms of $k$-tuple homogeneity manifolds are morphisms of supermanifolds relating $\n_i^1$ with $\n_i^2$ for all $i$.
\end{definition}
\no Any $k$-tuple homogeneity structure we will view as an $\R^k$-grading. Of course, in practice, some parts are $\Z$ or $\N$-gradings. A particularly important feature for the geometry of vector bundles is the $\N\ti\R$-grading.
\begin{example} Consider on $M=\R^p\ti\R^q$ with coordinates $(x^a,y^i)$ compatible weight vector fields
$$\n_1=\sum_iy^i\pa_{y^i},\quad \n_2=\sum_aw_ax^a\pa_{x^a}+\sum_iv_iy^i\pa_{y^i},$$
where $w_a,v_i\in \Z$. This gives an $\N\ti\Z$-gradation on $M$. The $\N$-part of the gradation is responsible for the obvious trivial vector bundle structure on $M$,
$$\zt:\R^p\ti\R^q\to\R^q.$$
\end{example}

\section{Examples}
\begin{example} \textbf{(Trivial example)} Any supermanifold $M$ is a homogeneity supermanifold with the trivial (zero) weight vector field.
\end{example}
\begin{example} \textbf{(Vector superbundles)} Consider the trivial vector superbundle of rank $(n|m)$ over a supermanifold $M$, $E=M\ti\R^{n|m}$, with the obvious surjective submersion $\zt:E\to M$. If $(y^a,\zh^j)$ are the standard linear coordinates in $\R^{n|m}$, then
$$\n_\cE=\sum_ay^a\pa_{y^a}+\sum_j\zh^j\pa_{\zh^j}$$
is a weight vector field on $E$, called the \emph{Euler vector field}, and the coordinates $(y^a,\zh^j)$ are all of weight 1. Hence, $E$ is canonically a homogeneity manifold, and $M$ is a submanifold in $E$ defined as the zero locus of all 1-homogeneous functions.
For $M$ being a single point, we get a canonical homogeneity structure on the supermanifold $\R^{n|m}$. Homogeneity supermanifolds of this type we call \emph{linear supermanifolds}.

If $F=N\ti\R^{p|q}$ is another trivial vector superbundle, with projection $\zp:F\to N$, then VB-morphisms $\zF:E\to F$ are just morphisms of these homogeneity manifolds. They automatically induce morphisms $\zf:M\to N$ of supermanifolds such that the diagram
$$
\xymatrix@C+20pt@R+10pt{
E \ar[r]^{\zF}\ar[d]_{\zt} & F\ar[d]^{\zp} \\
M \ar[r]^{\zf} & N}
$$
is commutative.

\mn Now, a general \emph{vector superbundle} can be described as a fiber bundle $\zt:E\to M$, equipped with an atlas of local trivializations $(U_a,\zf_a)$,
$$\zf_a:V_a=\zt^{-1}(U_a)\to U_a\ti\R^{n|m},$$
such that the transition maps
$$\zf_b\circ\zf_a^{-1}:(U_b\cap U_a)\ti\R^{n|m}\to(U_b\cap U_a)\ti\R^{n|m}$$
are automorphisms of the homogeneity manifolds $(U_b\cap U_a)\ti\R^{n|m}$. Consequently, the local weight vector fields $\n_a$ give rise to a globally defined weight vector field $\n_E$ on $E$, called the \emph{Euler vector field}, so $E$ is canonically a homogeneity manifold, with $M$ embedded as a submanifold being the zero-locus
of all 1-homogeneous functions.

\mn Note that the weight vector field $\n$ is in this case complete, and the corresponding flow $\zf_t$ of automorphisms of the homogeneity manifold $E$ is the multiplication by $s=e^t\in\R_+$ in $\R^{n|m}$. An important observation is that this $\R_+$-action on $E$ extends canonically to a smooth action of the monoid $(\R,\cdot)$ of multiplicative reals. This is exactly the effective way of defining vector (super)bundles \cite{Grabowski:2012,Jozwikowski:2016} without any reference to the addition in fibers.
\end{example}

\begin{example} \textbf{(Homogeneity spheres)}
Consider $\R^{n+1}$ with the standard coordinates $(x_0,x_1,\dots,x_n)$, and the 1-sphere $S^n\subset\R^{n+1}$ defined as the submanifold of points satisfying the equation $\sum_{i=0}^nx_i^2=1$. Let us distinguish on $S^n$ two charts, $\zf_i:U_i\to\R^n$, where $U_i$ consists of points of $S^n$ such that $x_0=(-1)^i$, $i=0,1$, associated with the stereographic projections from the north $(1,0,\dots,0)$  and the south pole $(-1,0,\dots,0)$,
$$X_k=[\zf_0(x)]_k=\frac{x_k}{1-x_0}\,,\quad Y_k=[\zf_1(x)]_k=\frac{x_k}{1+x_0}\,,\ k+1,\dots,n\,.$$
It is easy to see that the transition map is
$$\zf_{10}=\zf_1\circ\zf_0^{-1}\,:\,(\R^n)^\ti=\R^n\setminus\{ 0\}\to (\R^n)^\ti=\R^n\setminus\{ 0\}\,,\quad
Y=\zf_{10}(X)=\frac{1}{\sum_{k=1}^nX_k^2}\,X\,.$$
In other words, $Y_k=X_k/s^2$, where $s^2=\sum_{k=1}^nX_k^2$. Suppose now we declare the weight 1 for all coordinates $X_k$ and the weight $-1$ for all coordinates $Y_k$. In that case, the transition map preserves the weights and the weight vector field $\nabla$ on $S^n$, which is $\nabla=\sum_kX_k\pa_{X_k}$ in coordinates $(X_k)$ and $\nabla=-\sum_kY_k\pa_{Y_k}$ in coordinates $(Y_k)$, is well defined and introduces a homogeneity structure on $S^n$, which is therefore a compact homogeneity manifold.

\mn A particular case is that of a circle. With respect to the standard parametrization of the unit circle in $\R^2$, $t\mapsto\big(\cos(t),\sin(t)\big)$, and the poles being $p_\mp=(\pm 1,0)$, the above weight vector field $\n$ on $S^1$ reads $\n=\sin(t)\pa_t$. On $S^1\setminus\{p_+\}$, the coordinate change $t=2\arctan(1/x)$ makes $\n$ equivalent to the vector field $x\pa_x$ on $\R$, where homogeneous functions are homogeneous polynomials having weights $\le 0$. On the other hand, on $S^1\setminus\{p_-\}$ the vector field $\n$ is equivalent to $-y\pa_y$ on $\R$, where homogeneous functions are homogeneous polynomials having weights $\le 0$. Hence, global homogeneous functions on $S^1$ are only constants.
\end{example}

\begin{example}\textbf{(Graded bundles)} The \emph{graded bundles} of Grabowski and Rotkiewicz \cite{Grabowski:2012} (see also \cite{Bruce:2015,Bruce:2015a,Bruce:2016,Bruce:2017a}) are purely even manifolds
with an action of the monoid of multiplicative reals, $h_t:F\to F$, with $h_1=\id_F$ and $h_t\circ h_s=h_{ts}$. Somehow unexpectedly, it leads to a structure of a homogeneity manifold on $F$ as follows. It is easily seen that $h_0$ is a surjective submersion of $F$ onto the submanifold $M=h_0(F)$.  One proves that it is, in fact, a fiber bundle with the typical fiber $\R^p$ for some $p$ and there is an atlas of local trivializations $h_0^{-1}(U)\simeq U\ti\R^p$, with coordinates $(x^a)$ on $M$ and linear coordinates $y^i$ on $\R^p$, such that the $\R$-action looks like
$$h_t(x^a,y^i)=(x^a,t^{w_i}y^i)\,,$$
where $w_i$ are non-negative integers. The weight vector field $\nabla_F$ is the generator of the one-parameter group of diffeomorphisms $h_t$, $t>0$. Clearly, $\nabla=\sum_iw_i\cdot y^i\pa_{y^i}$. The transition functions are automatically polynomial in $y^i$, usually non-linear, so $F$ is generally not a vector bundle. This is therefore a purely even version of a graded manifold of Voronov \cite{Voronov:2002}, however, defined in terms of an action of $(\R,\cdot)$ and not with homogeneous coordinates from the very beginning. One can introduce analogously also N-manifolds (see \cite{Jozwikowski:2016}), as supermanifolds equipped with an $(\R,\cdot)$-action $h$ such that $h_{-1}$ acts as the parity operator.
\end{example}

\begin{example} \textbf{(Homogeneity superspheres)}
Consider $M=\R^{3|2}$ with coordinates $(x_0,x_1,x_2,\zx,\zh)$, and the submanifold $S^{2|2}$ in $M$ defined by the equation
$$x_0^2+x_1^2+x_2^2-\zx\cdot\zh=1$$
(the supersphere). Similarly to above, define two open submanifolds $U_\pm$ in $S^{2|2}$ defined by $x_0\ne\pm 1$. We can consider $U_\pm$ as charts with coordinates $\zf_\pm:U_\pm\to\R^{2|2}$ given by
$$\phi_\pm((x_0,x_1,x_2,\zx,\zh)=(X_1^\pm,X_2^\pm,\zx^\pm,\zh^\pm)\,,$$
where
$$X_i^\pm=\frac{x_i}{\sqrt{1+\zx\,\zh}\pm x_0}\quad \text{and}\quad (\zx^\pm,\zh^\pm)=(\zx,\zh)\,,\ i=1,2\,.$$
By direct calculations, one gets
$$X_i^+=\frac{X_i^-}{(X_1^-)^2+(X_2^-)^2}\,,\ i=1,2\,,$$
so the weight vector fields $\nabla^\pm$ on $U^\pm$ read
$$\nabla^\pm=\mp\left(X^\pm_1\pa_{X^\pm_1}+X^\pm_2\pa_{X^\pm_2}\right)+w\,\zx^\pm+v\,\zh^\pm\,,$$
where $w,v$ are arbitrary real numbers, giving rise to a globally defined weight vector field $\nabla$ on  the supersphere $S^{2|2}$.
\end{example}

\begin{example}\label{ex1} \textbf{(Linear homogeneity supermanifolds)}
Consider the supermanifold $M=\R^{k|l}$ with the standard global coordinates $(y^a,\zh^j)$, where $y^a$ are even and $\zh^j$ are odd, and a multi-index
$$\zm=(w_1,\dots,w_k,v_1,\dots,v_l)\in\R^{k}\ti\R^l\,.$$
The weight vector field on $M$,
\be\label{zl}\nabla_\zm=\sum_{a=1}^kw_a\cdot y^a\,\pa_{y^a}+\sum_{j=1}^lv_j\cdot\zh^j\,\pa_{\zh^j}\,,\ee
defines a \emph{linear homogeneity structure of type $\zm$} on $\R^{k|l}$, denoted $\R^{k|l}(\zm)$. If all weights $(w_a,v_j)$ are positive, we call the linear homogeneity supermanifold \emph{positive}.
\end{example}

\begin{example}\label{hps} \textbf{(Projective homogeneity manifold)} Take a linear homogeneity manifold $M=\R^{n|m}(\zm)$, coordinates $(y^i,\zx^a)$, $n>0$, and $\zm=(w_1,\dots,w_n,v_1,\dots,v_m)$ (cf. Example \ref{ex1}).
Let $(\R^{n|m}(\zm))^\ti$ be the open submanifold of $\R^{n|m}(\zm)$ characterized by  $y\ne 0$.
We define the \emph{projective homogeneity supermanifold} $\mathbb{P}\R^{(n-1)|m}(\zm)$ as the manifold of cosets of $(\R^{n|m}(\zm))^\ti$ with respect to the equivalence relation
\be\label{id}(y^1,\dots,y^n,\zx^1,\dots,\zx^m)\sim(t\cdot y^1,\dots,t\cdot y^n,t\cdot\zx^1,\dots,t\cdot\zx^m)\,,\ee
where $t\ne 0$. This space can also be understood as the set of orbits of the obvious action of the group $\R^\ti$ of multiplicative reals on $(\R^{n|m}(\zm))^\ti$, so that the canonical projection
\be\label{proj}\zt:\R^{n|m}(\zm)^\ti\to\mathbb{P}\R^{(n-1)|m}(\zm)\ee carries a structure of an $\R^\ti$-principal bundle.

Let us denote the class of coordinates $(y^i,\zx^a)$ with respect to the identification (\ref{id}) with $[(y^i,\zx^a)]$. The projective space $\mathbb{P}\R^{n|m}(\zm)$ is canonically a homogeneity supermanifold, covered by coordinate neighbourhoods $U_i=\{[(y,\zx)]\,|\,y^i\ne 0\}$, $i=1,\dots,n$,
with coordinates  $\zf_i:U_i\to\R^{(n-1)|m}(\zm_i)$,
$$\zf_i:[(y^1,\dots,y^n,\zx^1,\dots,\zx^m)]\mapsto\left(z^1_i=\frac{y^1}{y^i},\dots,\widehat{y^i},\dots,
z^{n}_i=\frac{y^n}{y^i},\zvy^1_i=\frac{\zx^1}{y^i},\dots,\zvy^m_i=\frac{\zx^m}{y^i}\right)\,,$$
where `$\widehat{\ \ }$' stands for omission. Here, the coordinates $(z_i^1,\dots,z_i^{i-1},z_i^{i+1},\dots,z_i^n)$ and $(\zvy_i^1,\dots,\zvy_i^m)$ have weights
$$(w_1-w_i,\dots,w_{i-1}-w_i,w_{i+1}-w_i,\dots,w_n-w_i)\quad\text{and}\quad (v_1-w_i,\dots,v_m-w_i)\,,$$
respectively.
The transition functions are (for $i<j$):
\begin{align}\nn &\zf_j\circ\zf_i^{-1}\Big(z^1_i,\dots,z^{i-1}_i,z^{i+1}_i,\dots,z^n_i,\zvy^1_i,\dots,\zvy^m_i\Big)=\\
&\left(\frac{z^1_i}{z^j_i},\cdots,\frac{1}{z^j_i},\dots,\frac{z^{j-1}_i}{z^j_i},\frac{z^{j+1}_i}{z^j_i},
\dots\frac{z^{n}_i}{z^j_i},\frac{\zvy^1_i}{z^j_i},\dots,\frac{\zvy^m_i}{z^j_i}\right)\,.\nn
\end{align}
It is now clear that the transition functions are smooth and transform homogeneous coordinates into homogeneous coordinates, so $\mathbb{P}\R^{(n-1)|m}(\zm)$ is a  homogeneity supermanifold. It is also easily seen that its body is the projective space $\R\mathbb{P}^{(n-1)}$.
\end{example}
\section{Tangent and cotangent lifts of homogeneity}
In this section, we define canonical homogeneity supermanifold structures on the tangent $\sT M$ and the cotangent $\sT^*M$ bundles of a homogeneity supermanifold $(M,\n)$. In local homogeneous coordinates, we mimic the tangent and phase lifts from the purely even situation \cite{Bruce:2016,Grabowski:2009,Grabowski:2012}.

\begin{theorem}\label{e1} (\textbf{Lifts of homogeneity structures})
Let $M$ be a homogeneity supermanifold with the weight vector field $\nabla_M$, which in homogeneous coordinates reads
$$\nabla_M=\sum_aw_a\cdot x^a\pa_{x^a}\,.$$
Then, $\sT M$ and $\sT^*M$ are canonically homogeneity vector superbundles with the weight vector fields written in the adapted coordinates as
\be\label{tl}\dt\nabla_M=\sum_aw_a\left(x^a\pa_{x^a}+\dot x^a\pa_{\dot x^a}\right)\ee
and
\be\label{cl}\dts\nabla_M=\sum_aw_a\left(x^a\pa_{x^a}-p_a\pa_{p_a}\right)\,.\ee
In particular, if $M$ is a $\zG$-homogeneity supermanifold for some $\zG\subset\R$, then $\sT M$ is also a $\zG$-homogeneity supermanifold, and $\sT^*M$ is a $(\pm\zG)$-homogeneity supermanifold.
\end{theorem}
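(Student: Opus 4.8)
The plan is to identify $\dt\nabla_M$ and $\dts\nabla_M$ with the canonical \emph{complete (tangent) lift} and \emph{phase (cotangent) lift} of the globally defined even vector field $\nabla_M$, and then to read off homogeneity directly from the adapted coordinates. Recall (cf.\ Remark \ref{rw}) that $\nabla_M$ generates a flow $g_s=\Exp(s\nabla_M)$ of local diffeomorphisms of $M$, given in homogeneous coordinates by $g_s(x)=(e^{w_a s}x^a)$. Applying the tangent functor $\sT$ to each $g_s$ produces a flow $\sT g_s$ of local diffeomorphisms of $\sT M$, and applying the cotangent functor $\sT^*$ (the inverse-transpose of the fibrewise Jacobian) produces a flow $\sT^* g_s$ of $\sT^*M$. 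These flows are well defined precisely because $\nabla_M$ is, and because $\sT$ and $\sT^*$ are functorial on supermanifolds; their infinitesimal generators are by definition the even vector fields $\dt\nabla_M$ and $\dts\nabla_M$.

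First I would compute these generators in adapted coordinates. In the induced coordinates $(x^a,\dot x^a)$ on $\sT M$ the lifted flow reads $\sT g_s\colon(x^a,\dot x^a)\mapsto(e^{w_a s}x^a,\,e^{w_a s}\dot x^a)$, since the Jacobian of $g_s$ is $\mathrm{diag}(e^{w_a s})$; differentiating at $s=0$ gives (\ref{tl}). In the induced coordinates $(x^a,p_a)$ on $\sT^*M$ the inverse-transpose of $\mathrm{diag}(e^{w_a s})$ is $\mathrm{diag}(e^{-w_a s})$, so $\sT^* g_s\colon(x^a,p_a)\mapsto(e^{w_a s}x^a,\,e^{-w_a s}p_a)$, and differentiating gives (\ref{cl}); equivalently, $\dts\nabla_M$ is the Hamiltonian vector field, with respect to the canonical symplectic form on $\sT^*M$, of the fibrewise-linear function $\sum_a w_a x^a p_a$. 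Here odd coordinates cause no trouble: the fibre coordinate $\dot x^a$ (resp.\ $p_a$) has the same parity as $x^a$, the canonical symplectic form on $\sT^*M$ remains even, and both generators are even.

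Then I would conclude. Since the lifts are canonical, the globally defined $\nabla_M$ yields globally defined even vector fields $\dt\nabla_M$ on $\sT M$ and $\dts\nabla_M$ on $\sT^*M$, and the bundle projections relate them to $\nabla_M$; formulas (\ref{tl}) and (\ref{cl}) show that the adapted coordinates $(x^a,\dot x^a)$, resp.\ $(x^a,p_a)$, are homogeneous, with weights $(w_a,w_a)$, resp.\ $(w_a,-w_a)$. Hence $\dt\nabla_M$ and $\dts\nabla_M$ are weight vector fields. Moreover the fibrewise Euler vector fields $\sum_a\dot x^a\pa_{\dot x^a}$ of $\sT M$ and $\sum_a p_a\pa_{p_a}$ of $\sT^*M$ are diagonal in the adapted coordinates, hence commute with $\dt\nabla_M$ and $\dts\nabla_M$ respectively — conceptually, because $\sT g_s$ and $\sT^* g_s$ act fibrewise linearly — and the adapted coordinates are bi-homogeneous for the two commuting structures, so $\sT M$ and $\sT^*M$ are indeed homogeneity vector superbundles. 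Finally, if $M$ is a $\zG$-homogeneity supermanifold, an allowed atlas on $M$ induces an atlas on $\sT M$ with coordinate weights in $\zG$ (the $\dot x^a$ inherit the weights $w_a$) and on $\sT^*M$ with coordinate weights in $\zG\cup(-\zG)$, i.e.\ in $\pm\zG$.

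The only genuine work is supergeometric bookkeeping: checking that the complete and phase lifts are chart-independent in the presence of odd coordinates (this is exactly the naturality of $\sT$ and $\sT^*$ for supermanifolds) and tracking parity and sign conventions in the cotangent functor and the canonical symplectic form. No real obstacle arises, since all transition maps of $M$ respect $\nabla_M$ by hypothesis and the lift operations are natural, so the induced transition maps on $\sT M$ and $\sT^*M$ automatically respect $\dt\nabla_M$ and $\dts\nabla_M$. An alternative more in the paper's spirit is to bypass functoriality and verify directly that a homogeneous change of coordinates on $M$ induces changes on $\sT M$ and $\sT^*M$ preserving the local forms (\ref{tl}) and (\ref{cl}); this is routine given $\dot{\bar x}^a=\sum_b(\pa\bar x^a/\pa x^b)\dot x^b$ and the dual transformation $\bar p_a=\sum_b(\pa x^b/\pa\bar x^a)p_b$.
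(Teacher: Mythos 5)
Your proposal is correct, and it proves everything the theorem asserts. The paper's own proof is a single sentence: a direct check that the transition functions of $\sT M$ and $\sT^*M$ respect the local forms (\ref{tl}) and (\ref{cl}) — precisely the "alternative" you sketch in your last paragraph \emph{via} $\dot{\bar x}^a=\sum_b(\pa\bar x^a/\pa x^b)\dot x^b$ and $\bar p_a=\sum_b(\pa x^b/\pa\bar x^a)p_b$. Your primary route is different in emphasis: you derive $\dt\nabla_M$ and $\dts\nabla_M$ as generators of the lifted flows $\sT\Exp(s\nabla_M)$ and the inverse-transpose flow on $\sT^*M$, and obtain global well-definedness from the naturality of these lifts rather than from a chart-by-chart computation. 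This buys a cleaner conceptual explanation of \emph{why} the local expressions glue (the lift of a global vector field is global), and it connects directly to the integral versions $\Exp(t\,\dt Y)=\sT\Exp(t\,Y)$ and $\Exp(t\,\dts Y)=(\sT\Exp(-t\,Y))^*$ that the paper records only after the theorem; the paper's direct check, by contrast, requires no flow machinery and works verbatim in the supermanifold setting without worrying about existence of flows. Your observation that the adapted coordinates are bi-homogeneous for $\dt\nabla_M$ (resp. $\dts\nabla_M$) together with the Euler vector field, which is what "homogeneity vector superbundle" requires by the paper's definition, is a point the paper leaves implicit, and it is worth making explicit as you do. The weight bookkeeping for the $\zG$ and $\pm\zG$ claims is also correct.
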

\begin{proof}
The proof is a direct check that the transition functions in the tangent and the cotangent bundles respect the local forms of the above vector fields.

\end{proof}
\no Let us observe that the lifted weight vector fields $\dt\nabla_M$ and $\dts\nabla_M$ are particular cases of the complete lifts of vector fields on $M$ to the tangent $\sT M$ and cotangent $\sT^*M$ bundle, as defined in the literature in the purely even case (e.g., \cite{Grabowski:1995,Yano:1973}).

\medskip
For the weight vector field $\nabla_M=\sum_aw_a\cdot x^a\pa_{x^a}$ we get
$\dt\nabla_M$ as in Theorem \ref{e1}. It is easily seen that $\dt Y$ is even for even $Y$.

\medskip
An intrinsic construction of $\dt Y$ in \cite{Grabowski:1995} looks as follows.
The vector field $X$ corresponds to a linear function $\zi_X$ on $\sT^*M$. The differential of this function corresponds, in turn, to a function $\zi_{\xd\zi_X}$ on $\sT\sT^*M$ which is linear with respect to both vector bundle structures. But, there is a canonical isomorphism of double vector bundles $\ze_M:\sT^*\sT M\to\sT\sT^*M$, discovered in 70's by Tulczyjew \cite{Tulczyjew:1974}. Hence, $\zi_{\xd\zi_X}\circ \ze$ is a linear function on $\sT^*\sT M$, thus corresponds to a (linear) vector field $\dt Y$ on $\sT M$. There is also an algebroid version of the tangent lift of vector fields \cite{Grabowski:1997,Grabowski:1999}.

The cotangent lift $\dts Y$, in turn, is sometimes defined as the Hamiltonian vector field on $\sT^*M$ with the Hamiltonian being the linear function $H=\zi_Y$ on $\sT^*M$ corresponding to the vector field $Y$. We propose the following simple definitions.
\begin{definition} Let $Y$ be a vector field on a supermanifold $M$. The \emph{tangent lift} $\dt Y$ is a vector field on $\sT M$ uniquely determined by the identity
\be\label{tl1} \dt Y(\zi_\za)=\zi_{\Ll_Y\za}\,,\ee
and the \emph{cotangent lift} $\dts Y$ is a vector field on $\sT^*M$ uniquely determined by the identity
\be\label{cl1} \dts Y(\zi_X)=\zi_{[Y,X]}\,,\ee
where $\za$ is an arbitrary 1-form, and $X$ is an arbitrary vector field on $M$.
\end{definition}
The tangent lift $\dt Y$ of a vector field $Y=\sum_af^a(x)\pa_{x^a}$ on $M$ is given in the adapted local coordinates $(x^a,\dot x^b)$ on $\sT M$ by (see \cite{Grabowski:1995} for a purely even version and cf. (\ref{Ll}))
\be\label{lift1} \dt Y=\sum_a\left(f^a(x)\pa_{x^a}+\left(\sum_b\dot x^b\frac{\pa f^a}{\pa x^b}(x)\right)\,\pa_{\,\dot x^a}\right)\,.\ee

\mn The cotangent lift $\dts Y$ of a vector field $Y=\sum_af^a(x)\pa_{x^a}$ of parity $\zs$ on $M$  is given in adapted local coordinates $(x^a,p_b)$ on $\sT^*M$ by
\be\label{lift2}
\dts Y=\sum_a\left(f^a(x)\pa_{x^a}-(-1)^{\zs\cdot\zs_a}\left(\sum_bp_b\frac{\pa f^b}{\pa x^a}(x)\right)\pa_{p_a}\right)\,.
\ee
The integral version of (\ref{tl1}), valid only for even vector fields, reads (cf. Remark \ref{rw})
$$
\Exp(t\cdot\dt Y)=\sT\Exp(t\,Y)\,.
$$
This means that $\dt Y$ is the generator of the local one-parameter group of diffeomorphisms $\sT\Exp(t\,Y)$.
The integral version of (\ref{cl1}) for even $Y$ reads, in turn,
$$ \Exp(t\,\dts Y)=\left(\sT\Exp(-t\,Y)\right)^*\,,$$
where $\left(\sT\Exp(t\,Y)\right)^*:\sT^*M\to \sT^*M$ is the local automorphism of the vector superbundle $\sT^*M$ which is dual to the local isomorphism $\sT\Exp(t\,Y):\sT M\to \sT M$.

\mn Let us observe that, due to the fact that transition functions for $\sT M$ are linear in coordinates $(\dot x^a)$, the shift of their degrees by $\zl\in\Z_2\ti\R$,  $\deg(\dot x^a)\mapsto\deg(\dot x^a)+\zl$, is consistent, so it defines a new homogeneity supermanifold structure on $\sT M$. This homogeneity supermanifold we denote $\sT[\zl]M$. Similarly, we can proceed with the cotangent bundle and get $\sT^*[\zl]M$.
The linear coordinates in $\sT[\zl]M$ and $\sT^*[\zl]M$ are therefore of degrees $\deg(\dot x^a)=\deg(x^a)+\zl$ and $\deg(p_a)=-\deg(x^a)+\zl$, respectively. The dual of $\sT[\zl]M$ is in a natural sense $\sT^*[-\zl]M$, so the canonical pairing,
$$\la\cdot,\cdot\ran:\sT[\zl]M\ti_M\sT^*[-\zl]M\to\R,$$
is a morphism of homogeneity supermanifolds.
Note that $\sT[(1,0)]M$ and $\sT^*[(1,0)]M$ are often denoted with $\zP\sT M$ and $\zP\sT^*M$.
The tangent and cotangent lifts of $\n_M$ to $\sT[\zl]M$ and $\sT^*[\zl]M$ have formally the same form as  (\ref{tl}) and (\ref{cl}), only the degrees of $\dot x^a$ and $p_a$ are shifted.

\mn Vector fields $X$ and one-forms $\za$ on $M$, that are homogeneous of degree $\zl$, induce morphisms of the homogeneity manifolds $X^\sharp:M\to\sT[\zl]M$ and $\za^\sharp:M\to\sT^*[\zl]M$, usually understood as sections. In local homogeneous coordinates, in which $X=\sum_af^a(x)\,\pa_{x^a}$ and $\za=\sum_a\xd x^a\,g_a(x)$, this sections viewed as maps look like
$$X^\sharp(x^a)=\big(x^a,f^b(x)\big)\,,\quad\text{and}\quad\za^\sharp(x^a)=\big(x^a,g_b(x)\big)\,.$$
It is easy to check that they are indeed morphisms of homogeneity supermanifolds. Note also that the formulae (\ref{tl}) and (\ref{cl}) define also weight vector fields on $\sT[\zl]M$ and $\sT^*[\zl]M$ for each $\zl\in\Z_2\ti\R$.
\section{Homogeneous distributions and codistributions}
Let $M$ be a supermanifold of total dimension $n$. By a \emph{distribution} $D$ of corank $k$ on $M$ (and rank $(n-k)$) we will understand a vector super subbundle in $\sT M$ of corank $k$, i.e., a vector subbundle spanned locally by $(n-k)$ nowhere-vanishing vector fields $X_1,\dots,X_{n-k}$. These vector fields are therefore linearly independent in the sense that they form a local basis of a locally free module over $\cA(M)$.

An equivalent definition is that $D$ is a submanifold in $\sT M$ determined locally by the system of equations $\zi_{\za_1}=0,\zi_{\za_2}=0,\dots,\zi_{\za_k}=0$, where $(\za_i)$ is a system of locally nowhere-vanishing linear ($1$-homogeneous with respect to the Euler vector field) functions, i.e., nowhere-vanishing differential one-forms which are linearly independent in the above sense. These one-forms span therefore a vector subbundle in $\sT^*M$ (\emph{codistribution}) of rank $k$ (and corank $(n-k)$) which we call the \emph{annihilator} of $D$ and denote $D^o$. It is the submanifold in $\sT^*M$ defined by equations $\zi_{X_1}=0,\dots,\zi_{X_{n-k}}=0$. It is clear that $(D^o)^o=D$ in the obvious sense.

Let us observe that it makes sense to speak about even or odd distributions (codistributions) as generated locally by even or odd vector fields (one-forms). Indeed, suppose a rank $k$ distribution is locally freely generated by even vector fields, that in local coordinates $(x^a)$ on $M$ read $X_i=\sum_af^a_i(x)\pa_{x^a}$, $i=1,\dots,k$. We will show that there are no free generators of $D$ containing an odd vector field, say, $Y$. Being one of the free generators, $Y$ is nowhere vanishing and can be written in the form $Y=\sum_ig^i(x)\,X_i$. As $Y$ is odd and $X_i$ even, the functions $g^i$ must be odd, so they all vanish on the submanifold $|M|$; a contradiction. If $X_i$ are odd and $Y$ is even, then again $g^i$ must be odd, so vanishing on $|M|$. Analogously for codistributions. This makes the following definition meaningful.
\begin{definition}
A distribution $D$ we call \emph{even} (resp., \emph{odd}) if it is locally freely generated by even (resp, odd) vector fields. Similarly, we define even and odd codistributions.
\end{definition}
\no Let $D$ be a distribution on a supermanifold $M$. It is easy to see that we have
\be\label{ann1} X\in D \ \Leftrightarrow\ \zi_X\big|_{D^o}=0\quad\text{and}\quad \za\in D^o\ \Leftrightarrow\ \zi_\za\big|_D=0\,.
\ee
These identities we will use in the proof of the following proposition.
\begin{proposition}
Let $Y=\sum_af^a(x)\pa_{x^a}$ be an even vector field on a supermanifold $M$ of dimension $r$, and let $D$ be a rank $k$ distribution on $M$. Then, the following are equivalent:
\begin{description}
\item{(1)} The tangent lift $\dt Y$ is tangent to the submanifold $D\subset\sT M$.
\item{(2)} $D$ is invariant with respect to the local one-parameter group of diffeomorphisms $\Exp(t\,\dt Y)$ of $\sT M$.
\item{(3)} The cotangent lift $\dts Y$ is tangent to the annihilator vector subbundle $D^o$ in $\sT^*M$ (see Lemma \ref{l2}).
\item{(4)} $D^o$ is invariant with respect to the local one-parameter group of diffeomorphisms $\Exp(t\,\dts Y)$ of $\sT^*M$ (see Lemma \ref{l2}).
\item{(5)} If $X_1,\dots,X_k$ are local vector fields on $M$ locally generating $D$, then
$$\Ll_Y(X_i)=[Y,X_i]=\sum_{j=1}^k f^j_i(x)X_j$$ for some functions $f^j_i$ on $M$. In other words, $\Ll_Y(X)\in D$ if $X\in D$.
\item{(6)} If $\za^1,\dots,\za^{(r-k)}$ are local one-forms on $M$ which locally generate $D^o$, then
$$\Ll_Y(\za^i)=\sum_{j=1}^{(r-k)} g_j^i(x)\za^j$$ for some functions $g_j^i$ on $M$. In other words,
$\Ll_Y(\za)\in D^o$ if $\za\in D^o$.
\end{description}
\end{proposition}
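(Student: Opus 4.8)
The plan is to prove the six conditions equivalent by pairing them off, using three ingredients: the integral versions of the lifts recorded above, the defining identities (\ref{tl1}) and (\ref{cl1}) for $\dt Y$ and $\dts Y$, and the annihilator identities (\ref{ann1}). Throughout, the hypothesis that $Y$ is even is what makes everything work: it ensures $\dt Y$ and $\dts Y$ are even and hence generate genuine local one-parameter groups $\Exp(t\,\dt Y)$, $\Exp(t\,\dts Y)$, and that the Leibniz rule for $\Ll_Y$ carries no signs.

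First I would dispatch (1)$\Leftrightarrow$(2) and, by the same argument, (3)$\Leftrightarrow$(4). By the integral formulas above, $\Exp(t\,\dt Y)=\sT\Exp(t\,Y)$ and $\Exp(t\,\dts Y)=(\sT\Exp(-t\,Y))^{*}$ are vector-superbundle automorphisms of $\sT M$, resp.\ $\sT^{*}M$, covering $\Exp(t\,Y)$, so each maps vector sub-bundles to vector sub-bundles. Then $D$ (resp.\ $D^{o}$) is stable under the flow for all small $t$ if and only if the generating even vector field is tangent to it; this is the standard submanifold-invariance fact, and differentiating at $t=0$ passes between the two formulations. Here Lemma \ref{l2} is what justifies treating $D^{o}\subset\sT^{*}M$ as a locally closed sub-supermanifold, so that ``tangent'' is meaningful.

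Next, (1)$\Leftrightarrow$(6) and, symmetrically, (3)$\Leftrightarrow$(5). Write $D$ as the submanifold of $\sT M$ cut out by $\zi_{\za^{1}}=\cdots=\zi_{\za^{r-k}}=0$, where $\za^{1},\dots,\za^{r-k}$ locally generate $D^{o}$. Tangency of $\dt Y$ to $D$ amounts to $\dt Y(\zi_{\za^{i}})\big|_{D}=0$ for all $i$; by (\ref{tl1}) the left-hand side is $\zi_{\Ll_Y\za^{i}}\big|_{D}$, which by the second identity of (\ref{ann1}) vanishes on $D$ exactly when $\Ll_Y\za^{i}\in D^{o}$. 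Because $\Ll_Y(g\,\za)=Y(g)\,\za+g\,\Ll_Y\za$ and the $\za^{i}$ generate the module $D^{o}$, this for all generators is the same as $\Ll_Y\za\in D^{o}$ for every $\za\in D^{o}$, i.e.\ (6). The equivalence (3)$\Leftrightarrow$(5) is the identical argument with $\sT^{*}M$ in place of $\sT M$, the local generators $X_{i}$ of $D$ in place of the $\za^{i}$, identity (\ref{cl1}) (so $\dts Y(\zi_{X_{i}})=\zi_{[Y,X_{i}]}$) in place of (\ref{tl1}), and the first identity of (\ref{ann1}) in place of the second.

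Finally, to join the two blocks $\{(1),(2),(6)\}$ and $\{(3),(4),(5)\}$ I would verify (5)$\Leftrightarrow$(6) directly. For $\za\in D^{o}$ and $X\in D$ we have $\langle\za,X\rangle=0$, so applying the (sign-free) Leibniz rule for $\Ll_Y$ gives $\langle\Ll_Y\za,X\rangle=-\langle\za,[Y,X]\rangle$; if (5) holds the right-hand side vanishes for every $X\in D$, hence $\Ll_Y\za\in D^{o}$, and the converse follows from the same identity using $(D^{o})^{o}=D$. I do not expect a genuine obstacle here: the mathematical content sits entirely in the already-available identities (\ref{tl1}), (\ref{cl1}), (\ref{ann1}). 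The one place requiring care --- the ``main obstacle'' --- is the supergeometric bookkeeping: checking that $D$ and $D^{o}$ are locally closed sub-supermanifolds of the expected ranks, and that (\ref{tl1}), (\ref{cl1}) apply verbatim to odd one-forms $\za$ and odd vector fields $X$ when $Y$ is even. This is routine precisely because $Y$ carries no parity, so the purely even arguments transcribe without sign corrections.
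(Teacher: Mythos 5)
Your proof is correct, and it rests on exactly the same ingredients as the paper's: the flow--tangency equivalence for $(1)\Leftrightarrow(2)$ and $(3)\Leftrightarrow(4)$, the identity $\dt Y(\zi_\za)=\zi_{\Ll_Y\za}$ together with (\ref{ann1}) for the tangent-side link, and the Leibniz rule applied to $\la\za,X\ran=0$ for $(5)\Leftrightarrow(6)$. The only structural difference is how you join the tangent and cotangent halves of the equivalence graph: the paper closes the loop with $(2)\Leftrightarrow(4)$, observing that the dual of the linear automorphism $\sT\Exp(t\,Y)$ is $\Exp(-t\,\dts Y)$, so $D$ is flow-invariant iff $D^o$ is; you instead prove $(3)\Leftrightarrow(5)$ as the verbatim cotangent analogue of $(1)\Leftrightarrow(6)$, using (\ref{cl1}) and the first identity of (\ref{ann1}), and then pass through $(5)\Leftrightarrow(6)$. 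Both closures are equally short; the paper's duality argument avoids repeating the tangency computation on $\sT^*M$, while yours avoids invoking the dual-flow identity and keeps the two bundles entirely symmetric. One small point worth making explicit in your $(6)\Rightarrow(1)$ direction (the paper does flag it): vanishing of $\dt Y(\zi_{\za^i})$ on $D$ for the finitely many defining functions $\zi_{\za^i}$ suffices for tangency because these linear functions, together with the basic ones, cut out $D$ regularly, i.e.\ the family $\{\zi_\za\,|\,\za\in D^o\}$ is rich enough to detect tangency to the submanifold $D\subset\sT M$.
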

\begin{proof}
The equivalences $(1)\,\Leftrightarrow\,(2)$ and $(3)\,\Leftrightarrow\,(4)$ are standard.
As $\Exp(t\,\dt Y)=\sT\Exp(t\,Y)$, the local linear diffeomorphism $\sT\Exp(t\,Y)$ preserves $D$. But this is equivalent to the fact that its dual $(\sT\Exp(t\,Y))^*=\Exp(-t\,\dts Y)$ preserves $D^o$, so $(2)\ \Leftrightarrow\ (4)$.
We can easily prove also $(5)\,\Leftrightarrow\,(6)$. As we have $\la\za^i,X_j\ran=0$,
$$0=\Ll_Y\la\za^i,X_j\ran=\la\Ll_Y(\za^i),X_j\ran+\la\za^i,\Ll_Y(X_j)\ran\,.$$
Then, $\la\za^i,\Ll_Y(X_j)\ran=0$ if and only if  $\la\Ll_Y(\za^i),X_j\ran=0$.

To show $(1)\,\Rightarrow\,(6)$, take a one-form $\za\in D^o$. According to (\ref{ann1}), this implies $\zi_\za\big|_D=0$. By (1), $\dt Y$ is tangent to $D$, so that $[\dt Y(\zi_\za)]\big|_D=0$. But $\dt Y(\zi_\za)=\zi_{\Ll_Y(\za)}$, so, again by (\ref{ann1}), $\zi_{\Ll_Y(\za)}\big|_D=0$, i.e., $\Ll_Y\za\in D^o$. It remains to show $(6)\ \Rightarrow\ (1)$. A vector field is tangent to the submanifold $D\subset\sT M$ if and only if it differentiates local smooth functions vanishing on $D$ into local smooth functions vanishing on $D$. From the previous reasoning, we know that $\zi_\za$ vanish on $D$ for all $\za\in D^o$ and that $\dt Y(\zi_\za)\big|_D=0$. It is easy to see that the family of functions $\{\zi_\za\,|\,\za\in D^o\}$ is reach enough to assure that $\dt Y$ is tangent to $D$.

\end{proof}
\no Now, we will pass to distributions on homogeneity supermanifolds.
\begin{definition} Suppose that $D$ is a distribution of rank $(n-k)$ (resp., a codistribution of rank $k$) on a homogeneity supermanifold $M$ of total dimension $n$ and weight vector field $\nabla_M$. We call $D$ a \emph{homogeneous distribution} (\emph{homogeneous codistribution}) if the tangent lift $\dt\nabla_M$ (resp., the cotangent lift $\dts\nabla_M$) is tangent to $D$.
\end{definition}
\no Note that there is no relevant concept of the degree of a homogeneous distribution (codistribution) $D$, as shown in the following example.
\begin{example}
Let us consider $\R^3$ with the standard coordinates $(x,y,z)$ and the weight vector field $\nabla=\pa_x+\pa_y$.
It is indeed a weight vector fields, because for $w\in\R$, $w\ne 0$, the new coordinates $(x',y',z')=(e^{w\cdot x},e^{-w\cdot y},z)$ are global homogeneous coordinates with $\deg(x')=(0,w)$, $\deg(y')=(0,-w)$, and $\deg(z')=(0,0)$. It is clear that the one-form on $\R^3$,
$$\za=\xd z'+x'\,\xd y'=\xd z+e^{w\cdot x}\,\xd e^{-w\cdot y}\,,$$ has weight 0 and is nowhere vanishing, thus generates a rank 1 codistribution $[\za]$. Hence,
$\za'=e^{\zb x}\,\za$ is a one-form of weight $\zb$. Moreover, $\za'$ is another generator of $[\za]$.
\end{example}

\begin{theorem}\label{homdist}
Let $D\subset\sT M$ be a distribution on a homogeneity manifold $(M, \nabla)$. Then, $D$ is  homogeneous if and only if $D$ is locally generated by homogeneous vector fields.
\end{theorem}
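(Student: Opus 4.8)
The plan is to prove the two implications separately, the ``only if'' direction being the one with content. First I would reformulate homogeneity: by the definition preceding the statement, $D$ is homogeneous exactly when $\dt\nabla_M$ is tangent to $D\subset\sT M$, and by the equivalence of conditions (1) and (5) in the proposition above (with $Y=\nabla_M$), this is the same as $[\nabla_M,\Gamma(D)]\subseteq\Gamma(D)$, i.e.\ $\Ll_{\nabla_M}$ mapping the $C^\infty(M)$-module of local sections of $D$ into itself. The easy implication is then immediate: if $X_1,\dots,X_{n-k}$ is a local frame of $D$ with $\Ll_{\nabla_M}(X_i)=w_iX_i$, then $\Ll_{\nabla_M}\big(\sum_i f^iX_i\big)=\sum_i\big(\nabla_M(f^i)+w_if^i\big)X_i\in\Gamma(D)$, so $D$ is homogeneous.

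For the converse I would fix $m\in|M|$ and produce a homogeneous frame of $D$ near $m$, distinguishing two cases. If $\nabla_M(m)\ne0$, use Corollary \ref{cor} to obtain coordinates $(t,z^2,\dots,z^n)$ with $\nabla_M=t\,\pa_t$, $t(m)=1$, $z^j(m)=0$, and flow $\phi_s(t,z)=(st,z)$. Since $\dt\nabla_M$ is tangent to $D$, the submanifold $D$ is invariant under $\sT\phi_s$ for $s$ near $1$. I would pick any local frame $e_1,\dots,e_{n-k}$ of the restriction $D|_{\{t=1\}}$ over the slice $N=\{t=1\}$ and transport it along the flow, setting $X_i(t,z):=\sT\phi_t\big(e_i(1,z)\big)$; a short computation in these coordinates gives $\Ll_{\nabla_M}(X_i)=0$, so the $X_i$ are homogeneous of weight $0$, they lie in $\Gamma(D)$ by $\sT\phi_s$-invariance, and they restrict to the $e_i$ along $\{t=1\}$, hence frame $D$ near $m$. (By Proposition \ref{Sh} one may instead arrange any prescribed weights.)

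In the remaining case $\nabla_M(m)=0$, write $\nabla_M=\sum_a w_ax^a\pa_{x^a}$ with $x^a(m)=0$ and let $N=\{x^a=0:\ w_a\ne0\}$, the locus on which $\nabla_M$ vanishes identically. Because $\nabla_M(f)|_N=0$ for every function $f$, the assignment $X\mapsto\Ll_{\nabla_M}(X)|_N$ is $C^\infty(N)$-linear and defines a bundle endomorphism $B$ of $\sT M|_N$ with $B(\pa_{x^a}|_N)=-w_a\,\pa_{x^a}|_N$; thus $B$ is fibrewise diagonalisable with constant spectrum, giving a smooth decomposition $\sT M|_N=\bigoplus_c E_c$, $E_c=\Span\{\pa_{x^a}|_N:\ -w_a=c\}$, into constant-coefficient homogeneous subbundles. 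As $\Ll_{\nabla_M}$ preserves $\Gamma(D)$, the subbundle $D|_N$ is $B$-invariant, so $D|_N=\bigoplus_c\big(D|_N\cap E_c\big)$ fibrewise; the ranks of the summands are upper semicontinuous and sum to the constant $\rk D$, hence are locally constant, so each $D|_N\cap E_c$ is a subbundle. Within each $E_c$ I would choose a complementary subbundle $C_c$ to $D|_N\cap E_c$; its local frame consists of vector fields $\sum_{-w_a=c}\psi_a(x^0)\,\pa_{x^a}|_N$ with coefficients depending only on the weight-zero coordinates $x^0$. Extending these coefficients constantly in the remaining coordinates produces, near $m$, homogeneous vector fields of weight $c$ whose span $C$ is a homogeneous subbundle with $\rk C=\dim M-\rk D$ and $C|_m\oplus D|_m=\sT_mM$; since transversality is an open condition, $\sT M=D\oplus C$ on a neighbourhood of $m$. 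Finally, the projection $\sT M\to D$ along $C$ commutes with $\Ll_{\nabla_M}$, so it sends the homogeneous frame $(\pa_{x^a})$ of $\sT M$ to homogeneous local generators of $D$, out of which a homogeneous frame near $m$ is extracted by independence at $m$.

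The point I expect to require the most care is the vanishing case, and within it the decisive idea is to homogenise $D$ by first building a homogeneous \emph{complement} rather than by trying to render a chosen frame of $D$ itself homogeneous directly: the latter would lead to solving $\Ll_{\nabla_M}-c$ along flow lines, with genuine resonance/small-divisor difficulties once $\nabla_M$ carries weights of both signs, whereas a complement need only stay transverse to $D$, which is automatic near $m$. The technical lemmas that must be checked along the way are that $B=\Ll_{\nabla_M}|_N$ is a well-defined semisimple bundle endomorphism, that $D|_N$ decomposes into honest subbundles (the upper-semicontinuity-plus-constant-sum argument), and that the radially-constant extension of the complement is still homogeneous and transverse to $D$; none of these is deep, but together they are the heart of the proof. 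One could, if desired, also merge the two cases by treating the non-vanishing directions via the same ``transport along the flow'' device used for $\nabla_M(m)\ne0$.
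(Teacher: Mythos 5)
Your proof is correct, and the easy implication together with the case $\n(m)\ne 0$ (transporting a frame from a transversal slice along the flow of $\n$) is exactly the paper's argument. In the vanishing case, however, you take a genuinely different route. The paper stays entirely at the level of a single adapted frame: after noting that $D_{x_0}$ is a graded subspace of $\sT_{x_0}M$ spanned by, say, $\pa_{x^1},\dots,\pa_{x^k}$, it normalizes a local frame of $D$ to the echelon form $\hat X_i=\pa_{x^i}+\sum_{j>k}f^j_i(x)\,\pa_{x^j}$ and then observes that the invariance relation $[\n,\hat X_i]=\sum_l g^l_i\hat X_l$, compared in the first $k$ components, forces $g^l_i=-w_i\zd^l_i$ and hence $[\n,\hat X_i]=-w_i\hat X_i$ outright. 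In particular, your concern that homogenizing a frame of $D$ directly would run into resonance problems is unfounded: the echelon normalization makes the chosen frame homogeneous with no ODE to solve. Your alternative — restricting $\Ll_{\n}$ to a well-defined semisimple endomorphism $B$ of $\sT M|_N$ over the zero locus $N$, decomposing $D|_N$ into eigencomponents of locally constant rank, building a homogeneous complement $C$ by radially constant extension, and pushing the coordinate frame into $D$ by the resulting $\Ll_{\n}$-equivariant projector — is sound (the commutation of the projector with $\Ll_{\n}$ does follow, since both $D$ and $C$ are $\Ll_{\n}$-invariant on sections), and it buys something the paper's argument does not: a global-along-$N$ homogeneous splitting $\sT M=D\oplus C$ and an equivariant projection that homogenizes arbitrary sections of $D$, not just one frame. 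The price is the extra bundle-theoretic bookkeeping you yourself flag (well-definedness of $B$, the semicontinuity-plus-constant-sum argument for the ranks of $D|_N\cap E_c$), all of which checks out but none of which is needed for the paper's two-line computation.
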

\begin{proof}
As $\dt \nabla$ is a linear vector field on $\sT M$, it maps basic functions into basic functions, and the part `only if' is obvious.
Suppose now that the total dimension of $M$ is $n$, and $D$ is homogeneous of rank $k$, so that $[\n, X]\in D$ for every $X\in D$, and let $m\in |M|$.

\medskip\noindent Suppose now that $\n(m)\ne 0$. Let $D$ be trivializable over a neighbourhood $U$ of $m$, and let $S$ be a submanifold of $U$ transversal to $\n$ such that $m\in |S|$, and $X_i$, $i=1,\dots,k$, be local free generators of $D$ over $S$. If $\varphi^t$ is a local flow of $\n$ in a neighbourhood of $m$, then $\sT\varphi^t$ is a local flow of $\dt\n$, and there are unique local extensions $\hat X_i$ of $X_i$, $i=1,\dots,k$, which are $\sT\varphi^t$-invariant, $(\sT\varphi^t)_*(\hat X_i)=\hat X_i$. Since $D$ is $\sT\varphi^t$-invariant ($\dt \nabla$ is tangent to $D$), $\hat X^i$, $i=1,\dots,k$, form a local basis of generators of $D$ in a neighbourhood of $m$. Of course, $\hat X_i$ are $\n$-homogeneous of degree 0, $[\n,\hat X_i]=0$.

\medskip\noindent
If, in turn, $\n(m)=0$, then we have locally homogeneous coordinates $(x^a)$ in a neighbourhood of $x_0$, so that
$$\n=\sum_iw_ix^i\pa_{x^i},$$
for some $w_i\in\R$, and $\sT_{m}M$ is $\dt\n$-invariant, thus a graded vector space spanned by homogeneous vectors $\pa_{x^i}$. As $D_{m}$ is a graded subspace, it is also spanned by some of these homogeneous vectors, say, $\pa_{x^i}$ with $i=1,\dots,k$.

Consider a local basis $\hat X_i$, $i=1,\dots,k$, of $D$ such that $\hat X_i(m)=\pa_{x^i}$. Since sections of $D$ form a $C^\infty(M)$-module, we can additionally assume that
\be\label{gener}\hat X_i=\pa_{x^i}+\sum_{j=k+1}^n f^j_i(x)\pa_{x^j},\ee
where $f^j_i(m)=0$. In other words, $\hat X_i=\pa_{x^i}+Y_i$, where $Y_i$ is a linear combination of $\pa_{x^j}$, $j>k$, and $Y_i(m)=0$. As $D$ is $\dt\n$-invariant, we have
$$[\n,\hat X_i]=-w_i\pa_{x^i}+[\n,Y_i],$$
where, again, $[\n,Y_i]$ is a linear combination of $\pa_{x^j}$, $j>k$.
On the other hand, $[\n,X_i]$ is a linear combination of $X_l$, $l=1,\dots,k$, so
$$[\n,\hat X_i]=-w_i\hat X_i,$$
and $\hat X_i$, $i=1,\dots,k$, are homogeneous.

\end{proof}
\no A full analog of the Frobenius Theorem for supermanifolds is well known (see, e.g., \cite[Theorem 6.3]{Tuynman:1983} or \cite{Shander:1983}).
We will now prove its homogeneous version.
\begin{theorem}[Homogeneous Frobenius Theorem]\label{frob}
Let $D\subset\sT M$ be a rank-$k$ involutive homogeneous distribution on a homogeneity supermanifold $(M, \nabla)$ of total dimension $n$, and let $m\in|M|$. Then, there is a neighbourhood of $m$ endowed with a system $(x^i)$, $i=1,\dots,n$, of homogeneous local coordinates such that
\begin{description}
\item{(a)} $D$ is locally spanned by $\la\pa_{x^1},\dots,\pa_{x^k}\ran$, if $\n(m)=0$;
\item{(b)} $\n=x^n\pa_{x^n}$, and $D$ is locally spanned by  $\la\pa_{x^1},\dots,\pa_{x^k}\ran$, or by  $\la\pa_{x^1},\dots,\pa_{x^{k-1}},Y\ran$, where
$$Y=\n+\sum_{j=k}^{n-1}h_j(x^k,\dots,x^{n-1})\pa_{x^j},$$
in the case $\n(m)\ne 0$.
\end{description}
\end{theorem}
\begin{proof}
Suppose first that $\n(m)=0$, Like in the proof of Theorem \ref{homdist}, we can choose local homogeneous coordinates $(x^i)$ around $m$ such that $D$ is locally generated by the vector fields
(\ref{gener}). If $D$ is involutive, $[X_i,X_j]\in D$ for $i,j=1,\dots,k$. But the latter brackets are linear combinations of $\pa_{x^l}$, $l>k$, which belong to $D$, so they are linear combinations of $X_s$, $s=1,\dots,k$. Consequently, $[X_i,X_j]=0$. In particular, those $X_i$ which are odd are integrable. Now, we can start with coordinates $(x^j)$, $j=k+1,\dots,n$ and follow the proof of \cite[Proposition 6.1]{Tuynman:1983} using commuting flows of the vector fields $X_i$, $i=1,\dots,k$ (flows with an odd `time' in the case of integrable odd vector fields) to complete the variables $(x^j)$, $j=k+1,\dots,n$ with variables $y^i$, $i=1,\dots,k$, vanishing at $m$ such that $X_i=\pa_{y^i}$. Obviously, $y^i$ are homogeneous. The existence of flows follows, e.g., from the `straightening theorem' \cite{Shander:1980}.

\medskip\noindent If, in turn, $\n(m)\ne 0$, we may choose a neighbourhood $U$ of $m$ and local coordinates $(x,z^1,\dots,z^{n-1})$ in $U$ such that $x$ is even, $x(m)=1$, $z^j(m)=0$, and $\nabla=x\,\pa_x$ (Corollary \ref{cor}). We can assume additionally that $D$ is trivializable over $U$, and $X_i$, $i=1,\dots,k$, are local free generators of $D$ over $U$. We know from the proof of Theorem \ref{homdist} that we can choose generators $X_i$ to be of degree 0.
Let $S$ be a 1-codimensional submanifold of $U$ defined by the condition $x=1$. If all $X_i(m)$ are tangent to $S$, then we can choose $z^j$ such that $X_i(m)=\pa_{z^i}$, $i=1,\dots,k$, and we can proceed as in the previous case.

\mn If there is $X_i(m)$ not tangent to $S$ at $m$, say, $X_k(m)\notin\sT_mS$, then
$$X_k=f(x,z)\n+\sum_{j=1}^{n-1}g_j(x,z)\pa_{z^j},$$
for some functions $f,g_j$, $f(m)\ne 0$. As $[\n,X_k]=0$ ($X_k$ is of degree $0$), then $f,g_j$ do not depend on the variable $x$. Hence, the vector field
\be\label{dd}\hat X_k=\n+\sum_{j=1}^{n-1}h_j(z)\pa_{z^j},\ee
where $h_j=g_j/f$ is of degree $0$ and belongs to $D$.
Now, we can use $X_k$ to kill the parts containing $\n$ in $X_1,\dots,X_{k-1}$, and reduce the situation to the case when $X_i$, $i<k$, depend on coordinates $(z^j)$ only. They are of degree $0$ and span an involutive distribution of rank $(k-1)$. This is a pure supermanifold case (the degrees of $z^j$ are $0$), so we can assume that this distribution is generated by $\la\pa_{z^1},\dots,\pa_{z^{k-1}}\ran$. Consequently, $D$ is spanned by $\la\pa_{z^1},\dots,\pa_{z^{k-1}},\hat X_k\ran$, where $\hat X_k$ is of the form (\ref{dd}). Moreover, we can have $h_j=0$ for $j=1,\dots,k-1$, and, from involutivity, that $h_j$ do not depend on $z^a$ for $a<k$. In other words, we can view $Y=\hat X_k$ as an even vector field of degree $0$
in coordinates $(x,z^k,\dots,z^{n-1})$,
$$Y=x\pa_x+\sum_{j=k}^{n-1}h_j(z^k,\dots,z^{n-1})\pa_{z^j}.$$

\end{proof}
\section{Homogeneity vector superbundles}
Suppose that on a supermanifold $M$ we have two weight vector fields $\n$ and $\n'$. A natural definition of their compatibility is the following.
\begin{definition}
Weight vector fields on a supermanifold $M$ we call \emph{compatible} if there exists an atlas on $M$ with local coordinates being \emph{bi-homogeneous}, i.e., homogeneous with respect to both homogeneity structures. In this case, we call the triple $(M,\n,\n')$ a \emph{double homogeneity manifold}. If a function $f$ on $E$ is homogeneous of degree $w$ with respect to $\n$ and degree $w'$ with respect to $\n'$, then we call $f$ \emph{homogeneous of bi-degree $(w,w')$}.

\mn A \emph{homogeneity vector superbundle} is a double homogeneity manifold $(E,\n,\n')$ in which $\n'$ is the Euler vector field of a vector bundle structure $\zt:E\to M$ on $E$.
\end{definition}
Our approach to homogeneity vector superbundles is natural and simple; we do not need to use cocycles with values in supergroups, etc. Moreover, our approach is universal and works well also for other classes of graded manifolds, e.g., for $\Z^n_2$-manifolds \cite{Bruce:2025}. Like in the case of \emph{double vector bundles} \cite{Grabowski:2009}, one can expect that the compatibility condition for the homogeneity vector superbundle can be characterized simply by the commutation of the weight vector fields. This is an open question.
\begin{conjecture}\label{conjecture} A vector bundle structure on a homogeneity manifolds $(E,\n)$ is compatible if and only if $[\n,\n_E]=0$, where $\n_E$ is the Euler vector field.
\end{conjecture}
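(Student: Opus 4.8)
Suppose first that $(E,\n,\n_E)$ is a double homogeneity manifold. In a bi\-/homogeneous chart both vector fields take the diagonal linear form $\n=\sum_\za w_\za z^\za\pa_{z^\za}$ and $\n_E=\sum_\za w'_\za z^\za\pa_{z^\za}$, and any two \emph{even} diagonal linear vector fields written in the same coordinates commute; hence $[\n,\n_E]=0$. This uses nothing beyond the definition, so the implication ``compatible $\Rightarrow[\n,\n_E]=0$'' is immediate.

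\textbf{Reduction of the hard implication.} Assume now $[\n,\n_E]=0$; I want bi\-/homogeneous coordinates near every $e_0\in|E|$. Since $\n_E$ is complete and generates the $(\R_+,\cdot)$\-action $h$ of fibrewise multiplication (extending to the monoid $(\R,\cdot)$), the relation $[\n,\n_E]=0$ says that $\n$ is $\n_E$\-homogeneous of weight $0$: $(h_t)_*\n=\n$, and, as in the vector\-/bundle discussion, its local flow $\Exp(s\n)$ then consists of local vector\-/bundle automorphisms of $\zt\colon E\to M$, so $\n$ is $\zt$\-related to a vector field $\n_M$ on $M$. Because $M$ is the zero locus of the $\n_E$\-degree\-$1$ functions and $\n$ preserves $\n_E$\-degree (as $\n_E(\n(f))=\n(\n_E(f))$), the field $\n$ is tangent to the zero section and $\n_M=\n|_M$; by the proposition on homogeneous submanifolds, $\n_M$ is a weight vector field on $M$. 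Choosing $\n_M$\-homogeneous coordinates $(x^a)$ on $M$ (Proposition \ref{Sh}) and completing them to vector\-/bundle coordinates $(x^a,y^i)$ (so $\n_E=\sum_i y^i\pa_{y^i}$), linearity forces
$$\n=\sum_a u_a\, x^a\pa_{x^a}+\sum_{i,j}A^i_j(x)\, y^j\pa_{y^i},\qquad \n_M=\sum_a u_a\,x^a\pa_{x^a}.$$
The $x^a$ are already bi\-/homogeneous, of bi\-/degree $(u_a,0)$, so the whole problem reduces to replacing $(y^i)$ by fibre coordinates $\tilde y^i=\sum_j B^i_j(x)\,y^j$ with $B(x)\in\GL$ such that $\n(\tilde y^i)=v_i\tilde y^i$ for constants $v_i$, i.e.\ to solving $\n_M(B)=DB-BA$ with $D=\mathrm{diag}(v_i)$; near points off the zero section one must also match this with the straightening $\n_E=t\pa_t$ of Corollary \ref{cor}.

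\textbf{The crux, and the obstacle.} The heart of the matter is a normal\-/form statement for the fibre\-/linear part: the extra hypothesis that $\n$ is \emph{itself} a weight vector field on $E$ (hence admits homogeneous coordinates near points both on and off the zero section) should force $A$, after an allowed change of local frame, to be constant and diagonal, so that the equation above is solved by a constant $B$. The mechanism is already visible for line bundles over $\R$ with $\n_M=0$, where $\n=A(x)\,y\,\pa_y$: such a field admits homogeneous coordinates near a point of the zero section only if $A$ is locally constant, because a would\-/be homogeneous fibre coordinate $\tilde y$ satisfies $A(x)\,y\,\pa_y\tilde y=v\tilde y$, whose smooth solutions are monomials in $y$ with $x$\-/independent exponent only when $v/A(x)$ is constant. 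I would first settle this rank\-/one case carefully, then the case $A(m)$ semisimple by a constant diagonalization combined with the same rigidity fibrewise, and finally the general case by induction on the generalized eigenspaces of $A(m)$, propagating the normal form off the zero section by using that $\n$ commutes with $t\pa_t$. An alternative route is to imitate the double vector bundle argument of \cite{Grabowski:2009}, treating $(\n,\n_E)$ as a ``graded\-/bundle $\times$ vector\-/bundle'' pair. The main obstacle, and the reason the statement is only conjectured, is precisely this rigidity step: for an arbitrary commuting pair of weight vector fields simultaneous diagonalizability can fail, so the linear (Euler) nature of $\n_E$ must be used in an essential way; if the rigidity argument breaks down, a counterexample would first appear in exactly these low\-/dimensional line\-/bundle computations, which should therefore be checked before attempting the general proof.
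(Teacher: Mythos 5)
Your proposal does not (and does not claim to) close the argument, and neither does the paper: the statement is posed there explicitly as Conjecture \ref{conjecture}, introduced by ``It is not clear yet whether these two concepts coincide'' and followed by ``This is an open question,'' so there is no completed proof in the paper to compare against. (A draft proof does survive in the source after the end of the document; it settles the ``only if'' direction, handles the case where one of the two commuting fields is non-vanishing at the base point by transporting homogeneous coordinates along the flow of the non-vanishing field, and then, in the doubly-vanishing case, writes $\n=\sum_{\za,\zb}a^\za_\zb(x)y^\zb\pa_{y^\za}+X(z)$ in affine bundle coordinates and breaks off. That is essentially the point your reduction also reaches.)

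What you get right: the easy implication; the observation that $[\n,\n_E]=0$ makes $\n$ linear, hence projectable onto a weight vector field $\n_M$ on the base and tangent to the zero section; and the reduction to solving $\n_M(B)=DB-BA$ for an invertible matrix-valued $B(x)$ with $D$ constant diagonal. The genuine gap is the step you yourself flag: showing that the hypothesis that $\n$ is a weight vector field on all of $E$ forces the fibre-linear part $A(x)$ into constant diagonal form after an admissible change of frame. Your rank-one computation (a homogeneous fibre coordinate $\tilde y=c(x)\,y+O(y^2)$ of weight $v$ forces $A(x)\equiv v$ by comparing linear parts in $y$) is correct, but the general case must contend with the fact that the homogeneous coordinates supplied by the hypothesis need not be fibre-linear, with Jordan blocks and eigenvalue crossings of $A(x)$, and with resonances between the base weights $u_a$ and differences of eigenvalues of $A$ when solving $\n_M(B)=DB-BA$; none of this is carried out. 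It is precisely this normalization that keeps the statement at the level of a conjecture, so your write-up should be read as a correct and honest reduction paralleling the authors' own unfinished attempt, not as a proof.
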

\no For any homogeneity vector superbundle $(E,\n,\n')$, where $\n'$ is the Euler vector field of a vector bundle  structure $\zt:E\to M$ on $E$, the weight vector field $\n$ is linear on $E$, so projectable and tangent to $M$, thus turning $M$ into a homogeneity supermanifold.
\begin{example}
A weight vector field $\n$ on a supermanifold $M$, written locally as
$$\nabla=\sum_aw_ax^a\pa_{x^a}\,,$$
determines a homogeneity structure on $\sT M$, given by the tangent lift
$$\dt\nabla=\sum_aw_a\left(x^a\pa_{x^a}+\dot x^a\pa_{\dot x^a}\right).$$
It turns $\sT M$ into a homogeneity vector superbundle. Bi-homogeneous coordinates are $x^a$ (with weight $(w_a,0)$) and $\dot x^a$ (with weight $(w_a,1$). Of course, the second coefficient is the weight with respect to the Euler vector field $\n_{\sT M}$.

Similarly, the cotangent bundle $\sT^*M$ is canonically a homogeneity vector superbundle with respect to the cotangent lift $\dts\n$ and the Euler vector field $\n_{\sT^* M}$.
\end{example}
\begin{example}\label{hvsT}
Suppose $(E,\n,\n')$ is a homogeneity vector superbundle $\zt:E\to M$ modeled on $\R^{k|l}(\zm)$, the linear homogeneity supermanifold of type $\zm$,  for some multi-index
$$\zm=(w_1,\dots,w_k,v_1,\dots,v_l)\in\R^{k}\ti\R^l\,,$$
i.e., we have local trivializations of $\zt$ of the form
$$\zt^{-1}(U)=U\ti\R^{k|l}(\zm).$$
If the transition maps are linear, i.e., respect linearity of functions, and respect the linear homogeneity structure on $\R^{k|l}(\zm)$ (cf. (\ref{zl})), represented by the weight vector field
$$\nabla_\zm=\sum_{a=1}^kw_a\cdot y^a\,\pa_{y^a}+\sum_{j=1}^lv_j\cdot\zh^j\,\pa_{\zh^j}\,,$$
on $\R^{k|l}$ with the standard coordinates $(y^a,\zh^i)$ (understood as a vector field on the product manifold $U\ti\R^{k|l}(\zm)$), then we get a homogeneity vector superbundle called a \emph{homogeneity vector superbundle of type $\zm$}. In this picture, homogeneous coordinates of degree $w$ in $M$ are bi-homogeneous of bi-degree $(w,0)$, even fiber coordinates $y^a$ are bi-homogeneous of bi-degree $(w_a,1)$, and odd fiber coordinates $\zh^i$ are bi-homogeneous of bi-degree $(v_i,1)$.
\end{example}
\no Note that being of type $\zm$ is a rather strong assumption for a homogeneity vector superbundle. For instance,
the tangent bundle in Example \ref{hvsT} is generally not of this type. This is because there is nothing like a vertical part of $\dt\n$. The local vertical part of $\dt\n$ depends on local homogeneous coordinates on $M$ and changes under transition maps. If $\n$ is not vanishing at a certain $x_0\in|M|$, then we can find local coordinates around $x_0$ of almost arbitrary weights, so the weights of the local vertical part can be completely different.

On the other hand, this type of graded manifold is very common, and almost standard in the case of $\Z$-graded manifolds, where one usually assumes that $M$ is purely even and $w_i$ and $v_j$ are integers. For instance, this is the case of graded manifolds of Mehta \cite{Mehta:2006}, and Voronov \cite{Voronov:2002}.

\mn An useful observation is that on homogeneity superbundles with a typical fiber $F=\R^{k|l}(\zm)$ and the trivial homogeneity structure on the base, we have a well-defined action $h:\R_+\ti E\to E$ of the multiplicative group $\R_+$ of positive reals. This action in homogeneous local coordinates takes the form
\be\label{st}h_t(x^\za,y^a,\zh^j)=(x^\za,t^{w_a}\cdot y^a,t^{v_j}\cdot\zh^j)\,,\ t>0\,.\ee
Note that $h_t\circ h_s=h_{ts}$, so that $(h_t)_{t>0}$ is a one-parameter group of (global) diffeomorphisms of $E$.

Moreover, it is clear that each $h_t$ preserves the weights of homogeneous functions and preserves the form of allowed charts, so that it is an automorphism of the homogeneity superbundle $E$. The $\n_\zm$-weight of a homogeneous function $f$ can be recognized as $w\in\R$ such that $f\circ h_t=t^w\cdot f$ for $t>0$. Similarly, for a $w$-homogeneous vector field $X$, we have $(h_t)_*(X)=t^w\cdot X$, and for a $w$-homogeneous differential form $\zw$ we have $(h_t)^*(\zw)=t^w\cdot\zw$. Diffeomorphisms $h_t$ of this action, $t\ne 0$, are called in \cite{Voronov:2002} \emph{scaling transformations}. For $\N$-graded superbundles, these scaling transformations are actually defined for all $t\in\R$ and the bundle projection $\zt$ is $h_0$.

A useful comparison of various approaches to vector bundles on graded manifolds (sheaves of graded modules, projective graded modules, fiber bundles with fibers having a suitable linear structure) can be found in \cite{Smolka:2025}.

\begin{remark} We can weaken our assumptions for a homogeneity vector superbundle of type $\zm$, and start not with a vector superbundle of rank $(k|l)$ but with an arbitrary fiber bundle $\zt:E\to M$, with a typical fiber $F$ carrying a homogeneity structure $\n_F$, and assume that the transition maps for local trivializations respect the product homogeneity structure, i.e.,
$$t_{ij}:U_{ij}\ti F\to U_{ij}\ti F$$
are morphisms of the corresponding homogeneity manifolds respecting $\n_F$. Such structures we will call \emph{homogeneity superbundles}.
\end{remark}
\section{Cartan calculus}
\mn Let $M$ be a $\zG$-homogeneity supermanifold with local homogeneous coordinates $(x^a)$, $\deg(x^a)=(\zs_a,w_a)$, and let $m\in |M|$. If $X$ is a vector field on $M$, $X=\sum_af_a(x)\pa_{x^a}$, then we denote with $X(m)\in\sT_mM$ the vector $X(m)=\sum_a\wt{f_a}(m)\pa_{x^a}$, where $\wt{f}$ is the restriction of $f$ to the submanifold $|M|$; we just kill all odd coordinates. Similarly, we define $\zw(m)$ for a (local) differential 1-form $\zw$ as an element of $\sT^*_mM$.
We say that a vector field $X$ (a differential form $\zw$) on $M$ is \emph{non-zero} at $m\in |M|$ if $X(m)\ne 0$ (resp., $\zw(m)\ne 0$), and that $X$ ($\zw$) is \emph{nowhere-vanishing} if $X(m)\ne 0$ ($\zw(m)\ne 0$) for all $m\in|M|$. It is easy to see that a one-form is nowhere-vanishing if and only if there is a vector field $X$ such that $i_X\za=1$. The vector field $X$ is then also nowhere-vanishing.

For $\zG\subset\R$, a \emph{$\zG$-graded vector superspace} is a graded vector space $V=\op_{\zg\in\Z_2\ti\zG}V^\zg$. The tangent space $\sT_mM$ is a $(-\zG)$-graded vector superspace consisting of all $X(m)$, i.e., it is spanned by vectors $\pa_{x^a}(m)$. This means that If $x^a$ is of degree $(\zs_a,w_a)$, then $\pa_{x^a}(m)$ is of degree $(\zs_a,-w_a)$.
The 1-form $\xd x^a$ a linear function on $\sT_mM$ of degree $(\zs_a,w_a)$.

\mn All this implies that with every system of homogeneous local coordinates $(x^a)$ on a homogeneity supermanifold $M$ we can associate the adapted coordinate systems in the tangent and cotangent bundles: $(x^a,\dot x^b)$ on $\sT M$ and $(x^a,p_b)$ on $\sT^*M$. The linear functions $\dot x^b$ correspond to differential forms $\xd x^b$ and the linear functions $p_b$ correspond to vector fields $\pa_{x^b}$. In other words, sections of $\sT M$ are linear functions on $\sT^*M$ and \emph{vice versa}. The degree of $p_a$ is $-\deg(x^a)$, and the degree of $\dot x^a$ is $\deg(x^a)$.
\begin{remark}
Note, however, that, because vector fields and one-forms have two natural $\cA(M)$-module structures, the left and the right, there are \emph{a priori} two possibilities of representing vector fields and one-forms as `submanifolds' in $\sT M$ and $\sT^*M$, respectively. The first is that $\dot x^a$ represent the left coefficients of vector fields and $p_a$ represent the right coefficients, so that $\big(x^a,f^b(x)\big)$ in $\sT M$ represents the vector field $X=\sum_af^a(x)\cdot\pa_{x^a}$, and $\big(x^a,g_a(x)\big)$ in $\sT^*M$ represents the one-form $\za=\sum_a\xd x^a\cdot g_a(x)$. Hence, for the pairing
$$\la\cdot,\cdot\ran_*:\sT M\ti\sT^*M\to\R\,,$$
we have
$$\la X,\za\ran_*=\sum_af^a(x)\,g_a(x).$$
The other possibility is that $\dot x^a$ represent the right coefficients of vector fields and $p_a$ represent the left coefficients of one-forms, so that $\big(x^a,f^b(x)\big)$ in $\sT M$ represents the vector field $X=\sum_a\pa_{x^a}\cdot f^a(x)$, and $\big(x^a,g_a(x)\big)$ in $\sT^*M$ represents the one-form $\za=\sum_a g_a(x)\cdot\xd x^a$. Hence, for the pairing $$\la\cdot,\cdot\ran^*:\sT^* M\ti\sT M\to\R\,,$$
we have
$$\la X,\za\ran^*=\sum_ag_a(x)\,f^a(x).$$
We will use the first convention.
\end{remark}
\no An attempt to define the wedge products in the graded superalgebras
$$\zW^\bullet(M)=\bigoplus_{k=0}^\infty\zW^k(M)\quad\text{and}\quad \fX^\bullet(M)=\bigoplus_{k=0}^\infty\fX^k(M)$$
could be, like in the purely even case, by viewing  differential forms and multivector fields on $M$ as superfunctions on $\zP\sT M$ and $\zP\sT^*M$, respectively.
The explicit sign rules would be
\[
x^a\cdot\pa_{x^b}=(-1)^{\zs_a\cdot(\zs_b+1)}\,\pa_{x^b}\cdot x^a\,,\quad \pa_{x^a}\we\pa_{x^b}=(-1)^{(\zs_a+1)(\zs_b+1)}\,\pa_{x^b}\we\pa_{x^a}
\]
and
\[x^a\cdot\xd {x^b}=(-1)^{\zs_a\cdot(\zs_b+1)}\,\xd {x^b}\cdot x^a\,,\quad \xd x^a\we\xd x^b=(-1)^{(\zs_a+1)(\zs_b+1)}\,\xd x^b\we\xd x^a\,.
\]
The above sign rules are called the \emph{Bernstein's sign convention}.

\mn In this paper, we will use the \emph{Deligne's sign convention}:
\be\label{sc1}
x^a\cdot\pa_{x^b}=(-1)^{\zs_a\cdot\zs_b}\,\pa_{x^b}\cdot x^a\,,\quad \pa_{x^a}\we\pa_{x^b}=-(-1)^{\zs_a\cdot\zs_b}\,\pa_{x^b}\we\pa_{x^a}
\ee
and
\be\label{sc2} x^a\cdot\xd {x^b}=(-1)^{\zs_a\cdot\zs_b}\,\xd {x^b}\cdot x^a\,,\quad \xd x^a\we\xd x^b=-(-1)^{\zs_a\cdot\zs_b}\,\xd x^b\we\xd x^a\,,
\ee
which, in our opinion, has many advantages over Bernstein's one (see, e.g., \cite[Appendix to \S 1]{Deligne:1999}). We will often skip the symbol "$\we$" and write simply $\xd x^a\,\xd x^b$.

\begin{remark} Note that the graded superalgebras $\zW^\bullet(M)$ (as well as $\fX^\bullet(M)$) have an additional $\N$-gradation, so the degrees of homogeneous elements have three components: $(\zs,w,r)\in\Z_2\ti\R\ti\N$, where $r\in\N$ is the \emph{rank} of the differential form, $r$-forms have rank $r$, $\zs$ is its parity, and $w$ is its weight. Deligne's convention comes from the $\Z_2^2$-gradation induced from the $\Z_2\ti\N$ gradation, in which $x^a$ has bi-degree $(\zs_a,0)$, $\xd x^a$ (thus $\dot x^a$) has bi-degree $(\zs_a,1)$, and the sign rule comes from the `scalar product' of the bi-degrees. Consequently, the product of homogeneous forms $\za$ and $\zb$ of degrees $(\zs_a,w_a,r_a)$ and $(\zs_b,w_b,r_b)$, respectively, has the property:
\be\label{SR}
\za\we\zb=(-1)^{\zs_a\zs_b+r_ar_b}\zb\we\za.
\ee
We can view this product as a product of superfunctions not on a supermanifold but rather on a $\Z_2^2$-manifold in the sense of \cite{Covolo:2016a}. The main difference with standard supermanifolds is that the parity does not determine the sign rules, so we must work with formal even variables that are not nilpotent, thus with formal power series. On the other hand, many important properties remain the same. For instance, an analog of the Bachelor-Gaw\c edzki theorem \cite{Gawedzki:1977} is still valid \cite{Covolo:2016b}. Note that the tangent and cotangent bundles of supermanifolds should be canonically considered rather as $\Z_2\ti\N$-graded, where the $\N$-gradation corresponds to the vector bundle structure.
\end{remark}

\mn The standard operations of the Cartan calculus on $\zW^\bullet(M)$ will be interpreted as the following homogeneous vector fields, viewed as graded derivations with respect to the $\Z_2\ti\R\ti\N$:-grading
\begin{enumerate}
\itemsep1em
\item The \emph{de Rham derivative}: $\rmd := \dot x^a \frac{\partial}{\partial x^a}$,  which has degree $(0,0,1)$;
\item The \emph{interior product}: If $X = X^a(x) \frac{\partial}{\partial x^a}$, then
$$i_X :=  X^a(x)\,{\partial_{\dot x^a}}\,,$$ which has degree $(\deg(X),-1)$,
\item The \emph{Lie derivative}:  If $X = X^a(x) \frac{\partial}{\partial x^a}$, then
\be\label{Ll}\Ll_X := [\rmd, i_X] = \xd\circ i_X+i_X\circ\xd= \dot x^b\, \frac{\partial X^a}{\partial x^b}(x)\, {\partial_{ \dot x^a}}+X^a(x) \pa_{x^a}\,,\ee
which has degree $(\deg(X),0)$.
\end{enumerate}
\no Note that these operations can be applied to all, not just homogeneous differential forms.
In particular, for a 1-form $\za=\sum_b\xd x^b\cdot g_b(x)$ on $M$ and a  vector field $X=\sum_af^a(x)\cdot\pa_{x^a}$, the insertion operator (the contraction) reads
$$i_X\za=\sum_af^a(x)g_a(x)\,.$$
If $\za$ and $X$ are homogeneous, then $i_X\za$ is a homogeneous function on $M$ of degree $\deg(X)+\deg(\za)$.
Contractions gives rise to linear functions on $\sT^*M$: identifying $\pa_{x^a}$ with the fiber coordinate $p_a$, we have
$$\zi_X(x^a,p_b)=\sum_af^a(x)\,p_a\,.$$
Similarly, for $\zb=\sum_b g_b(x)\cdot\xd x^b$ and $Y=\sum_a\pa_{x^a}\cdot f^a(x)$, we have
$$i_\zb Y=\sum_ag_a(x)f^a(x)\quad\text{and}\quad \zi_\zb(x^a,\dot x^b)=\sum_ag_a(x)\dot x^a\,.$$
For $i_X$ we will use the convention of the first type, while for $i_\za$, the convention of the second type.
\begin{proposition}
If $X,Y$ are homogeneous vector fields on a supermanifold $M$, then the graded commutators of the vector fields on $\sT M$ of the type in question are the following:
\beas
&2\,\rmd^2=[\rmd,\rmd] =0\,, \quad [i_X,i_Y] =0\,,\\
&[\xd,\Ll_X]=0\,, \quad [\Ll_X,i_Y]=i_{[X,Y]}\,,\quad [\Ll_X,\Ll_Y]=\Ll_{[X,Y]}\,.
\eeas
Moreover, the de Rham derivative satisfies
$$\xd(\za\we\zb)=\xd\,\za\we\zb+(-1)^k\za\we\xd\,\zb$$
for any $\zb$ and any $k$-form $\za$.
\end{proposition}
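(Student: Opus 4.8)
The plan is to verify all the stated identities by working entirely on $\sT M$, where $\rmd$, $i_X$, and $\Ll_X$ have been realized as honest vector fields in the adapted coordinates $(x^a,\dot x^b)$; the graded commutators are then just the graded Lie brackets of vector fields on the supermanifold $\sT M$, for which all the usual rules of supergeometry apply (and which are legitimate by Remark \ref{rw}). I would treat the six bracket identities first, then the Leibniz rule for $\rmd$ separately.

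First I would record the coordinate expressions and their $\Z_2\ti\R\ti\N$-degrees: $\rmd=\dot x^a\,\pa_{x^a}$ has degree $(0,0,1)$; for $X=X^a(x)\pa_{x^a}$, $i_X=X^a(x)\,\pa_{\dot x^a}$ has degree $(\deg(X),-1)$ in the last two slots; and $\Ll_X=[\rmd,i_X]=\dot x^b\,\tfrac{\pa X^a}{\pa x^b}(x)\,\pa_{\dot x^a}+X^a(x)\,\pa_{x^a}$ has degree $(\deg(X),0)$. The nilpotency $2\rmd^2=[\rmd,\rmd]=0$ follows because $\rmd$ is odd with respect to the $\N$-grading and $[\dot x^a\pa_{x^a},\dot x^b\pa_{x^b}]$ vanishes by a direct computation (the two cross terms cancel by antisymmetry of $\dot x^a\dot x^b$ under the Deligne convention (\ref{sc2})). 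The identity $[i_X,i_Y]=0$ is immediate since $i_X$ and $i_Y$ only involve $\pa_{\dot x^a}$ with coefficients depending on $x$ alone, and these commute (up to the sign built into the graded bracket, which is exactly matched by $\dot x^a\dot x^b=-(-1)^{\zs_a\zs_b}\dot x^b\dot x^a$). The identity $[\rmd,\Ll_X]=0$ is the graded Jacobi identity applied to $\Ll_X=[\rmd,i_X]$ together with $[\rmd,\rmd]=0$: namely $[\rmd,[\rmd,i_X]]=\tfrac12[[\rmd,\rmd],i_X]=0$. Similarly $[\Ll_X,i_Y]=[[\rmd,i_X],i_Y]=[\rmd,[i_X,i_Y]]\pm[i_X,[\rmd,i_Y]]=\pm[i_X,\Ll_Y]$; expanding $[i_X,\Ll_Y]$ in coordinates one checks it equals $i_{[X,Y]}$, which pins down the sign and gives $[\Ll_X,i_Y]=i_{[X,Y]}$. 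Finally $[\Ll_X,\Ll_Y]=\Ll_{[X,Y]}$ follows by writing $\Ll_Y=[\rmd,i_Y]$, using the Jacobi identity, $[\rmd,\rmd]=0$, $[\Ll_X,\rmd]=0$, and the already-established $[\Ll_X,i_Y]=i_{[X,Y]}$, then invoking $\Ll_{[X,Y]}=[\rmd,i_{[X,Y]}]$.

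For the Leibniz rule $\xd(\za\we\zb)=\xd\za\we\zb+(-1)^k\za\we\xd\zb$ for a $k$-form $\za$, I would argue that $\rmd$, being the vector field $\dot x^a\pa_{x^a}$ on $\sT M$, is a graded derivation of the algebra of superfunctions on $\zP\sT M$ with respect to the full $\Z_2\ti\N$-grading; since $\za$ has total parity (in the $\Z_2^2$-sense of the preceding Remark) governed by $(\zs_\za,k)$ and $\rmd$ has bi-degree $(0,1)$, the sign picked up in moving $\rmd$ past $\za$ is precisely $(-1)^{0\cdot\zs_\za+1\cdot k}=(-1)^k$, which is the asserted sign. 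This reduces to checking the derivation property on the generators $x^a$ and $\xd x^a$, where it is the defining relation $\rmd(x^a)=\dot x^a$ and $\rmd(\dot x^a)=0$, extended multiplicatively.

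The main obstacle, such as it is, is bookkeeping: making sure every sign produced by the graded bracket of vector fields on $\sT M$ is consistent with the Deligne convention (\ref{sc1})--(\ref{sc2}) and with the parity assignments $\deg(\dot x^a)=(\zs_a,w_a,1)$. In particular the single nontrivial point is fixing the overall sign in $[\Ls_X,i_Y]=i_{[X,Y]}$ (as opposed to $-i_{[X,Y]}$ or $(-1)^{\text{something}}i_{[X,Y]}$); I would do this by an explicit two-line coordinate computation of $[i_X,\Ll_Y]$ once, since the abstract Jacobi-identity argument only determines the bracket up to that sign. Everything else is a mechanical consequence of the graded Jacobi identity for vector fields on the supermanifold $\sT M$ and the relations $[\rmd,\rmd]=0$, $[i_X,i_Y]=0$.
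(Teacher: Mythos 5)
Your proposal is correct and follows exactly the framework the paper sets up in the Cartan calculus section (the paper itself states this proposition without proof, treating it as a standard computation): you realize $\rmd$, $i_X$, $\Ll_X$ as homogeneous vector fields on $\sT M$, derive everything from $[\rmd,\rmd]=0$, $[i_X,i_Y]=0$ and the graded Jacobi identity, and fix the one nontrivial sign in $[\Ll_X,i_Y]=i_{[X,Y]}$ by a direct coordinate check under the Deligne convention. The reduction of the Leibniz rule to the generators $x^a$, $\xd x^a$ with the sign $(-1)^{0\cdot\zs_\za+1\cdot k}=(-1)^k$ coming from the bi-degree $(0,1)$ of $\rmd$ is likewise exactly the intended argument, so the proposal is complete.
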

\no For two-forms and the homogeneous vector field $X=\sum_if^i(x)\cdot\pa_{x^i}$, we have
\beas &i_{X}\left(\sum_{i,j}\xd x^i\we\xd x^j\cdot\zw_{ij}(x)\right)=\sum_{a,i,j}f^a(x)\pa_{\dot x^a}
\big(\dot x^i\,\dot x^j\cdot\zw_{ij}(x)\big)\\
&=\sum_{i,j}f^i(x)\,\dot x^j\cdot \zw_{ij}(x)=\sum_{i,j}f^i(x)\,\xd x^j\cdot \zw_{ij}(x)\,.
\eeas
Here, due to skew-symmetry,
$$\zw_{ij}=-(-1)^{\zs_i\cdot\zs_j}\zw_{ji}\,.$$
We should also stress that the de Rham derivative we use looks slightly different from the standard notation in the even differential geometry, i.e., $$\xd f(x)=\rmd x^i \frac{\partial f}{\partial x^i}(x)\,.$$
The order of factors is crucial for supermanifolds.

\mn Note that the identity $[\Ll_X,i_Y]=i_{[X,Y]}$ can be seen as a definition of the Lie bracket of homogeneous vector fields. If $\zs_X$ is the parity of $X$ and $\zs_Y$ is the parity of $Y$, then in local homogeneous coordinates $(x^i)$ we have
\be\label{bracket}
\Big[\sum_iX^i\pa_{x^i},\sum_jY^j\pa_{x_j}\Big]=\sum_j\Big(X^i\pa_{x^i}\big(Y^j\big)-
\big(-1\big)^{\zs_X\cdot\zs_Y}Y^i\pa_{x^i}\big(X^j\big)\Big)\pa_{x^j}.
\ee

\section{Homogeneity Lie supergroups}\label{hls}
One can consider homogeneity structures on supermanifolds with additional structures such that these structures are compatible with the homogeneity structure. What compatibility means should be decided in each particular case.
For the group structure, we define \emph{homogeneity Lie groups} as follows.
\begin{definition}
A \emph{homogeneity Lie supergroup} is a group object in the category of homogeneity supermanifolds $\catname{HSMan}$.
In other words, a homogeneity Lie supergroup is a homogeneity supermanifold $G$ endowed with
\begin{enumerate}
\item a morphism of homogeneity manifolds
\be\label{zm}\zm:G\ti G\to G\ee
(the \emph{group multiplication}) satisfying the associativity property,
$$\zm\circ\big(\pr_1\ti\zm\circ(\pr_2\ti\pr_3)\big)=\zm\circ\big(\zm\circ(\pr_1\ti\pr_2)\ti\pr_3\big),$$
where both sides are morphisms $G\ti G\ti G\to G$;
\item a distinguished element $e\in|G|$ (the \emph{unit}), thus a distinguished morphism $\pr_e:G\to G$, being the composition $G\to\{e\}\to G$, and satisfying
satisfying the identity property,
$$\zm\circ\big(\id_G\ti\pr_e\big)=\id_G=\zm\circ\big(\pr_e\ti\id_G\big);$$
\item
a homogeneity diffeomorphism
$$\on{inv}:G\to G$$
(the \emph{inverse map}), satisfying the inverse property
\be\label{inv}\zm\circ\big(\id_G\ti\inv\big)=\pr_e=\zm\circ\big(\inv\ti\id_G\big).\ee
\end{enumerate}
\end{definition}
\no It is easy to see that the body $|G|$ of a homogeneity Lie supergroup $G$ is a homogeneity Lie group.
\begin{example}
Let us consider the Lie supergroup $\GL(1,1)$. As a supermanifold, it is an open submanifold of $\R^{2|2}$. In canonical coordinates $(x^1,x^2,\zx^1,\zx^2)$, this Lie supergroup is defined by  $x^1\cdot x^2\ne 0$. The coordinates in $\GL(1,1)$ are usually written in the form of an $2\ti 2$-matrix,
\be\label{gl} \begin{bmatrix}
x^1 & \zx^1\\
\zx^2& x^2
\end{bmatrix}\,,
\ee
and the group operations are formally the same as for the group $\GL(2;\R)$, with $e$ being the identity matrix.

\mn Now, we can make $G=\GL(1,1)$ into a homogeneity supermanifold by choosing the weight vector field
$$\nabla_G=a\zx^1\pa_{\zx^1}-a\zx^2\pa_{\zx^2}\,.$$
It is sometimes convenient to indicate the degrees of homogeneous coordinates in $\Z_2\ti\Z$ also in the form of a matrix. In the above case of homogeneity group $\GL(1,1)$, its homogeneity structure reads
$$ \deg\begin{bmatrix}
x^1 & \zx^1\\
\zx^2& x^2
\end{bmatrix}
=
\begin{bmatrix}
(0,0) & (1,a)\\
(1,-a) & (0,0)
\end{bmatrix}\,.
$$
The group multiplication is the matrix multiplication,
$$\begin{bmatrix}
x^1 & \zx^1\\
\zx^2& x^2
\end{bmatrix}
\cdot\begin{bmatrix}
y^1 & {\zh^1}\\
{\zh^2}& {y^2}
\end{bmatrix}
=
\begin{bmatrix}
x^1\cdot{y^1}+\zx^1\cdot{\zh^2} & x^1\cdot{\zh^1}+\zx^1\cdot{y^2}\\
\zx^2\cdot{y^1}+x^2\cdot{\zh^2}& \zx^1\cdot{\zh^1}+x^2\cdot{y^2}
\end{bmatrix}\,.
$$
This is a morphism of homogeneity supermanifolds, since
$$x^1\cdot{y^1}+\zx^1\cdot{\zh^2}\quad\text{and}\quad \zx^1\cdot{\zh^1}+x^2\cdot{y^2}$$
are of degree $(0,0)$,
$$x^1\cdot{\zh^1}+\zx^1\cdot{y^1}$$
is of degree $(1,a)$, and
$$\zx^2\cdot{y^1}+x^2\cdot{\zh^2}$$
is of degree $(1,-a)$. The inverse matrix of (\ref{gl}) is
$$ \begin{bmatrix}
\Big(x^1-\zx^1\,(x^2)^{-1}\,\zx^2\Big)^{-1} & -(x^1)^{-1}\,\zx^1\Big(x^2-\zx^2\,(x^1)^{-1}\,\zx^1\Big)^{-1}\\
-(x^2)^{-1}\,\zx^2\Big(x^1-\zx^1\,(x^2)^{-1}\,\zx^2\Big)^{-1}& \Big(x^2-\zx^2\,(x^1)^{-1}\,\zx^1\Big)^{-1}
\end{bmatrix}\,.
$$
All inverses above make sense, as $x^1$ and $x^2$ are invertible. It is also clear that the elements on the diagonal are of degree $(0,0)$,
$$-(x^1)^{-1}\,\zx^1\Big(x^2-\zx^2\,(x^1)^{-1}\,\zx^1\Big)^{-1}$$
is of degree $(1,a)$, and
$$-(x^2)^{-1}\,\zx^2\Big(x^1-\zx^1\,(x^2)^{-1}\,\zx^2\Big)^{-1}$$
is of degree $(1,-a)$. Hence, the inverse map is an automorphism of $G$ in the category $\catname{HSMan}$.
And finally, for the identity map, we have
$$ \pr_e\left(\begin{bmatrix}
x^1 & \zx^1\\
\zx^2& x^2
\end{bmatrix}\right)=\begin{bmatrix}
1 & 0\\
0& 1
\end{bmatrix}\,,
$$
so that $\pr_e$ is also a morphism in the category $\catname{HSMan}$.
\end{example}
\begin{example} Let us consider the Lie supergroup $\SL(2,1)$, understood as a submanifold in
$\R^{5|4}$ with coordinates written in the matrix form,
$$ X=\begin{bmatrix}
x_{11} & x_{12} & \zx_{13}\\
x_{21} & x_{22} & \zx_{23}\\
\zx_{31} & \zx_{32} & x_{33}
\end{bmatrix}\,,
$$
where $x$ are even and $\zx$ are odd, and defined by the equation $\ber(X)=1$. Here `$\ber$' denotes the Berezinian. We put a homogeneity structure on $\SL(2,1)$ by
$$\deg(X)=\begin{bmatrix}
(0,0) & (0,a) & (1,a+b)\\
(0,-a) & (0,0) & (1,b)\\
(1,-a-b) & (1,-b) & (0,0)
\end{bmatrix}\,.
$$
We will also write the matrix $X$ in the block form
$$\begin{bmatrix}
A & B\\
C & D
\end{bmatrix}\,,
$$
where
$$A=\begin{bmatrix}
x_{11} & x_{12}\\
x_{21} & x_{22}
\end{bmatrix}
\in\GL(2,\R)\,,
$$
etc.
First of all, $\SL(2,1)$ is a homogeneous submanifold of the manifold $\GL(2,1)$ with this homogeneity structure.
In other words, the condition $\ber(X)=1$ is compatible with the homogeneity structure.
In our notation, the Berezinian reads
$$\ber(X)=\det(A-BD^{-1}C)\det(D)^{-1}=\det(A)\det(D-CA^{-1}B)^{-1}=1\,.$$
Hence,
$$A-BD^{-1}C=\begin{bmatrix}
x_{11} & x_{12}\\
x_{21} & x_{22}
\end{bmatrix}
-x_{33}^{-1}
\begin{bmatrix}
\zx_{13}\zx_{31} & \zx_{13}\zx_{32}\\
\zx_{23}\zx_{31} & \zx_{23}\zx_{32}
\end{bmatrix}\,
$$
is of degree
\be\label{deg}\deg(A-BD^{-1}C)=\begin{bmatrix}
(0,0) & (0,a)\\
(0,-a) & (0,0)
\end{bmatrix}\,,
\ee
and its determinant, thus also $\ber(X)$, is of degree $(0,0)$.

We will show that the Lie supergroup $G=\SL(2,1)$, with the indicated homogeneity structure, is a homogeneity Lie supergroup. Tedious but easy calculations show that the multiplication $\zm:G\ti G\to G$ in $G$, i.e.,
the matrix multiplication
$$\begin{bmatrix}
A & B\\
C & D
\end{bmatrix}
\cdot\begin{bmatrix}
A' & B'\\
C' & D'
\end{bmatrix}
=
\begin{bmatrix}
AA'+BC' & AB'+BD'\\
CA'+DC' & CB'+DD'
\end{bmatrix}\,,
$$
is a morphism in the category of homogeneity manifolds. For instance, the degree of the matrix
$$
AA'+BC'=
\begin{bmatrix}
x_{11}x'_{11}+x_{12}x'_{21}+\zx_{13}\zx'_{31} & x_{11}x'_{12}+x_{12}x'_{22}+\zx_{13}\zx'_{32}\\
x_{21}x'_{11}+x_{22}x'_{21}+\zx_{23}\zx'_{31} & x_{21}x'_{12}+x_{22}x'_{22}+\zx_{23}\zx'_{32}
\end{bmatrix}
$$
is
\be\label{degree}\begin{bmatrix}
(0,0) & (0,a) \\
(0,-a) & (0,0)
\end{bmatrix}\,,
\ee
and the degree of the matrix
$$AB'+BD'=\begin{bmatrix}
x_{11}\zx'_{13}+x_{12}\zx'_{23} +\zx_{13}x'_{33}\\
x_{21}\zx'_{13}+x_{22}\zx'_{23} +\zx_{23}x'_{33}
\end{bmatrix}\,,
$$
is
$$\begin{bmatrix}
(1,a+b)  \\
(1,b)
\end{bmatrix}\,,
$$
as it should be.

\mn For the inversion, we have
$$X^{-1}=\begin{bmatrix}
(A-BD^{-1}C)^{-1} & -A^{-1}B(D-CA^{-1}B)^{-1}\\
-D^{-1}C(A-BD^{-1}C)^{-1} & (D-CA^{-1}B)^{-1}
\end{bmatrix}\,.
$$
In view of (\ref{deg}), similarly to the previous example, we get that the degree of $(A-BD^{-1}C)^{-1}$ is (\ref{degree}).
As the degree of $D^{-1}C$ is $[(1,-a-b),(1,-b)]$, it is easy to see that the degree of $D^{-1}C(A-BD^{-1}C)^{-1}$ also $[(1,-a-b),(1,-b)]$. Similarly, we get the correct results for the second column.
\end{example}
\no The fact that the group multiplication (\ref{zm}) is a morphism of homogeneity manifolds means exactly that the weight vector field $\n$ is \emph{multiplicative}; in particular, $\n$ vanishes at $e$. Then, the inverse and the identity are automatically morphisms of homogeneity manifolds. For instance, the multiplicativity of $\n$ is equivalent to
\be\label{weigtcomm}\zm\circ\big(\Exp(s\n)\ti\Exp(s\n)\big)=\Exp(s\n)\circ\zm,\ee
and we get from (\ref{inv},
$$\Exp(s\n)\circ\inv=\inv\circ\Exp(s\n),$$
so  $\inv$ respects $\n$. The tangent lift $\dt\n$ is a linear vector field on $\sT G$, which is vertical at $e$, that defines a graded vector structure on the Lie superalgebra $\g=\sT_eG$. From (\ref{weigtcomm}) it is easy to derive the fact that this makes $\g$ into a graded Lie superalgebra.
We can sum up these observations as follows.
\begin{proposition}
A homogeneity Lie supergroup is a Lie supergroup $G$ equipped with a multiplicative weight vector field. It induces a graded Lie superalgebra structure on the Lie algebra $\g$ of $G$.
\end{proposition}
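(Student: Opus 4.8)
The plan is to establish the two assertions separately, drawing on the observations already recorded before the statement. The characterization --- that a homogeneity Lie supergroup is exactly a Lie supergroup carrying a multiplicative weight vector field --- has one trivial direction: if $G$ is a group object in $\catname{HSMan}$, then forgetting the weight vector fields makes $(G,\zm,e,\inv)$ an ordinary Lie supergroup, and requiring $\zm$ to be a morphism of homogeneity supermanifolds is, in view of the Cartesian-product rule $\nabla_{G\ti G}=\nabla_G+\nabla_G$, precisely the requirement that $\nabla_G$ be multiplicative.

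For the converse I would start from a Lie supergroup $G$ with a multiplicative weight vector field $\n$. First, $\n(e)=0$: since $\n+\n$ is $\zm$-related to $\n$ and $\zm(e,e)=e$, evaluating differentials at $(e,e)$ gives $\sT_{(e,e)}\zm(\n(e),\n(e))=\n(e)$, while $\sT_{(e,e)}\zm(v,w)=v+w$ on $\g=\sT_eG$; hence $\n(e)=0$. Consequently the inclusion of the one-point supermanifold $\{e\}$ (with trivial homogeneity structure) into $G$ is a morphism, and therefore so is $\pr_e$. Applying the flow $\Exp(s\n)$ to the inverse axiom (\ref{inv}) and using (\ref{weigtcomm}) yields $\Exp(s\n)\circ\inv=\inv\circ\Exp(s\n)$, so $\inv$ respects $\n$, and $\zm$ is a morphism by multiplicativity. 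Since the associativity, unit and inverse axioms are equalities of morphisms of supermanifolds already valid in $G$, and since projections from Cartesian products are morphisms, these axioms persist in $\catname{HSMan}$, so $(G,\n)$ is a homogeneity Lie supergroup.

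For the induced grading, note that $\n(e)=0$ makes the differential $D_e\n\in\End(\g)$ well defined --- equivalently, the tangent lift $\dt\n$ is vertical at $0_e$ and its vertical part, transported to $\sT_eG$, is $D_e\n$ --- and by Proposition~\ref{Sh} this operator is diagonalizable with real eigenvalues; together with the intrinsic parity of $\g$ it produces a finite $\Z_2\ti\R$-decomposition $\g=\bigoplus_\zl\g^\zl$, the tangent-space grading discussed in the Cartan-calculus section. The key point is that each $g_s=\Exp(s\n)$ is a locally defined automorphism of $G$ as a group object: multiplicativity of $\n$ is equivalent to $\zm\circ(g_s\ti g_s)=g_s\circ\zm$ by (\ref{weigtcomm}), and $g_s(e)=e$, so $g_s$ is a local homomorphism (it respects $\inv$ automatically). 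By the standard correspondence between local morphisms of Lie supergroups and morphisms of Lie superalgebras (see, e.g., \cite{Carmeli:2011}), $\sT_eg_s$ is a Lie superalgebra automorphism of $\g$, and by the chain rule at the fixed point $e$ the assignment $s\mapsto\sT_eg_s$ is a one-parameter subgroup of $\Aut(\g)$ with generator $D_e\n$. Differentiating $\sT_eg_s([u,v])=[\sT_eg_s\,u,\sT_eg_s\,v]$ at $s=0$ shows that the even operator $D_e\n$ is a derivation of the Lie superbracket; evaluating on homogeneous $u\in\g^\zl$ and $v\in\g^\zn$ then gives $[\g^\zl,\g^\zn]\subseteq\g^{\zl+\zn}$. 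Together with the bracket of $G$, this $\Z_2\ti\R$-grading makes $\g$ a graded Lie superalgebra.

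The part I expect to require genuine care is the super Lie-theoretic step: verifying, in the functor-of-points formulation, that a locally defined, identity-fixing, multiplicative diffeomorphism of $G$ is a local Lie-supergroup homomorphism, that its differential at $e$ is a homomorphism of Lie superalgebras, and that this differential is precisely $\Exp(sD_e\n)$. One must also keep the sign convention linking $D_e\n$ to the weight grading on $\sT_eG$ consistent with the rest of the paper so that the degrees add as claimed. Everything else is bookkeeping already carried out in the discussion preceding the statement.
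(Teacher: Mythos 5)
Your proposal is correct and follows essentially the same route as the paper: the paper's argument is exactly the observation that multiplicativity of $\n$ is equivalent to $\zm\circ(\Exp(s\n)\ti\Exp(s\n))=\Exp(s\n)\circ\zm$, that this forces $\inv$ and $\pr_e$ to be morphisms, and that the vertical part of $\dt\n$ at $e$ grades $\g$, with the graded-Lie-superalgebra property left as ``easy to derive'' from the flow relation. You simply carry out that last step explicitly (flow automorphisms differentiate to make $D_e\n$ a derivation of the bracket, whence $[\g^\zl,\g^\zn]\subseteq\g^{\zl+\zn}$), which is a faithful filling-in of the paper's sketch rather than a different approach.
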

\begin{remark}
We will not discuss the homogeneity principal superbundles here, but let us make a remark. As local trivializations of the $\R^\ti$-principal bundle $\zt:\R^{n|m}(\zm)^\ti\to\mathbb{P}\R^{n|m}(\zm)$ (cf. Example \ref{hps}), we can take
$\psi_i:\zt^{-1}(U_i)\to U_i\ti\R(w_i)^\ti$, which in coordinates $(y^j,\zx^a)$ on $\R^{n|m}(\zm)^\ti$ and $(z^k,\zvy^a)$ on $U_i$ read
$$\psi_i(y^j,\zx^a)=\left(\frac{y^1}{y^i},\dots,\dots,\frac{y^{i-1}}{y^i},\frac{y^{i+1}}{y^i},
\dots,\frac{y^n}{y^i},\frac{\zx^1}{y^i},\dots,\frac{\zx^m}{y^i},y^i\right)\,.$$
The typical fiber is therefore the homogeneity supermanifold $\R[w_i]^\ti$, the even linear homogeneity supermanifold $\R[w_i]$ with the zero-section removed. The corresponding weight vector field  is $\n=w_is\pa_s$, where $s$ is the standard coordinate on $\R$ (thus $\R^\ti$). But $F=\R(w)^\ti$ is not a homogeneity Lie group for $w\ne 0$, since the group multiplication $m:F\ti F\to F$ is not a morphism of homogeneity manifolds, as the degree of $s_1s_2$ in $F\ti F$ is $2w$, not $w$.

This suggests that the definition of a homogeneity $G$-principal bundle $\zt:P\to M$ should take this into account and cannot be a carbon copy of the standard definition. Consequently, the local trivializations should not be of the form $U\ti G$, but rather of the form $U\ti F$, where $U\ti F$ is the Cartesian product of homogeneity supermanifolds, with a free and transitive homogeneity action $A:G\ti F\to F$. In our case $G=\R^\ti$ and $F=\R(w_i)^\ti$, with the obvious action $A(t)(s)=ts$. Note here another, completely different approach to graded Lie groups \cite{Benoit:2022}.
\end{remark}
\section{Homogeneous Poincar\'e Lemma}
We know from Lemma \ref{ul} that, in homogeneous coordinates $(x^i)$ with degrees $(\zl_i)$, the partial derivative $\pa_{x^k}(f)$ is homogeneous of degree $(\zl-\zl_k)$ if $f$ is homogeneous of degree $\zl$. A converse of the latter is the following.
\begin{theorem}\label{primitive} Let $\nabla_M$ be a weight vector field which, in local coordinates $(x^i)$ in a neighbourhood of $m\in|M|$, reads $\nabla_M=\sum_{i=1}^nw_i\cdot x^i\,\pa_{x^i}$, and let $g(x)$ be a homogeneous function of degree $\zl=(\zs,w)$ . Then, there is a function $f$ of degree $\zl+\zl_k$, $f(m)=0$, such that
    $$\frac{\pa f}{\pa x^k}(x)=g(x),$$
\begin{description}
\item{(a)} \ for $x^k$ odd, if and only if $\pa_{x^k}(g)=0$;
\item{(b)} \ for $x^k$ even, if $(w_k\cdot x^k)(m)=0$ (in particular, if $\nabla_M(m)=0$).
\end{description}
\end{theorem}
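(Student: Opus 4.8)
The plan is to treat the odd case (a) by a short algebraic argument and the even case (b) by constructing $f$ as an \emph{anti-derivative} of $g$ along the $x^k$-direction, with the homogeneity of $f$ verified by differentiating under the integral sign and one integration by parts. In both cases the degree of $f$ will come out as $\zl+\zl_k$ automatically, via Lemma~\ref{ul}.

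For case (a) I would use that, for $x^k$ odd, $\pa_{x^k}$ is an odd operator with $\pa_{x^k}^2=0$; concretely, every superfunction near $m$ is uniquely of the form $a+x^k b$ with $a,b$ not involving $x^k$, and $\pa_{x^k}(a+x^k b)=b$. The ``only if'' part is then immediate: if $\pa_{x^k}(f)=g$, then $\pa_{x^k}(g)=\pa_{x^k}^2(f)=0$. For the ``if'' part, $\pa_{x^k}(g)=0$ means $g$ does not involve $x^k$, so $f:=x^k\cdot g$ satisfies $\pa_{x^k}(f)=g$; it vanishes at $m$ because $x^k$ is odd, and by Lemma~\ref{ul}(2) it is homogeneous of degree $\zl_k+\zl=\zl+\zl_k$.

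For case (b), with $x^k$ even and $(w_k x^k)(m)=0$, I would first shrink the coordinate neighbourhood to a \emph{box} $V$ around $m$ so that for every $x\in V$ the segment obtained by varying the $k$-th coordinate between $c:=x^k(m)$ and $x^k$ stays inside $V$, and then set, for $x\in V$,
\[
f(x)\ :=\ \int_c^{x^k} g\big(x^1,\dots,x^{k-1},t,x^{k+1},\dots,x^n\big)\,\xd t .
\]
Then $f$ is a smooth superfunction of parity $\zs$, one has $\pa_{x^k}(f)=g$ by the fundamental theorem of calculus, and $f(m)=0$ because at $m$ the upper limit becomes $c$. To obtain homogeneity I would differentiate under the integral, use $\nabla_M(g)=w\,g$ to rewrite $\sum_{i\ne k}w_i x^i\pa_{x^i}g$ (with the $k$-th slot set to $t$) as $w\,g-w_k\,t\,\pa_t g$, and integrate the $t\,\pa_t g$ term by parts; collecting terms gives
\[
\nabla_M(f)\ =\ (w+w_k)\,f\ +\ w_k\,c\;g\big|_{x^k=c}.
\]
Since $w_k\,c=(w_k x^k)(m)=0$, the boundary term vanishes, so $f$ is homogeneous of weight $w+w_k$, hence of degree $\zl+\zl_k$.

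I expect the only delicate point to be the choice of lower limit in (b): it must be $x^k(m)$, not $0$, so that $f(m)=0$ without assuming $\nabla_M(m)=0$, and it is precisely the hypothesis $(w_k x^k)(m)=0$ that keeps this choice compatible with homogeneity (when $w_k\ne 0$ it forces $c=0$; when $w_k=0$ the lower limit is irrelevant for the weight). It is also worth noting the contrast with (a): for $x^k$ odd the condition $\pa_{x^k}(g)=0$ is a genuine integrability obstruction coming from $\pa_{x^k}^2=0$, with no analogue in the even case, which is why (b) is only stated as an ``if''.
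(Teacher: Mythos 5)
Your proof is correct and follows essentially the same route as the paper: in the odd case the candidate $f=x^k\cdot g$ with the obstruction $\pa_{x^k}(g)=0$, and in the even case the antiderivative $\int_{x^k(m)}^{x^k}g\,\xd t$ whose homogeneity is checked by differentiating under the integral and integrating by parts, with the boundary term killed by $(w_kx^k)(m)=0$. Your explicit remark about shrinking to a box neighbourhood and about why the lower limit must be $x^k(m)$ is a small but welcome point of care that the paper leaves implicit.
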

\begin{proof}
It is clear from Lemma \ref{ul} that if $f$ is homogeneous and $\pa_{x^k}(f)=g$, then $f$ is of weight $w+w_k$. Permuting coordinates, we can assume that $k=1$.

\mn If $x^1=\zx$ is odd, then, the partial derivative $\frac{\pa f}{\pa\zx}=g$ does not depend on $\zx$ any longer, so $\pa_{\zx}(g)=0$ is a necessary condition. But in this case, it is enough to put $f=\zx\cdot g$, which is, clearly, a homogeneous function of weight $(w+w_1)$ and $f(m)=0$.

\mn In the case of $x^1$ even, define $f$ by
$$f(x)=\int_{x^1(m)}^{x^1}g(s,x^2,\dots,x^n)\xd s\,.$$
It is obvious that $\frac{\pa f}{\pa x^1}(x)=g(x)$ and $f$ have the parity of $g$, so it remains to check the weight of $f$. For,
\beas\nabla_M(f)(x)&=&w_1\cdot x^1\,\frac{\pa f}{\pa x^1}(x)+\sum_{a=2}^nw_a\cdot x^a\,\frac{\pa f}{\pa x^a}(x)\\
&=&w_1\cdot x^1\,g(x)+\int_{x^1(m)}^{x^1}\left(\sum_{a=2}^n\left(w_a\cdot x^a\,\frac{\pa g}{\pa x^a}\right)(s,x^2,\dots,x^n)\right)\xd s\\
&=&w_1\cdot x^1\,g(x)+\int_{x^1(m)}^{x^1}\left(w\cdot g(s,x^2,\dots,x^n)-w_1\cdot s\cdot\frac{\pa g}{\pa x^1}(s,x^2\cdot,x^n)\right)\xd s\\
&=&w_1\cdot x^1\,g(x)+w\cdot f(x)-w_1\cdot\int_{x^1(m)}^{x^1} s\cdot\frac{\pa g}{\pa x^1}(s,x^2\dots,x^n)\,\xd s\,.
\eeas
Integrating now by parts, we get
\beas \nabla_M(f)(x)&=&w_1\cdot x^1\,g(x)+w\cdot f(x)\\
&&-w_1\cdot\left((s\cdot g(s,x^2,\dots,x^n))\,\big|^{x^1}_{x^1(m)}-\int_{x^1(m)}^{x^1}g(s,x^2\dots,x^n)\,\xd s\right)\\
&=&w_1\cdot x^1\,g(x)+w\cdot f(x)-w_1\cdot x^1\,g(x)\\
&&+w_1\cdot x^1(m)\,g(x^1(m),x^2,\dots,x^n)+w_1\cdot f(x)\\
&=&(w+w_1)\cdot f(x)+w_1\cdot x^1(m)\,g(x^1(m),x^2,\dots,x^n)=(w+w_1)\cdot f(x)\,.
\eeas
\end{proof}
\begin{remark}
There is no extension of the above theorem to the case $w_k\cdot x^k(m)\ne 0$. Consider $M=\R$ with the weight vector field $\nabla_M=x\,\pa_x$, so that $x$ is of weight $1$. In a neighbourhood of $m=1$, the function $g(x)=1/x$ is homogeneous of weight $-1$, but there is no smooth homogeneous function $f$ in a neighbourhood of $m=1$ such that $f'(x)=g(x)$. Indeed, any smooth function $f$ satisfying $f'(x)=g(x)$ in a neighbourhood of $1$ is of the form $f(x)=\ln(x)+c$, where $c$ is a constant. Neither of them is homogeneous.
\end{remark}
\no Later on, we will use a modified version of Moser's trick. To this end, we will need the following homogeneous Poincar\'e Lemma.
\begin{lemma}\label{Mt}\textbf{(Homogeneous Poincar\'e Lemma)}
Let $(M,\nabla_M)$ be a homogeneity supermanifold of total dimension $k$, let $m\in|M|$, and let
$\nabla_M=\sum_{i=1}^kw_i\cdot x^i\,\pa_{x^i}$ in a neighbourhood of $m$. In this neighbourhood, take a closed homogeneous $n$-form $\zw$, $n>0$, of degree $\zl=(\zs,w)$.
\begin{enumerate}
\item
If $\nabla_M(m)=0$, then we can find a homogeneous $(n-1)$-form $\za$ of degree $\zl$ such that $\zw=\xd\za$. Moreover, we can choose $\za$ such that $\za(m)=0$.
\item
If $\nabla_M(m)\ne 0$, then there is such a form $\za$ for $n>1$. For $n=1$, we can find a homogeneous function $f$ such that $\zw=\xd f$, except for the case $w=0$. However, the function $f$ does not vanish at $m$ (except $\zw=0$, of course).
\item If $\nabla_M(m)\ne 0$ and a homogeneous 1-form $\zw$ has weight 0, then in local coordinates $(x,y^i)$ such that $x(m)=1$ and $y^i(m)=0$ (see Corollary \ref{cor}),
$$\zw(x,y)=c\,\xd x/x+\xd g(y),$$
for a constant $c\in\R$, and $g=g(y)$ being a function depending on coordinates $(y^i)$ only. In other words,
$$\zw=\xd f,\quad f(x,y)=c\,\ln(x)+g(y)+c_1.$$
The function $f$ is not homogeneous if $c\ne 0$.
\end{enumerate}
\end{lemma}
\begin{proof}
\textbf{1.} Assume first that $\nabla_M(m)=0$. If $w\ne 0$, then we can simply put
$$\za=\frac{1}{w}\cdot i_{\nabla_M}\zw\,.$$
As $\nabla_M$ is of degree 0 and $\zw$ is of degree $\zl$, the $(n-1)$-form $\za$ is of degree $\zl$ and vanish at $m$. Moreover,
$$\xd \za=\frac{1}{w}\cdot\xd\, i_{\nabla_M}\zw=\frac{1}{w}\cdot\Ll_{\nabla_M}\zw=\zw\,.$$

\medskip\noindent
It remains to consider the case $w=0$, so let us look for a $(n-1)$-form $\za$ satisfying
\be\label{D}\deg(\za)=\zl\,,\ \xd\za=\zw\,,\ \za(m)=0\,.\ee
We will use the induction with respect to the total dimension $k$ of $M$.

\medskip\noindent
The first case we consider is $M$ purely odd.
If $k=1$, then there is a single odd coordinate $\zx$ around $m\in|M|=\{ m\}$. If $\zw$ is a closed $n$-form, then $\zw=(a\cdot\zx+b)(\xd\zx)^n$, $a,b\in\R$. But $\xd\zw=a\cdot(\xd\zx)^{n+1}=0$, so $a=0$ and $\zw=b\cdot(\xd\zx)^n$. Hence, $\zx$ is of weight $0$, and we can put $\za=b\cdot\zx\,(\xd\zx)^{n-1}$.

\medskip\noindent
Assuming that we have proved the Theorem for all purely odd homogeneity supermanifolds $M$ of dimension $k$, let us take a purely odd homogeneity supermanifold $M$ of dimension $(k+1)$ and a closed homogeneous $n$-form $\zw$ on $M$ of weight $0$. For $\zx=x^1$ we can write $\zw$ uniquely in the form
$$\zw=\sum_{j=0}^n(\xd\zx)^j\we\zw_j+\sum_{j=0}^n\zx\,(\xd\zx)^{j}\we\zb_j\,,$$
where the forms $\zw_j$ and $\zb_j$ depend only on coordinates $x^2,\dots,x^k$. Of course, $\zw_j$ is a homogeneous $(n-j)$-form of weight $-jw_1$. Since
\beas\xd\zw&=&\sum_{j=0}^n(-1)^j(\xd\zx)^j\we\xd\zw_j+\sum_{j=0}^n(\xd\zx)^{j+1}\we\zb_j+
\sum_{j=0}^n(-1)^{j}\zx\,(\xd\zx)^{j}\we\xd\zb_j\\
&=&\xd\zw_0+\sum_{j=1}^n(\xd\zx)^j\we\big(\zb_{j-1}+(-1)^j\xd\zw_j\big)+(\xd\zx)^{n+1}\we\zb_n+
\sum_{j=0}^n(-1)^{j}\zx\,(\xd\zx)^{j}\we\xd\zb_j=0\,,\eeas
we get $\xd\zw_0=0$ and $\zb_j=(-1)^{j}\,\xd\zw_{j+1}$ for $j=0,\dots,n-1$, and $\zb_n=0$. Since $\zw_0$ depends only on coordinates $x^i$ with $i>1$, we can view $\zw_0$ as an $n$-form on the homogeneity submanifold $x^1=0$ in $M$ with the weight vector field $\nabla_0=\sum_{i=2}^kw_i\cdot x^i\pa_{x^i}$ vanishing at $0$ (all coordinates are odd).

Now, applying the inductive assumption, we can find a homogeneous $(n-1)$-form $\za_0$ of weight $0$ and vanishing at $0$ such that $\xd\za_0=\zw_0$.
Viewing naturally $\za_0$ as an $(n-1)$-form on $M$, vanishing at $m$, we can take
$$\za=\za_0+\sum_{j=1}^n\zx\,(\xd\zx)^{j-1}\we\zw_{j}\,.$$
Clearly, $\za$ is homogeneous of weight $0$ and  vanishes at $m$. Moreover,
$$\xd\za=\xd\za_0+\sum_{j=1}^n(\xd\zx)^{j}\we\zw_j+\sum_{j=1}^n(-1)^{j-1}\zx\,(\xd\zx)^{j-1}\we\xd\zw_j
=\zw_0+\sum_{j=1}^n(\xd\zx)^{j}\we\zw_j+\sum_{j=0}^{n-1}\zx\,(\xd\zx)^{j}\we\zb_{j}=\zw\,.$$
This ends the inductive proof of the homogeneous Poincar\'e Lemma for purely odd supermanifolds.

\medskip\noindent
Now, we pass to the case when there exist even coordinates in a neighbourhood of $m$ and $\nabla_M(m)=0$.
Assume that $x=x^1$ is even, and denote the rest of the homogeneous coordinates around $m$ by $y^i$, $i=2,\dots,k$,
so that $\nabla_M=w_1\cdot x\,\pa_x+\nabla_1$,
$$\nabla_1=\sum_{i=2}^{k}w_i\cdot y^i\,\pa_{y^i}\,.$$
As $\nabla_M(m)=0$, we can take the coordinates $(x,y)$ vanishing at $0$.

For $k=1$ there are no coordinates $y^i$, and the $n$-form $\zw$ must be, actually, a one-form. We have $\zw(x)=g(x)\,\xd x$  for a homogeneous function $g(x)$ of weight $-w_1$. The one-form $\zw$ is automatically closed for any $g(x)$. If $w_1=0$, then we can put $\za=f(x)$ for any primitive function $f$ of $g$, $f'=g$. We can always choose $f(m)=0$. Thus we can reduce ourselves to the case $w_1\ne 0$, so that $w_1\cdot x\,\pa_x(g)=-w_1\cdot g$.
Solutions of the above equation on $(0,+\infty)$ or $(-\infty,0)$
have the form $g(x)=c\cdot x^{-1}$, where $c$ is a constant, which are not smooth at $0$; a contradiction, so there are no 1-forms $\zw$ satisfying the requirements. This finishes the proof for $k=1$, and we can pass to the inductive step.

\mn Let us consider $M$ of total dimension $(k+1)$ and a homogeneous $n$-form $\zw$ on $M$ of weight 0. We can write $\zw$ as $\zw=\xd x\we\zn_1+\zn_2$, where
$\zn_1$ and $\zn_2$ are homogeneous $(n-1)$ and $n$-forms of degrees $-w_1$ and $0$, respectively, which do not contain $\xd x$. Put
$$\zn_1=\sum_\zm g_\zm(x,y)\,(\xd y)^\zm\,,$$
where $\zm=(\zm_2,\dots,\zm_{k+1})\in\N^{k}$ is a multi-index such that $\sum_i\zm_i=n-1$ and
$$(\xd y)^\zm=(\xd y^2)^{\zm_2}\we\dots\we(\xd y^{k+1})^{\zm_{k+1}}\,.$$ Of course, $\zm_i\le 1$ if $y^i$ is even. The function $g_\zm(x,y)$ is homogeneous of weight
$-w_1-|\zm|$, where $|\zm|=\sum_{i=2}^{k+1}w_i\cdot \zm_i$.

According to Theorem \ref{primitive}, we can find homogeneous functions $f_\zm(x,y)$ of weight $-|\zm|$ such that $\pa_x(f_\zm)=g_\zm$. Moreover, $f_\zm(0,y)=0$. Put
$$\zb=\sum_\zm f_\zm(x,y)\,(\xd y)^\zm\,.$$
The $(n-1)$-form $\zb$ is homogeneous of weight 0, vanishes at $m$, and $\zw_1=\zw-\xd\zb$ is a homogeneous $n$-form of weight 0.
A simple observation is that $\zw_1$ does not contain $\xd x$. But $\xd\zw_1=0$, so that the coefficients of $\zw_1$ do not depend on $x$. Consequently, $\zw_1$ is a homogeneous $n$-form of weight 0 in coordinates $(y^i)$ with the weight vector field $\nabla_1$. As $\nabla_1(m)=0$, we can apply the inductive assumption and find a homogeneous $(n-1)$-form
$\za_1$ of weight 0, vanishing at $m$ and depending on $y^i$ only, such that $\zw_1=\xd\za_1$. Now, it is enough to put $\za=\za_1+\zb$.

\medskip\noindent
\textbf{2.} Now, we will consider the case $\nabla_M(m)\ne 0$.
As we know from Corollary \ref{cor}, we can choose homogeneous coordinates $(x,y^i)$ in a neighbourhood of $m$ such that $\nabla_M=x\,\pa_x$, $x(m)=1$, $y^i(m)=0$.

\mn Take a homogeneous closed $n$-form $\zw$ of weight $w$ on $M$, so that
$$\zw=\xd x\we\zn_1+\zn_2\,,$$
where $\zn_1$ and $\zn_2$ do not contain $\xd x$. Since $\zn_1$ is homogeneous of weight $(w-1)$ and $\zn_2$ is homogeneous of weight $w$, it is easy to see that $\zn_1=x^{w-1}\zw_1$ and  $\zn_2=x^w\,\zw_2$, where $\zw_1$ and $\zw_2$ are forms in the coordinates $(y^i)$ only, thus homogeneous of weight 0. We have then
$$\zw=x^{w-1}\,\xd x\we\zw_1+x^w\,\zw_2\,,$$
and
$$\xd\zw=\xd\left((x^{w-1}\,\xd x)\we\zw_1+x^w\,\zw_2\right)=-x^{w-1}\,\xd x\we\xd\zw_1+w\, x^{w-1}\xd x\we\zw_2+x^w\,\xd\zw_2=0\,.$$
Hence, $\xd\zw_2=0$ and $w\,\zw_2=\xd\zw_1$.
As $\zw_2$ is an $n$-form in coordinates $(y^i)$ of weight 0, we can apply our homogeneous Poincar\'e Lemma, proved already for the case $\nabla_M(m)=0$, to $\zw_2$ and write $\zw_2=\xd\zb$, where $\zb$ is a homogeneous $(n-1)$-form
in coordinates $(y^i)$, vanishing at $m$. Hence, $\xd\zw_1=w\,\xd\zb$, so $\zw_1=w\,\zb+\xd\zg$, for a $(n-2)$-form $\zg$ in coordinates $(y^i)$ with $\zg(m)=0$.

\mn If $w\ne 0$, we can write
\beas\zw&=&x^{w-1}\,\xd x\we(w\,\zb+\xd\zg)+x^w\,\xd\zb=\xd(x^w)\we\zb+x^w\,\xd\zb+\xd(x^w/w)\we\xd\zg\\
&=&\xd(x^w\,\zb)-\xd(\xd(x^w/w)\we\zg)=\xd\left(x^w\,\zb-\xd( x^w/w)\we\zg\right)\,,\eeas
so we can put
$$\za=x^w\,\zb-\xd( x^w/w)\we\zg\,.$$
The $(n-1)$-form $\za$ is homogeneous of weight $w$ and vanishes at $m$, since $\zb$ and $\zg$ do.

\mn In the case $w=0$, we have $\zw_2=0$, $\xd\zw_1=0$. If $n>1$, then applying part 1. of our homogeneous Poincar\'e Lemma, we can write $\zw_1=\xd\za_1$ for a homogeneous $(n-2)$-form in coordinates $(y^i)$, vanishing at $m$. As $\zw=x^{-1}\,\xd x\we\xd\za_1$, we can put
$$\za=-\xd x\we(\za_1/x)\,.$$ It is homogeneous of weight 0 and vanishes at $m$.

\mn On the other hand, if $n=1$, then
$\zw=(\xd x/x)\,f(y)+\zn(y)$, where $\zn$ is a 1-form and $f$ is a function, both depending only on variables $(y^i)$.
As $\xd\zw=(\xd x/x)\we\xd f(y)+\xd\zn(y)=0$, we get that $f=c$ is a constant and $\xd\zn=0$. Applying part 1 of our homogeneous Poincar\'e Lemma again, we can write $\zn=\xd g$ for a function $g=g(y)$. But all functions $\za$ such that $\xd\za=\zw=c\,\xd x/x+\xd g$ are of the form $c\,\ln(x)+g+c_1$, where $c_1$ is another constant. Such a function is homogeneous if and only if $c=0$, and we end up with $\zw$ that depends only on coordinates $(y^i)$ of weight 0; the case which was already solved. This proves the last part of the theorem.

\end{proof}
\section{Homogeneous symplectic forms}
Let us recall that a 2-form $\zw$ on a homogeneity supermanifold $(M,\n)$ is {homogeneous} of degree $\zl=(\zs,w)\in\Z_2\ti\R$ if $\zw$ has parity $\zs$ and $\Ll_\n\zw=w\cdot\zw$.
\begin{example}
If $M$ is a homogeneity supermanifold of total dimension $n$, then on $\sT^*[\zl]M$ there is a canonical symplectic form $\zw_M[\zl]$ of degree $\zl$. In the adapted coordinates $(x^a,p_b)$ on $\sT^*[\zl]M$, induced by homogeneous coordinates $(x^a)$ on $M$, we have $\deg(p_a)=\zl-\deg(x^a)$ and
$$\zw_M[\zl]=\sum_a\xd x^a\we \xd p_a\,.$$
In particular, the canonical symplectic form on $\zP\sT^*M=\sT^*[(1,0)]M$ is odd. Moreover, $\zw_M[\zl]=-\xd\zvy_M[\zl]$, where $$\zvy_M[\zl]=\sum_a\xd x^a\cdot p_a$$
is the canonical Liouville 1-form of degree $\zl$ on $\sT^*[\zl]M$.
\end{example}
\no Any two-form $\zw$ on a homogeneity supermanifold $M$ which is homogeneous of degree $\zl=(\zs,w)$ induces a morphism of homogeneous vector superbundles
$$\zw^\sharp:\sT M\to\sT^*[\zl]M$$
by contraction, $X\mapsto i_X\zw$. In local adapted coordinates, if $\zw=\xd x^a\we\xd x^b\cdot \zw_{ab}(x)$, then
\be\label{e4}\zw^\sharp(x^a,\dot x^b)=\left(x^a, \sum_c\dot x^c\zw_{cb}(x)\right)\,.\ee
Indeed, we have
$$\deg(\zw_{ab})=\zl-\deg(x^a)-\deg(x^b),$$
so
\beas &\deg\big(\dot x^a\zw_{ab}(x)\big)=\deg(\zw_{ab})+\deg(x^a)\\
&=\zl-\deg(x^a)-\deg(x^b)+\deg(x^a)=\zl-\deg(x^b),
\eeas
which agrees with $\deg(p_b)$ in $\sT^*[\zl]M$.
Note also that the form $\zw$, for any $m\in|M|$, induces an $\R$-linear map of degree $\zl$, $$\zw(m)^\sharp:\sT_mM\to\sT^*_mM,$$
between homogeneity super-vector spaces. If $\zp_M$ is the canonical surjection $\zp_M:\cA(M)\to C^\infty(|M|)$,
then $\zw(m)^\sharp$ is represented by the matrix $[\zw_{ab}(m)]\in\on{gl}(n;\R)$, where $\zw_{ab}(m)=\zp_M(\zw_{ab})(m)$.
\begin{definition}
We say that a 2-form $\zw$ on a supermanifold $M$ of total dimension $n$ has rank $r$, if $\zw^\sharp$ maps $\sT M$ onto a rank $r$ vector subbundle in $\sT^*M$.
A closed 2-form $\zw$ on $M$ we call \emph{symplectic} if $\zw^\sharp$ is an isomorphism of vector superbundles, i.e., $\zw^\sharp$ is of rank $n$.
\end{definition}
\no It is easy to see that $\zw^\sharp$ is of rank $r$ if and only if the $n\ti n$-matrix $[\zw_{ab}(x)]$ with coefficients in local smooth function on $M$ is of rank $r$. The following well-known lemma reduces the concept of this rank to the case of real matrices.
\begin{lemma}\label{l1}
Let $\cM_{n\ti n}(\cA)$ be the space of $n\ti n$-matrices with coefficients in the superalgebra $\cA(\R^{p|q})$ of functions (superfields) on $M=\R^{p|q}$.
Then, a matrix $X=[a^{i}_{j}]\in\cM_{r\ti r}(\cA)$ has rank $r$ if and only if the matrix $[(a^i_{j})(m)]$ is of rank $r$ for every $m\in\R^p$. In particular, $X$ is invertible in $\cM_{n\ti n}(\cA)$ if $[(a^i_{j})(m)]\in\GL(n;\R)$ for all $m\in\R^p$.
\end{lemma}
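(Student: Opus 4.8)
The plan is to reduce the statement to the classical fact about matrices of smooth functions by splitting $\cA(\R^{p|q})$ into its body and its nilpotent part. Recall that $\cA(\R^{p|q})\cong C^\infty(\R^p)\otimes\Lambda[\zx^1,\dots,\zx^q]$, that the canonical surjection $\zp_M:\cA(\R^{p|q})\to C^\infty(\R^p)$ killing the odd coordinates is a homomorphism of algebras, and that its kernel $\cJ$ --- the ideal generated by $\zx^1,\dots,\zx^q$ --- satisfies $\cJ^{q+1}=0$. Writing each entry as $a^i_j=b^i_j+\nu^i_j$ with $b^i_j=\zp_M(a^i_j)\in C^\infty(\R^p)$ and $\nu^i_j\in\cJ$, I would set $X=X_0+N$, where $X_0=[b^i_j]$ has entries in $C^\infty(\R^p)$ and $N=[\nu^i_j]$ has all entries in $\cJ$. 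Since an entry of $N^k$ is a sum of products of $k$ entries of $N$, it lies in $\cJ^k$; hence $N^{q+1}=0$, i.e.\ $N$ is a nilpotent matrix of order at most $q+1$, and this order is \emph{uniform}, not pointwise.

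For the ``if'' direction I would start from the hypothesis that $[(a^i_j)(m)]=[b^i_j(m)]$ is invertible for every $m\in\R^p$. Then $\det X_0$ is a nowhere-vanishing smooth function on $\R^p$, so $(\det X_0)^{-1}\in C^\infty(\R^p)$ and Cramer's rule (the adjugate formula) produces an inverse $X_0^{-1}$ with entries in $C^\infty(\R^p)\subset\cA(\R^{p|q})$. Now $X=X_0\,(I+X_0^{-1}N)$, and $X_0^{-1}N$ again has all entries in $\cJ$, hence is nilpotent of order $\le q+1$; therefore $I+X_0^{-1}N$ is invertible over $\cA$ with inverse the \emph{finite} Neumann sum $\sum_{k=0}^{q}(-X_0^{-1}N)^k$. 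Multiplying, $X^{-1}=\bigl(\sum_{k=0}^{q}(-X_0^{-1}N)^k\bigr)X_0^{-1}$ exists in $\cM_{r\ti r}(\cA)$, so $X$ has rank $r$; the same computation gives the ``in particular'' clause about $\GL(n;\R)$.

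For the ``only if'' direction I would take an inverse $Y$ of $X$ in $\cM_{r\ti r}(\cA)$ and apply $\zp_M$ entrywise; since $\zp_M$ is an algebra homomorphism, $\zp_M(X)\,\zp_M(Y)=I$, so $X_0=\zp_M(X)$ is invertible over $C^\infty(\R^p)$, and evaluating at a point shows $[(a^i_j)(m)]\in\GL(r;\R)$ for all $m$. To cover a general $n\ti n$ matrix of non-maximal rank $r$, I would apply the invertibility statement just proved to an $r\ti r$ submatrix whose body is invertible near a given point, using the lower semicontinuity of the rank of the real matrices $[(a^i_j)(m)]$ to extract such a minor.

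There is no serious obstacle here; the only points that need care are exactly the ones isolated above --- that the nilpotency order of $N$ (and of $X_0^{-1}N$) is bounded by $q+1$ \emph{uniformly}, which is what makes the Neumann series an honest finite sum inside $\cA$ rather than a formal series, and that $X_0^{-1}$ has smooth (not merely continuous) entries, which is guaranteed by $\det X_0$ being nowhere zero. Note also that parity plays no role: the determinant used is that of the body matrix $X_0$, whose entries are genuine commuting functions, so no sign subtleties of Berezinians intervene.
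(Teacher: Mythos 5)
The paper itself offers no argument for this lemma --- it is introduced with ``the following well-known lemma'' and used immediately in the corollary --- so there is no proof of record to compare yours against; your write-up supplies the missing standard argument, and it is correct. The decomposition $X=X_0+N$ with $X_0=\zp_M(X)$ and $N$ having entries in the ideal $\cJ=\ker\zp_M$, the observation that $\cJ^{q+1}=0$ forces $N^{q+1}=0$ \emph{uniformly}, the factorization $X=X_0(I+X_0^{-1}N)$ with the finite Neumann sum, and the converse via applying the algebra homomorphism $\zp_M$ entrywise to $XY=I$ are exactly the points that need to be made, and you make them; note also that the Neumann computation only uses associativity of matrix multiplication over $\cA$, so the possible oddness (hence anticommutativity) of entries of $N$ causes no trouble, as you correctly observe.

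One caution about your final remark on extending to an $n\ti n$ matrix of non-maximal rank $r$: the lemma as stated only concerns the square full-rank case, and the naive extension ``body has pointwise rank $r$ everywhere $\Rightarrow$ the matrix has rank $r$ over $\cA$'' is actually \emph{false} in general. For instance, over $\cA(\R^{0|1})=\Lambda[\zx]$ the matrix $\mathrm{diag}(1,\zx)$ has body of constant rank $1$, yet its image is not a free rank-$1$ direct summand of $\cA^2$. Extracting an invertible $r\ti r$ minor (which your argument does give you) shows that $r$ columns span a free rank-$r$ submodule, but it does not by itself show that the remaining columns lie in that span over $\cA$; that requires an extra hypothesis or argument. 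Since the statement you were asked to prove does not include this extension, this does not affect the correctness of your proof of the lemma, but the sentence as written overreaches.
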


\begin{corollary}
A closed $2$-form $\zw$ of degree $\zl=(\zs,w)$ on a $\zG$-homogeneity supermanifold $M$ has rank $r$ if and only if the maps $\zw(m)^\sharp:\sT_mM\to\sT^*[\zl]_mM$ are linear homogeneous maps of rank $r$ between graded vector spaces for all $m\in|M|$. Alternatively, the skew bilinear form $\zw(m):\sT_mM\ti\sT_mM\to\R$ is of rank $r$. Here, $\zw(m)\in\zL^2\,\sT_m^*M$ and
$$\zw(m)(\pa_{x^a},\pa_{x^b})=(i_{\pa_{x^b}}\circ i_{\pa_{x^a}})\zw(m)\,.$$
\end{corollary}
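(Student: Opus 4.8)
The plan is to reduce everything to Lemma \ref{l1}. Working in a local homogeneity chart with coordinates $(x^a)$ and writing $\zw=\xd x^a\we\xd x^b\cdot\zw_{ab}(x)$, I would first recall from the observation following Lemma \ref{l1} that $\zw^\sharp$ has rank $r$ precisely when the $n\ti n$ matrix $X=[\zw_{ab}(x)]$, whose entries lie in $\cA(M)$, has rank $r$; since rank is a local notion it suffices to treat $M=\R^{p|q}$. Lemma \ref{l1} then says that $X$ has rank $r$ over the superalgebra $\cA$ if and only if the real matrix $[\zw_{ab}(m)]$, where $\zw_{ab}(m)=\zp_M(\zw_{ab})(m)$, has rank $r$ for every $m$ in the body $\R^p$. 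By the paragraph preceding the Definition of rank, $[\zw_{ab}(m)]$ is exactly the matrix of the linear map $\zw(m)^\sharp\colon\sT_mM\to\sT^*_mM$ in the bases $\{\pa_{x^a}(m)\}$ of $\sT_mM$ and $\{\xd x^b(m)\}$ of $\sT^*_mM$; this already yields the equivalence ``$\zw$ has rank $r$'' $\iff$ ``$\zw(m)^\sharp$ has rank $r$ for all $m\in|M|$''.

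For the homogeneity, no new computation is needed: since $\zw$ has degree $\zl$, it was noted just after (\ref{e4}) that $\zw^\sharp$ is a morphism of homogeneous vector superbundles $\sT M\to\sT^*[\zl]M$, and in the paragraph before the Definition of rank that its fibre map $\zw(m)^\sharp$ is an $\R$-linear map of degree $\zl$. Restricting to the fibre over $m\in|M|$ thus gives the homogeneous $\R$-linear map $\zw(m)^\sharp\colon\sT_mM\to\sT^*[\zl]_mM$ of degree $\zl$ between the graded vector superspaces $\sT_mM$ and $\sT^*[\zl]_mM$, with the gradings as recalled in the Cartan calculus section ($\pa_{x^a}(m)$ has degree $-\deg(x^a)$, while $p_a$ has degree $\zl-\deg(x^a)$, so $[\zw_{ab}(m)]$ is indeed the matrix of a degree-$\zl$ map). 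This is the first assertion of the corollary.

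For the ``alternatively'' reformulation I would compute the matrix of the skew bilinear form $\zw(m)$ in the basis $\{\pa_{x^a}(m)\}$: iterating the contraction formula for two-forms recorded above, i.e. applying $i_{\pa_{x^b}}\circ i_{\pa_{x^a}}$ to $\xd x^c\we\xd x^d\cdot\zw_{cd}$, produces, up to a fixed nonzero universal scalar, the entry $\zw_{ab}(m)$; hence the matrix of $\zw(m)$ coincides with $[\zw_{ab}(m)]$ up to that scalar and so has the same rank $r$. Combining this with the first two paragraphs, $\zw$ has rank $r$ if and only if $\zw(m)\colon\sT_mM\ti\sT_mM\to\R$ has rank $r$ for every $m\in|M|$. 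There is no genuine obstacle here; the only point requiring care is the sign/scalar bookkeeping — the graded-skew relation $\zw_{ab}(m)=-(-1)^{\zs_a\zs_b}\zw_{ba}(m)$ and the wedge-normalisation constant — which, since odd coordinates are killed at body points and the scalar is invertible, never changes any rank, so the corollary follows at once from Lemma \ref{l1}.
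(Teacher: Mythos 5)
Your proposal is correct and takes essentially the same approach as the paper: the paper's entire proof of this corollary is the single line ``Direct application of Lemma \ref{l1}.'' Your write-up merely fills in the details that the paper leaves implicit (identifying the matrix of $\zw^\sharp$ with $[\zw_{ab}(x)]$, invoking Lemma \ref{l1} to pass to body points, and matching $[\zw_{ab}(m)]$ with both $\zw(m)^\sharp$ and the bilinear form $\zw(m)$), and each of these steps checks out against the preparatory remarks already made in the text.
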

\no The skew-symmetry of $\zw(m)$ means that
$$\zw(m)(e,e')=-(-1)^{(\zs+\zs_0)\cdot(\zs+ \zs_1)}\,\zw(m)(e',e)$$
for homogeneous vectors $e,e'$ with $\deg(e)=(\zs_0,w_0)$ and $\deg(e')=(\zs_1,w_1)$.
\begin{proof}
Direct application of Lemma \ref{l1}.
\end{proof}
\begin{theorem}\label{t2}
Let $V$ be a $(-\zG)$-graded vector superspace of total dimension $n$, and let $g:V\ti V\to\R$ be a bilinear skew-symmetric two-form of degree $\zl=(\zs,w)\in\Z_2\ti\R$ and rank $r$.
Then, there is a homogeneous basis $(q^1,\dots,q^a,p_1,\dots,p_a,y_1,\dots,y_b,z^1,\dots,z^c)$ of $V$ such that $2a+b=r$ and
\be
g=\sum_{i=1}^{a} (p_i)^*\we (q^i)^*+\,\sum_{j=0}^{b}\ze^j(y_j^*\we y_j^*)\,,
\ee
where $(p_i)^*,(q^i)^*,(y^j)^*,(z^k)^*$ is the dual basis in the $\zG$-graded dual vector superspace $V^*$.
In particular,
$$\ze^j=\pm 1\,,\quad\deg(p_i)=\zl-\deg(q^i)\,,\quad 2\deg(y^j)=(0,w)\,.$$
The vectors $y^j$ must be odd; thus, they can appear only if $g$ is even.
\end{theorem}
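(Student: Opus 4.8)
The plan is to prove the statement by induction on the total dimension $n=\dim V$, reproducing the classical normal‑form argument for skew bilinear forms while carrying the $\Z_2\ti(-\zG)$‑grading along. First I would record the two parity facts that drive everything. Since $g$ has parity $\zs$, for homogeneous $e,e'$ the value $g(e,e')$ has parity $\zs+|e|+|e'|$, so $g(e,e')\ne0$ being a real scalar forces $|e|+|e'|=\zs$; and applying the skew‑symmetry relation with $e'=e$ gives $g(e,e)=-(-1)^{\zs+|e|}g(e,e)$, so $g(e,e)=0$ unless $\zs+|e|=1$. Taking $e'=e$ in the first fact gives $\zs=0$, and then the second gives $|e|=1$: a homogeneous vector $y$ with $g(y,y)\ne0$ is necessarily odd and can occur only when $g$ is even, which is precisely the last assertion of the theorem. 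The degree identities $\deg(p_i)=\zl-\deg(q^i)$ and $2\deg(y^j)=(0,w)$ then drop out for free from homogeneity of $g$ together with $g(p_i,q^i),g(y_j,y_j)\in\R^\times$, which force the degrees of the relevant pairs of dual basis covectors to add up to $\zl$.

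For the induction: if $g=0$ then $r=0$ and any homogeneous basis of $V$ supplies the vectors $z^1,\dots,z^n$; this case also absorbs the radical in the general step. If $g\ne0$, then $g^\sharp\colon V\to V^*$ is a nonzero homogeneous map, so there is a homogeneous $u$ with $g(u,\cdot)\ne0$, hence a homogeneous basis vector $v$ with $g(u,v)\ne0$. \emph{Case 1 (a homogeneous $y$ with $g(y,y)\ne0$ exists).} Rescale $y$ by a real scalar so that $g(y,y)=\ze=\pm1$, and set $W=\{x\in V:g(x,y)=0\}$, a graded subspace of codimension one with $V=\R y\oplus W$; skew‑symmetry also gives $g(y,W)=0$. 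Then $g|_W$ is skew, homogeneous of the same degree $\zl$ and parity $\zs$, of rank $r-1$, and we apply the inductive hypothesis to $(W,g|_W)$, prepending $y$ to the self‑paired vectors. \emph{Case 2 (no such $y$, so $g(e,e)=0$ for all homogeneous $e$).} Then $g(u,u)=g(v,v)=0$; rescale $v$ so that $g(u,v)=1$. The plane $P=\Span\{u,v\}$ is graded, $g|_P$ has matrix $\left(\begin{smallmatrix}0&1\\ \pm1&0\end{smallmatrix}\right)$ hence rank $2$, and $W=\ker g(\cdot,u)\cap\ker g(\cdot,v)$ is a graded subspace with $V=P\oplus W$ and $g(P,W)=0$. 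Apply induction to $(W,g|_W)$, which has rank $r-2$, and prepend the hyperbolic pair $q^1:=u$, $p_1:=v$.

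Collecting the peeled vectors from all recursive steps produces a homogeneous basis $(q^i,p_i,y_j,z^k)$ of $V$ with $2a+b=r$ on which $g$ decomposes as an orthogonal sum of blocks: it vanishes on every mixed pair of basis vectors and reduces on each block to the $1\times1$ matrix $[\ze^j]$ or to the $2\times2$ hyperbolic block. It then remains to check that these data are exactly encoded by the expression $g=\sum_i(p_i)^*\we(q^i)^*+\sum_j\ze^j(y_j^*\we y_j^*)$ in the $\zG$‑graded dual basis; this is a routine sign computation verifying that, under the Deligne convention (\ref{sc2}), the right‑hand side takes the prescribed values on the chosen basis. I expect the only real friction to be this final sign bookkeeping together with the grading housekeeping inside Cases 1 and 2 — confirming that each restriction $g|_W$ keeps parity $\zs$ and a well‑defined degree and that the rank drops by exactly $1$ or $2$ — the rest being the standard symplectic Gram–Schmidt reduction.
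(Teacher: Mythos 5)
Your proposal is correct and follows essentially the same route as the paper: an induction on $\dim V$ that peels off either a hyperbolic pair or a self-paired odd vector, checks that the relevant orthogonal complement is a graded subspace, and derives the parity/degree constraints on $p_i,q^i,y_j$ from homogeneity and graded skew-symmetry of $g$. The only (cosmetic) difference is that the paper first splits off the kernel $V_0=\Ker(g^\sharp)$ and runs Gram--Schmidt on a graded complement, while you absorb the radical into the $g=0$ base case.
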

\begin{proof}
This is a graded super-version of a well-known theorem in elementary linear algebra. We will use a modified Gram-Schmidt process to bring $g$ into the desired form by induction with respect to $n$.
For $n=1$ the theorem is trivial, so suppose the theorem is true for $\dim{V}<n$ and take $V$ such that $\dim(V)=n$.

\medskip\noindent
Let $V_0=\Ker(g^\sharp)$ be the kernel of $g^\sharp:V\to V^*$. Since $g$ is homogeneous, it is easy to see that $V_0$ is a graded vector subspace of $V$, so we can choose a complementary vector subspace $V'$ spanned by homogeneous vectors, say $e^1,\dots,e^k$. There are two possibilities:
\begin{enumerate}
\item $g(e^1,e^1)=0$\,,
\item $g(e^1,e^1)\ne 0$\,.
\end{enumerate}
\textbf{1.} In the first case we can find $e^i\in V'$, $i\ne 1$, such that $g(e^1,e^i)=c\ne 0$; otherwise $g=0$ and the theorem is trivial. We can permute the homogeneous base such that $e^i=e^2$. Moreover, multiplying $e^2$ by $1/c$, we can assume that $g(e^1,e^2)=1$. It is also clear that $\deg(e^1)=-\zl-\deg(e^2)$. Denote $e^1$ by $p_1$ and $e^2$ by $q^1$. Let $V_1$ be the `orthogonal complement' of $\on{span}\la p_1,q^1\ran$ in $V'$ with respect to the non-degenerate bilinear form $g\,\big|_{V'}$, i.e., $x\in V'$ is in $V_1$ if and only if $g(p_1,x)=0$ and $g(q^1,x)=0$. Since $g$ is homogeneous, $V_1$ is a homogeneous (graded) subspace of $V'$, and as a homogeneous basis of $V_1$ we can take
$$\big(f^j=e^j-g(e^1,e^j)e^2+(-1)^{(\zs+\zs_a)\cdot(\zs+\zs_b)}\,g(e^2,e^j)e^1\big)_{(k\ge j>2)}\,.$$
The vectors $f^j$ are indeed homogeneous, $\deg(f^j)=\deg(e^j)$, since $g(e^1,e^j)e^2$ and $g(e^2,e^j)e^1$ are of degree $\deg(e^j)$.
Suppose that $g(e^1,e^j)$ is non-zero. This is possible only if $g^{1j}\ne 0$, i.e., $$\zl+\deg(e^1)+\deg(e^j)=\zl-\zl-\deg(e^2)+\deg(e^j)=0\,.$$
Hence, $\deg\big(g(e^1,e^j)e^2\big)=\deg(e^2)=\deg(e^j)$. Similarly we prove $\deg\big(g(e^2,e^j)e^1\big)=\deg(e^j)$, thus  $f^j$ are homogeneous for $k\ge j>2$.

\mn In the basis dual to the homogeneous basis $p_1,q^1,f^3,\dots,f^n$ of $V$, the bilinear form $g$ reads
$$g=(p_1)^*\we(q^1)^*+g^1\,,$$
where $g^1$ is the restriction of $g$ to $V_1$, thus homogeneous with degree $\zl$ and of rank $(r-2)$. As $\dim(V_1)\le n-2$, we can now apply the inductive assumption.

\mn\textbf{2.} We can find $a\in\R$ such that $g(ae^1,ae^1)=\ze_1=\pm 1$.
Put $y^1=ae^1$, which is clearly odd. We have $-2\deg(e^1)=\zl$ that implies that the form $g$ is even and of weight $w=-2w_1$. Similarly to above, we prove that the orthogonal complement
$$V_1=\{x\in V\,|\, g(e^1,x)=0\}$$
of $\la e^1\ran$ in $V'$ is a graded vector subspace of $V'$ with a homogeneous basis, say, $f^2,\dots,f^k$. In the basis dual to $y^1,f^2,\dots,f^k$, the skew bilinear form $g$ reads
$$g=\ze^1(y_1^*\we y_1^*)+g^1\,,$$
where $g^1$ is a skew-symmetric non-degenerate bilinear form  on $V_1$ of degree $\zl$. As $\dim(V_1)\le n-1$, we finish the proof, applying the inductive assumption to $(V_1,g^1)$.

\end{proof}
\subsection{The homogeneous Darboux theorem}
Let $M$ be a $\zG$-homogeneity supermanifold and let $\zw$ be a symplectic form of degree $\zl=(\zs,w)$ on $M$. By \emph{Darboux coordinates} for $(M,\zw)$ we will understand local homogeneous coordinates $(p_1,\dots,p_j,q^1,\dots, q^{j},y_1,\dots y_k)$ on $M$, with weights in $\zG$, such that
\be\label{Darboux}
\zw=\sum_{i=1}^j  \rmd p_i\we \rmd q^i  +\sum_{l=1}^k\ze^l\,(\rmd y_l\we \xd y_l)\,,
\ee
where $\ze^l=\pm 1$.
\begin{remark}
This should be compared with the Darboux theorem in non-graded symplectic supergeometry, see for example Schwarz \cite{Schwarz:1996}, proofs of which are sketched by Kostant \cite{Kostant:1977} and Shander \cite{Shander:1983}. We remark also that Khudaverdian gave a `simple' proof of the Darboux theorem for odd symplectic supermanifolds in \cite{Khudaverdian:2004}.
\end{remark}
\no To use a homogeneous variant of Moser's trick, we also need to integrate time-dependent even vector fields on supermanifolds.
\begin{lemma}\label{l2}
Let $X:\R\ti M\to\sT M$, $X(t,y)=X_t(y)$, be a smooth family of even vector fields on the supermanifold $M$ and let $m\in|M|$.
Then, there is an open submanifold $U\subset M$ around $m\in|U|$, and a one-parameter family of smooth maps $\zF:[0,1]\ti U\to M$, $\zF(t,y)=\zF_t(y)$, which is the flow of the time-dependent vector field $X$, i.e., $\zF_t:U\to M$ are local diffeomorphisms for $t\in[0,1]$, $\zF_0=\id$, and
\be\label{e8}\frac{\xd}{\xd t}\zF_t(y)=X_t\big(\zF_t(y)\big)\,.\ee
If $M$ is additionally a homogeneity supermanifold and $X$ is of weight 0, then $\zF_t$ are morphisms of homogeneity supermanifolds.
\end{lemma}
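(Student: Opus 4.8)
The plan is to reduce the existence of the flow to the classical theory of ordinary differential equations, and then to deduce the homogeneity statement from the uniqueness of that flow.

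\textbf{Existence.} On $\wh M=\R\ti M$, with $t$ the standard coordinate on $\R$, I would consider the \emph{suspended} vector field $\wh X=\pa_t+X$, which at a point $(t,y)$ equals $\pa_t+X_t(y)$, the summand $X_t(y)$ being tangent to $\{t\}\ti M$. Since each $X_t$ and $\pa_t$ are even, $\wh X$ is an even vector field on $\wh M$, so by the flow theorem for even vector fields on supermanifolds (see Remark \ref{rw} and \cite[Chapter V]{Tuynman:1983}) it generates a local flow $\Exp(u\,\wh X)$ near $(0,m)$. Because the $\R$-component of $\wh X$ is the constant $1$, this flow has the form $\Exp(u\,\wh X)(t,y)=\big(t+u,\Psi_u(t,y)\big)$, and $\zF_t(y):=\Psi_t(0,y)$ then satisfies $\zF_0=\id$ and (\ref{e8}). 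Concretely, expanding the flow in local coordinates in the finitely many odd generators reduces (\ref{e8}) to a finite system of classical ODEs with smooth, $t$-dependent coefficients, whence Picard--Lindel\"of gives local existence and, crucially, \emph{uniqueness}. Shrinking $U$ around $m$, and using that the trajectory $t\mapsto\zF_t(m)$ of the body point is defined on $[0,1]$ (in the applications one usually has $X_t(m)=0$, so this trajectory is even constant), one obtains $\zF$ on $[0,1]\ti U$.

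\textbf{Homogeneity.} Now assume $M$ carries a weight vector field $\n_M$ and each $X_t$ has weight $0$, i.e. $\Ll_{\n_M}X_t=[\n_M,X_t]=0$. Let $g_s=\Exp(s\,\n_M)$ be the local flow of $\n_M$ (Remark \ref{rw}); it is a local one-parameter group, $g_s\circ g_u=g_{s+u}$. For $s$ near $0$ and $t\in[0,1]$, on a small enough neighbourhood of $m$, set $G_{s,t}:=g_{-s}\circ\zF_t\circ g_s$. Differentiating in $t$, using (\ref{e8}) and the identity $(g_{-s})_*X_t=\sT g_{-s}\circ X_t\circ g_s$, gives
$$\frac{\xd}{\xd t}G_{s,t}=\big((g_{-s})_*X_t\big)\circ G_{s,t}\,.$$
Since $[\n_M,X_t]=0$ and $g_{-s}$ belongs to the flow of $\n_M$, one has $(g_{-s})_*X_t=X_t$; as moreover $G_{s,0}=g_{-s}\circ g_s=\id$, the path $t\mapsto G_{s,t}$ solves the same initial value problem as $\zF_t$. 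By the uniqueness of flows of even vector fields, $G_{s,t}=\zF_t$, i.e. $\zF_t\circ g_s=g_s\circ\zF_t$. Differentiating in $s$ at $s=0$ yields $\sT\zF_t\circ\n_M=\n_M\circ\zF_t$, that is, $\zF_t$ relates $\n_M$ to $\n_M$ and is therefore a morphism of homogeneity supermanifolds.

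I expect the main obstacle to be the first part: phrasing ``flow of an even (time-dependent) vector field on a supermanifold'' carefully, in particular establishing its \emph{uniqueness}, on which the whole homogeneity argument depends. This is by now standard --- either through the odd-variable expansion, or through the functor-of-points description of $\sT M$ recalled in Remark \ref{rw} --- but it must be invoked with care, together with the minor point of shrinking $U$ to keep the flow defined on the full interval $[0,1]$. Once existence and uniqueness are secured, the homogeneity statement is the short commuting-flows computation above.
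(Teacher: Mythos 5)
Your proof is correct and follows essentially the same route as the paper: both pass to the suspended even vector field $\pa_t+X_t$ on $\R\ti M$ and invoke the flow theorem for even vector fields on supermanifolds to produce $\zF_t$. Your explicit commuting-flows argument for the homogeneity statement is a welcome elaboration of what the paper dismisses as ``the rest is obvious,'' and your parenthetical caveat about the trajectory through $m$ being defined on all of $[0,1]$ is, if anything, slightly more careful than the paper's bare ``choosing $|U|$ sufficiently small.''
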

\begin{proof}
Consider the supermanifold $\tM=\R\ti M$ with the vector field $\wt X(s,y)=\pa_s+X_s(y)$. This vector field is even and nowhere vanishing, $\wt X(s,m)\ne 0$, so it can be integrated to a local flow $\zf_t$ on $\tM$ \cite[Chapter V]{Tuynman:1983} (see also Remark \ref{rw}). This easily follows also from a result by Shander \cite[Theorem 1]{Shander:1980} who proved that any even vector field $Y$ on a supermanifold $N$, with $Y(n)\ne 0$, $n\in|N|$, can be locally written as $Y=\pa_{x^1}$ in a coordinate neighbourhood $(U,(x^i))$, $m\in|U|$.

Choosing $|U|$ sufficiently small, we may assure that the flow of local diffeomorphisms is defined for $0\le t\le 1$. Such a flow $\zf_t$ for the vector field $\wt X$ looks locally like $\zf_t(s,y)=(t+s,\zF(t,y))$, $\zf_0=\id$, and it is easy to see that $\zF(t,y)$ satisfies equation (\ref{e8}). The rest is obvious.
\end{proof}
\no Actually, we can consider the time-dependent vector field $X$ as defined only on a neighbourhood of $\{ 0\}\ti M$ in $\tM$, with no real changes in the proof. Completely analogously to the classical (purely even) situation, one proves the following.

\begin{proposition}\label{prx}
If $\zF_t$ is the local flow of a time-dependent even vector field $X_t$ on a supermanifold $M$, and $K_t$ is a time-dependent tensor field on $M$, then
\be\frac{\xd}{\xd t}\zF_t^\star(K_t)=(\zF_t)^\star\left(\Ll_{X_t}K_t+\dot K_t\right)\,,\ee
where $\zF_t^\star$ denotes the action of the local diffeomorphism on tensor fields (the pullback).
\end{proposition}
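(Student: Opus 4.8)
The plan is to follow the classical (purely even) proof of this formula essentially verbatim; what makes it work in the super setting is that every ingredient it uses is already available. The local flow $\zF_t$ of the time-dependent even field $X_t$ is provided by Lemma~\ref{l2}; the Lie derivative $\Ll_Y$ is, by Remark~\ref{rw}, exactly the infinitesimal generator $\left.\frac{\xd}{\xd s}\right|_{s=0}\Exp(sY)^\star$ of the pullback action of the one-parameter group $\Exp(sY)$, defined on arbitrary (not necessarily homogeneous or even) tensor fields; and the pullback $\zf^\star$ along a local diffeomorphism of a supermanifold is a homomorphism of the tensor algebra commuting with contractions, hence is determined by its action on superfunctions, $1$-forms and vector fields. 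Thus both sides of the asserted identity make sense for any time-dependent tensor field $K_t$.

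First I would reparametrize in time: for fixed $t$, let $\eta_{t,h}$ denote the flow of $X$ from time $t$ to time $t+h$, so that $\eta_{t,0}=\id$ and $\left.\frac{\xd}{\xd h}\right|_{h=0}\eta_{t,h}=X_t$, whence $\left.\frac{\xd}{\xd h}\right|_{h=0}\eta_{t,h}^\star(L)=\Ll_{X_t}(L)$ for every tensor field $L$ by Remark~\ref{rw}. Uniqueness of solutions of the flow equation (\ref{e8}) gives the cocycle identity $\zF_{t+h}=\eta_{t,h}\circ\zF_t$, so
$$\zF_{t+h}^\star(K_{t+h})=\zF_t^\star\bigl(\eta_{t,h}^\star(K_{t+h})\bigr)\,.$$
Differentiating at $h=0$ and pulling the fixed linear map $\zF_t^\star$ through the $h$-derivative, it remains to compute $\left.\frac{\xd}{\xd h}\right|_{h=0}\eta_{t,h}^\star(K_{t+h})$. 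Regarding this as depending on the two occurrences of $h$ separately and invoking the Leibniz rule, it splits as $\left.\frac{\xd}{\xd h}\right|_{h=0}\eta_{t,h}^\star(K_t)+\left.\frac{\xd}{\xd h}\right|_{h=0}\eta_{t,0}^\star(K_{t+h})=\Ll_{X_t}(K_t)+\dot K_t$, which gives the claim.

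The main work, and the only genuinely delicate part, is supergeometric bookkeeping rather than anything conceptual: I would need to check that $(h,\cdot)\mapsto\eta_{t,h}^\star(K_{t+h})$ is smooth in the functor-of-points sense, which follows from smooth dependence of the flow on parameters exactly as in the proof of Lemma~\ref{l2}, and to legitimize the two-variable Leibniz step. The cleanest way to do the latter is to reduce, using multiplicativity of $\eta_{t,h}^\star$ and $\zF_t^\star$ over tensor products together with their compatibility with contractions, to the three generating cases $K=f$ (a superfunction), $K=\za$ (a $1$-form) and $K=Y$ (a vector field); in adapted local coordinates $(x^a)$ these reduce to elementary computations with $\frac{\xd}{\xd t}\zF_t=X_t\circ\zF_t$ and the coordinate formula (\ref{Ll}) for $\Ll_X$. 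Once these cases are settled the identity propagates to all tensor fields by $\cA(M)$-multilinearity, finishing the proof.
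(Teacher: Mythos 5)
Your argument is correct and is exactly the classical proof (cocycle identity for the flow, two-variable Leibniz split, reduction to generators) that the paper itself invokes by simply stating the result follows ``completely analogously to the classical (purely even) situation'' without writing out details. You have filled in precisely the intended argument, so there is nothing to add.
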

\begin{theorem}[\textbf{Homogeneous Darboux Theorem}]\label{thm:Darboux}
 Any  symplectic $\zG$-homogeneity supermanifold $(M,\zw)$ with symplectic form $\zw$ of degree $\zl=(\zs,w)$ admits local Darboux coordinates around any $m\in|M|$ (see \eqref{Darboux}). The coordinates $y_l$ can appear only if $\zw$ is even.
\end{theorem}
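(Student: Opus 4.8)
The plan is to prove the theorem by a homogeneous version of Moser's trick, reducing the global statement to a purely algebraic normal-form statement on the tangent space together with the homogeneous Poincar\'e Lemma. First I would fix $m\in|M|$ and use Theorem \ref{t2} to obtain a homogeneous basis of $\sT_mM$ in which $\zw(m)$ has the model form $\sum_i (p_i)^*\we(q^i)^*+\sum_l\ze^l\,(y_l^*\we y_l^*)$; since $\zw$ is symplectic, there is no kernel, so $2a+b=n$ and the $z^k$ do not appear. Using Proposition \ref{Sh} and Corollary \ref{cor}, I would then pick local homogeneous coordinates $(p_i,q^i,y_l)$ centred appropriately at $m$ whose differentials at $m$ realise this dual basis. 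Call $\zw_0$ the constant-coefficient $2$-form
$$\zw_0=\sum_{i=1}^a\xd p_i\we\xd q^i+\sum_{l=1}^b\ze^l\,(\xd y_l\we\xd y_l)$$
written in these coordinates; it is closed, homogeneous of the same degree $\zl=(\zs,w)$ as $\zw$ (this is exactly why Theorem \ref{t2} forces $\deg p_i=\zl-\deg q^i$ and $2\deg y_l=(0,w)$, so that $\zw_0$ really does have weight $w$), and it agrees with $\zw$ at $m$.

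Next I would interpolate: set $\zw_t=\zw_0+t(\zw-\zw_0)$ for $t\in[0,1]$. Each $\zw_t$ is closed and homogeneous of degree $\zl$, and $\zw_t(m)=\zw(m)$ is nondegenerate; by continuity (Lemma \ref{l1} applied to the matrix $[(\zw_t)_{ab}]$, whose reduction at points near $m$ is invertible for $t\in[0,1]$ on a small enough neighbourhood) each $\zw_t$ is symplectic on a neighbourhood $U$ of $m$ independent of $t$. The difference $\zw-\zw_0$ is a closed homogeneous $2$-form of degree $\zl$ vanishing at $m$; apply the homogeneous Poincar\'e Lemma, Lemma \ref{Mt}. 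In the case $\nabla_M(m)=0$ part~1 gives a homogeneous $1$-form $\zb$ of degree $\zl$ with $\zw-\zw_0=\xd\zb$ and $\zb(m)=0$. In the case $\nabla_M(m)\ne 0$, part~2 gives such a $\zb$ when $n>1$ (which it is, symplectic forms being of even rank $\ge 2$); I should remark that for $n>1$ part~2 still yields $\zb(m)=0$, which is what is needed. Define the time-dependent even vector field $X_t$ by $i_{X_t}\zw_t=-\zb$, i.e. $X_t=-(\zw_t^\sharp)^{-1}(\zb)$ using the isomorphism $\zw_t^\sharp$ of Theorem \ref{t2}/the surrounding discussion; since $\zw_t$ and $\zb$ are homogeneous of degree $\zl$ and $\zl$ respectively, $X_t$ is homogeneous of weight $0$, and since $\zb(m)=0$ we get $X_t(m)=0$ for all $t$.

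Now integrate $X_t$ by Lemma \ref{l2}: shrinking $U$, there is a flow $\zF_t:U\to M$ with $\zF_0=\id$, $\zF_t(m)=m$ (as $X_t(m)=0$), and because $X_t$ has weight $0$, each $\zF_t$ is a morphism of homogeneity supermanifolds. By Proposition \ref{prx},
$$\frac{\xd}{\xd t}\zF_t^\star(\zw_t)=\zF_t^\star\bigl(\Ll_{X_t}\zw_t+\dot\zw_t\bigr)
=\zF_t^\star\bigl(\xd\,i_{X_t}\zw_t+i_{X_t}\,\xd\zw_t+(\zw-\zw_0)\bigr)
=\zF_t^\star\bigl(-\xd\zb+0+\xd\zb\bigr)=0\,,$$
using $\xd\zw_t=0$. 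Hence $\zF_1^\star(\zw)=\zF_1^\star(\zw_1)=\zF_0^\star(\zw_0)=\zw_0$. Pulling back the coordinates $(p_i,q^i,y_l)$ along $\zF_1$ (an automorphism of the homogeneity structure near $m$) gives new homogeneous coordinates of the same weights in $\zG$ in which $\zw$ takes the Darboux form \eqref{Darboux}, and the $y_l$ occur only when $\zw$ is even by the last clause of Theorem \ref{t2}. The main obstacle I anticipate is bookkeeping the degrees so that $\zw_0$ genuinely has the prescribed degree $\zl$ and that $X_t$ comes out of weight exactly $0$ (so that $\zF_t$ preserves homogeneity); this is precisely handled by the degree constraints in Theorem \ref{t2}, but it must be checked carefully, together with the subtle point of which branch of Lemma \ref{Mt} is invoked when $\nabla_M(m)\ne0$ and the verification that $\zb$ can still be taken to vanish at $m$ there.
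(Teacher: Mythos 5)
Your proposal is correct and follows essentially the same route as the paper: the linear normal form of Theorem \ref{t2} at $m$, the interpolation $\zw_t$, a primitive of $\zw-\zw_0$ from the homogeneous Poincar\'e Lemma \ref{Mt} vanishing at $m$, and the weight-$0$ Moser vector field $X_t$ integrated via Lemma \ref{l2} and Proposition \ref{prx}. Your explicit check of which branch of Lemma \ref{Mt} applies when $\nabla_M(m)\ne 0$ (noting that for a $2$-form one is in the case $n=2>1$ and the primitive still vanishes at $m$) is a point the paper glosses over, and it is verified by the paper's own proof of that lemma.
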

\begin{remark}
That the Darboux coordinates have weights in $\zG$ puts some constraints on possible $\zl$ namely $\deg(\bar p_i)+\deg(\bar q^i)=\zl$  and $2\deg(\bar y^j)=\zl$. Note also that our Darboux coordinates need not be allowed if $\nabla_M(m)\ne 0$ and an additional structure determining allowance is involved.
\end{remark}
\begin{proof}
We will adapt Moser's trick for homogeneity supermanifolds.
Let $U$ be a homogeneity superdomain in $M$, with $\zG$-homogeneous coordinates $(x^a)$, and let $m\in|U|$.
According to Theorem \ref{t2}, we can find a coordinate neighbourhood $V\subset U$, such that $m\in|V|$, with homogeneous coordinates
$$(\bar p_1,\dots,\bar p_a,\bar q^1,\dots,\bar q^a,\bar y_1,\dots,\bar y_b)$$
having weights in $\zG$, such that $\zw(m)$ is of the form
\be\label{e6}
\zw(m)=\sum_i \xd\bar p_i(m)\we\xd\bar q^i(m)+\sum_j\ze^j\big(\xd \bar y_j(m)\we\xd\bar y_j(m)\big)\,,
\ee
where $\ze_j=\pm 1$. It is clear that $\deg(\bar p_i)+\deg(\bar q^i)=\zl$  and $2\deg(\bar y^j)=\zl$. Let us consider a two-form on $V$ which reads
\be\label{e7}
\zw_0=\sum_i \xd\bar p_i\we\xd\bar q^i+\sum_j\ze^j(\xd \bar y_j\we\xd\bar y_j)\,.
\ee
Obviously, it is a symplectic form of degree $\zl$ on $V$. Put $\zw_t=(1-t)\zw_0+t\zw$. Of course, $\zw_t$ is a closed form of degree $\zl$. Since $\zw_t(m)=\zw(m)$ is non-degenerate, we can choose $V$ so small that $\zw_t$ is non-degenerate in $V$, thus symplectic, for $t\in[0,1]$.

The form $\zw-\zw_0$ is a closed 2-form of degree $\zl$, so by homogeneous Poincar\'e Lemma \ref{Mt}, for a neighbourhood $V$ of $m$ we have $\zw-\zw_0=\xd\za$ for a one-form $\za$ of degree $\zl$ and vanishing at $m$. Since the two-forms $\zw_t$ are symplectic, there is a time-dependent vector field $X_t$, such that $i_{X_t}\zw_t=-\za$. The vector field $X_t$ is of degree 0, thus even. Since $\za(m)=0$, also $X_t(m)=0$, and we can find a sufficiently small $V$ such that the flow $\zF_t$ of $X_t$ is defined for $0\le t\le 1$.
As the time-dependent vector field is of degree 0, the flow consists of morphisms of $\zG$-homogeneity manifolds.
Like in the classical situation, we have (cf. Proposition \ref{prx})
\be\label{e9}\frac{\xd}{\xd t}\zF_t^*(\zw_t)=(\zF_t)^*\left(\Ll_{X_t}\zw_t+\frac{\xd}{\xd t}\zw_t\right)\,,\ee
where $\Ll_{X_t}=\xd\circ i_{X_t}+i_{X_t}\circ\xd$ is the Lie derivative along $X_t$.
But
$$\Ll_{X_t}\zw_t+\frac{\xd}{\xd t}\zw_t=\xd(i_{X_t}\zw_t)+\zw-\zw_0=-\xd\za+\zw-\zw_0=0\,.$$
Therefore, the form $\zF_t^*(\zw_t)$ is independent on $t$ and
$$\zF_1^*(\zw_1)=\zF_1^*(\zw)=\zF_0^*(\zw_0)=\zw_0\,.$$
This means that $\zw=\big((\zF_1)^{-1}\big)^*(\zw_0)$, and that $\zw$ in coordinates
$$p_i=\bar p_i\circ(\zF_1)^{-1}\,,\ q_i=\bar q^i\circ(\zF_1)^{-1}\,,\ y^j=\bar y^j\circ(\zF_1)^{-1}$$
reads as in (\ref{Darboux}). Since $\zF_1$, coming from the flow of a vector field of degree 0, preserves homogeneity degrees, the new coordinates are also homogeneous with degrees in $\zG$.

\end{proof}
\begin{remark}
For N-manifolds, the Darboux coordinates mentioned in Theorem \ref{thm:Darboux} are also N-homogeneous with weights determining the parity. Hence, we have a homogeneous Darboux Theorem for N-manifolds.
\end{remark}
\begin{corollary}
If $\zt:M\to B$ is a positive homogeneity vector superbundle of rank $(k|l)$ and $m\in|B|$, then as the Darboux coordinates in Theorem \ref{thm:Darboux} we can take allowed coordinates, so that (\ref{Darboux}) is valid on $U\ti \R^{k|l}$, where $U$ is a neighbourhood of $m$ in $B$.
\end{corollary}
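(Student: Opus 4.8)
The plan is to reduce everything to the Homogeneous Darboux Theorem (Theorem~\ref{thm:Darboux}) and then use the extra rigidity supplied by positivity. First I would unwind the hypothesis. For a positive homogeneity vector superbundle $\zt\colon M\to B$ of type $\zm=(w_1,\dots,w_k,v_1,\dots,v_l)$ with all $w_a,v_j>0$, over a trivialising tube $\zt^{-1}(U)\cong U\ti\R^{k|l}(\zm)$ the weight vector field is the fibrewise $\zm$-Euler field $\n_M=\sum_a w_a\,y^a\pa_{y^a}+\sum_j v_j\,\zh^j\pa_{\zh^j}$, so $\n_M$ vanishes on the zero section; in particular $\n_M(m)=0$ for $m\in|B|$, the favourable case of Theorem~\ref{thm:Darboux}. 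The decisive feature of positivity, though, is that the scaling action $h_t$ of \eqref{st} is defined on the \emph{whole} tube $\zt^{-1}(U)$ for every $t>0$ (cf.\ the proof of Proposition~\ref{pos}), each $h_t$ being an automorphism of the homogeneity superbundle with $h_t^*\zw=t^w\zw$ for a $w$-homogeneous form $\zw$.

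Next I would apply Theorem~\ref{thm:Darboux} to the symplectic form $\zw$ of degree $\zl=(\zs,w)$, obtaining on some (a priori small) neighbourhood $V$ of $m$ in $M$ homogeneous coordinates $(\bar p_i,\bar q^i,\bar y_j)$ with weights in $\zG$ which put $\zw$ into the normal form \eqref{Darboux}, with $\deg\bar p_i+\deg\bar q^i=\zl$ and $2\deg\bar y_j=\zl$. Since $V$ is open and contains $m\in|B|$, it contains a neighbourhood of the zero section over some smaller $U$, hence $h_{t_0}\big(\zt^{-1}(U)\big)\subset V$ for $t_0\in(0,1)$ small, because $h_{t_0}$ contracts the positively weighted fibres. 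I would then transport the normal form along $h_{t_0}$: from $h_{t_0}^*\zw=t_0^{w}\zw$ one gets, on all of $\zt^{-1}(U)=U\ti\R^{k|l}$,
\[
\zw=\sum_i\xd p_i\we\xd q^i+\sum_j\ze^j\,\xd y_j\we\xd y_j,\qquad
p_i=t_0^{-w}\,\bar p_i\circ h_{t_0},\quad q^i=\bar q^i\circ h_{t_0},\quad y_j=t_0^{-w/2}\,\bar y_j\circ h_{t_0}.
\]
Because $(\bar p_i,\bar q^i,\bar y_j)$ is a chart on $V$ and $h_{t_0}$ is a diffeomorphism of $\zt^{-1}(U)$ onto an open submanifold of $V$, the functions $(p_i,q^i,y_j)$ form a coordinate system on the tube; and as $h_{t_0}$ preserves $\n_M$, they carry the same degrees as the barred ones, so their weights lie in $\zG$ and the required degree relations persist. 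Finally, by Proposition~\ref{pos}(1) applied to the positive tube (fibre coordinates of positive weight, base coordinates of weight $0$), every homogeneous function on $\zt^{-1}(U)$ is polynomial along the fibres with coefficients in the base variables; hence $(p_i,q^i,y_j)$ are allowed coordinates of $M$ and \eqref{Darboux} holds on the entire tube $U\ti\R^{k|l}$, as claimed. The statement for N-manifolds then follows since there the $\zG$-weights additionally fix the parities.

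I expect the only genuinely non-formal point to be the transport step: one must check that the $h_{t_0}$-pullbacks really assemble into a coordinate chart over the \emph{full} tube, not merely on $h_{t_0}(\zt^{-1}(U))$, and that the constant rescalings $t_0^{-w},t_0^{-w/2}$ (well-defined since $t_0>0$) exactly restore the $\ze^j=\pm1$ and unit coefficients. This is precisely where completeness of the scaling action — i.e.\ positivity of $\zm$ — enters, and it is what fails in the general case flagged in the remark after Theorem~\ref{thm:Darboux}. The rest — the degree bookkeeping and the identification of the resulting coordinates as allowed via Proposition~\ref{pos} — is routine; note also that for a positive bundle ``allowed'' already means polynomial (not necessarily linear) along fibres, so no further normalisation of the $p_i$ is needed.
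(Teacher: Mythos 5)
Your overall strategy (contract the fibre into the local Darboux chart with the scaling action and pull the normal form back) is a reasonable instinct, but it founders on one concrete false step: the containment $h_{t_0}\big(\zt^{-1}(U)\big)\subset V$. For $t_0>0$ the map $h_{t_0}(x,y^a,\zh^j)=(x,t_0^{w_a}y^a,t_0^{v_j}\zh^j)$ is a linear \emph{bijection} of each fibre $\R^{k|l}$ onto itself, so $h_{t_0}\big(\zt^{-1}(U)\big)=\zt^{-1}(U)$ for every $t_0>0$; no fixed $t_0$ squeezes the whole (non-compact) fibre into a bounded neighbourhood $V$ of $m$. The contraction you invoke is uniform only on compacta. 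Consequently the functions $t_0^{-w}\,\bar p_i\circ h_{t_0}$, etc., are defined only on $h_{t_0}^{-1}(V)$, and these domains exhaust the tube only as $t_0\to 0$, with no single value covering it. Your normalisation also prevents gluing over varying $t_0$: with $p_i=t_0^{-w}\,\bar p_i\circ h_{t_0}$ and $q^i=\bar q^i\circ h_{t_0}$ one finds, on $V\cap h_{t_0}^{-1}(V)$, that $p_i=t_0^{\,u_i-w}\,\bar p_i$, where $u_i$ is the weight of $\bar p_i$; this depends on $t_0$ unless $u_i=w$, so the charts for different $t_0$ disagree on overlaps and cannot be assembled into one chart on the tube.

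The argument can be repaired: rescale each coordinate by its \emph{own} weight, $p_i=t^{-u_i}\,\bar p_i\circ h_t$, so that homogeneity of $\bar p_i$ gives $p_i=\bar p_i$ on the overlap with $V$, the definitions for different $t$ glue, and since $u_i+u'_i=w$ (with $u'_i$ the weight of $\bar q^i$) the two-form still comes out as in (\ref{Darboux}). But at that point you have re-derived, in flow language, what the paper does in one line: by Proposition~\ref{pos}(1) every homogeneous function on a positive tube is a polynomial in the fibre variables with coefficients depending on the base variables, so the homogeneous Darboux coordinates produced by Theorem~\ref{thm:Darboux} near $m$ admit a unique polynomial extension to the whole fibres over a neighbourhood $U$ of $m$ in $B$, and the identity (\ref{Darboux}) between fibrewise-polynomial coefficients propagates to all of $U\ti\R^{k|l}$ by uniqueness of that extension. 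Your closing appeal to Proposition~\ref{pos} is thus the real engine of the proof; the $h_{t_0}$ transport, as written, contains a genuine gap.
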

\begin{proof}
As (\ref{Darboux}) is valid in a neighbourhood of $m$ in $M$ and homogeneous functions are polynomial, there is a unique (polynomial) extension of (\ref{Darboux}) to fibers of $M$.

\end{proof}
\section{Conclusions}
We have introduced and systematically studied the concept of homogeneity structures on supermanifolds, which capture the essence of grading and generalize several notions of graded manifolds found in the literature. In particular, homogeneity structures provide an elegant and effective tool for working with vector superbundles, offering a perspective that is conceptually more geometric than approaches based on locally free sheaves. In order to retain the full apparatus of differential calculus, we do not start from ringed spaces and sheaves of graded algebras, but instead work with genuine supermanifolds endowed with an additional geometric structure, namely, a weight vector field.

Of course, the corresponding sheaf of algebras of homogeneous functions can be recovered as a subsheaf of the standard structural sheaf of the supermanifold. However, the grading of this sheaf may be an $\R$-grading, which does not induce a direct sum decomposition of the algebra. Within our framework, we introduce and study homogeneous tensor fields, homogeneous submanifolds, and homogeneous distributions, as well as homogeneity Lie supergroups, illustrating all concepts with explicit examples.

We also investigate double homogeneity supermanifolds, with homogeneity vector superbundles as a particular case, as well as tangent and cotangent lifts of homogeneity structures, which produce new homogeneity supermanifolds from existing ones. We provide a natural definition of homogeneous distributions. The main results of the paper, which are of fundamental importance, are the homogeneous Poincar\'e Lemma and the homogeneous Frobenius Theorem. As an application, we study homogeneous symplectic forms and prove a homogeneous version of the symplectic Darboux Theorem.

\medskip\noindent
This paper represents only an initial step in a broader investigation of homogeneity structures in differential geometry, and many directions remain to be explored. In particular, one may introduce homogeneous structures on homogeneity supermanifolds for a much wider class of geometric objects than those considered here, including homogeneous principal bundles, homogeneous connections, homogeneous Riemannian manifolds, and homogeneous contact structures. Another promising direction is the development of a homogeneous analogue of Darboux’s classification of Pfaffian forms. We plan to address some of these topics in future work.

\small{\vskip1cm}

\noindent Katarzyna GRABOWSKA\\
Faculty of Physics, University of Warsaw\\
ul. Pasteura 5, 02-093 Warszawa, Poland
\\Email: konieczn@fuw.edu.pl\\
https://orcid.org/0000-0003-2805-1849\\

\noindent Janusz GRABOWSKI\\ Institute of
Mathematics,  Polish Academy of Sciences\\ ul. \'Sniadeckich 8, 00-656 Warszawa, Poland
\\Email: jagrab@impan.pl\\
https://orcid.org/0000-0001-8715-2370

\end{document}